\newcommand{\+}{\nobreakdash-}
\renewcommand{\:}{\colon}
\renewcommand{\.}{\mskip.5\thinmuskip}
\DeclareMathOperator{\Hom}{Hom}
\DeclareMathOperator{\Ext}{Ext}
\DeclareMathOperator{\Tor}{Tor}
\DeclareMathOperator{\Spec}{Spec}
\DeclareMathOperator{\coker}{coker}
\DeclareMathOperator{\Id}{Id}
\DeclareMathOperator{\id}{id}
\DeclareMathOperator{\im}{im}
\DeclareMathOperator{\pd}{pd}
\newcommand{\ot}{\otimes}
\newcommand{\rarrow}{\longrightarrow}
\newcommand{\larrow}{\longleftarrow}
\newcommand{\tim}{\rightthreetimes}
\newcommand{\lrarrow}{\.\relbar\joinrel\relbar\joinrel\rightarrow\.}
\newcommand{\bu}{{\text{\smaller\smaller$\scriptstyle\bullet$}}}
\newcommand{\sA}{\mathsf A}
\newcommand{\sB}{\mathsf B}
\newcommand{\sC}{\mathsf C}
\newcommand{\sD}{\mathsf D}
\newcommand{\sE}{\mathsf E}
\newcommand{\sF}{\mathsf F}
\newcommand{\alg}{{\operatorname{\mathsf{--alg}}}}
\newcommand{\modl}{{\operatorname{\mathsf{--mod}}}}
\newcommand{\modr}{{\operatorname{\mathsf{mod--}}}}
\newcommand{\contra}{{\operatorname{\mathsf{--contra}}}}
\newcommand{\qcoh}{{\operatorname{\mathsf{--qcoh}}}}
\newcommand{\ctrh}{{\operatorname{\mathsf{--ctrh}}}}
\newcommand{\ctra}{{\operatorname{\mathsf{-ctra}}}}
\newcommand{\adj}{{\operatorname{\mathsf{-adj}}}}
\renewcommand{\b}{{\mathsf{b}}}
\newcommand{\proj}{{\mathsf{proj}}}
\newcommand{\Sets}{\mathsf{Sets}}
\newcommand{\Add}{\mathsf{Add}}
\newcommand{\Vir}{{\mathbb V\mathrm{ir}}}
\newcommand{\s}{{\mathbf s}}
\newcommand{\rop}{{\mathrm{op}}}
\newcommand{\gr}{{\mathrm{gr}}}
\newcommand{\pr}{{\mathrm{pr}}}
\newcommand{\boL}{\mathbb L}
\newcommand{\boR}{\mathbb R}
\newcommand{\boT}{\mathbb T}
\newcommand{\boZ}{\mathbb Z}
\newcommand{\A}{\mathfrak A}
\newcommand{\I}{\mathfrak I}
\newcommand{\J}{\mathfrak J}
\newcommand{\K}{\mathfrak K}
\newcommand{\R}{\mathfrak R}
\renewcommand{\S}{\mathfrak S}
\newcommand{\U}{\mathfrak U}
\renewcommand{\O}{\mathcal O}
\newcommand{\cA}{\mathcal A}
\newcommand{\C}{\mathcal C}
\newcommand{\cE}{\mathcal E}
\newcommand{\cF}{\mathcal F}
\newcommand{\cJ}{\mathcal J}
\newcommand{\cP}{\mathcal P}
\newcommand{\cQ}{\mathcal Q}
\newcommand{\cR}{\mathcal R}
\newcommand{\Section}[1]{\bigskip\section{#1}\medskip}
\theoremstyle{plain}
\newtheorem{thm}{Theorem}[section]
\newtheorem{lem}[thm]{Lemma}
\newtheorem{prop}[thm]{Proposition}
\newtheorem{cor}[thm]{Corollary}
\theoremstyle{definition}
\newtheorem{ex}[thm]{Example}
\newtheorem{exs}[thm]{Examples}
\newtheorem{rem}[thm]{Remark}
\begin{document}

\title{Abelian right perpendicular subcategories \\
in module categories}
\author{Leonid Positselski}

\address{Institute of Mathematics, Czech Academy of Sciences,
\v Zitn\'a~25, 115~67 Prague~1, Czech Republic}

\email{positselski@math.cas.cz}

\begin{abstract}
 We show that an abelian category can be exactly, fully faithfully
embedded into a module category as the right perpendicular subcategory
to a set of modules or module morphisms if and only if it is a locally
presentable abelian category with a projective generator, or in other
words, the category of models of an additive algebraic theory of
possibly infinite bounded arity.
 This includes the categories of contramodules over topological rings
and other examples.
 Various versions of the definition of the right perpendicular
subcategory are considered, all leading to the same class of
abelian categories.
 We also discuss sufficient conditions under which the natural forgetful
functors from the categories of contramodules to the related
categories of modules are fully faithful.
\end{abstract}

\maketitle

\tableofcontents

\section*{Introduction}
\medskip

\subsection{{}}
 The word \emph{contramodule} roughly means ``an object of a locally
presentable abelian category with a projective generator''.
 In fact, cocomplete abelian categories with a projective generator can
be described as the categories of algebras/modules over additive
monads on the category of sets (see the discussion in the introduction
to the paper~\cite{PR} or in~\cite[Section~6]{PS}).
 Such a category is locally presentable if and only if the monad
$\boT\:\Sets\rarrow\Sets$ is accessible, i.~e., the functor~$\boT$
commutes with $\kappa$\+filtered colimits in $\Sets$ for a large enough
cardinal~$\kappa$.

 Algebras over a monad $\boT$ are sets with (possibly infinitary)
operations: for any set $X$, elements of the set $\boT(X)$ can be
interpreted as $X$\+ary operations in $\boT$\+algebras.
 That is why \emph{categories of models of $\kappa$\+ary algebraic
theories} is another name for the categories of algebras over
$\kappa$\+accessible monads on $\Sets$.
 Such a category is abelian if and only if it is additive; for this,
the monad $\boT$ needs to be additive, that is it has to contain
the operations of addition, zero element, and inverse element, and
all the other operations in $\boT$ have to be additive with respect
to the abelian group structure defined by these.
 In this case, it might be preferable to modify the terminology and
speak about ``$\boT$\+modules'' rather than ``$\boT$\+algebras''.

 So objects of a cocomplete abelian category with a projective generator
can be viewed as module-like structures with infinitary additive
operations.
 Contramodules over topological associative rings provide typical
examples.
 In fact, to any complete, separated topological ring $\R$ with
a base of neighborhoods of zero formed by open right ideals one can
assign an additive, accessible monad $\boT_\R\:\Sets\rarrow\Sets$.
 Then one defines \emph{left\/ $\R$\+contramodules} as
$\boT_\R$\+modules~\cite{Psemi,Pweak,Prev,PR,PS,Pcoun}.

\subsection{{}}
 Thus, contramodules are sets with infinitary additive operations of
the arity bounded by some cardinal.
 Hence it comes as a bit of a surprise when the natural forgetful
functors from the categories of contramodules to the related categories
of modules turn out be fully faithful.
 An infinitary additive operation is uniquely determined by its finite
aspects.
 Why should it be?
 Nevertheless, such results are known to hold in a number of situations.
 
 The observation that weakly $p$\+complete/Ext-$p$-complete abelian
groups carry a uniquely defined natural structure of $\boZ_p$\+modules
that is preserved by any group homomorphisms between them goes back
to~\cite[Lemma~4.13]{Jan}.
 The claim that the forgetful functor provides an isomorphism between
the category of $\boZ_p$\+contramodules $\boZ_p\contra$ and the full
subcategory of weakly $p$\+complete abelian groups in $\boZ\modl$ was
announced in~\cite[Remark~A.3]{Psemi}, where the definition of
a contramodule over a topological associative ring also first appeared.
 Similarly, it was mentioned in~\cite[Remark~A.1.1]{Psemi} that
contramodules over the coalgebra $\C$ dual to the algebra of formal
power series $\C\spcheck=k[[x]]$ in one variable~$x$ over a field~$k$
form a full subcategory in the category of $k[x]$\+modules; and
a generalization to the coalgebras dual to power series in several
commutative variables $\C\spcheck=k[[x_1,\dotsc,x_m]]$ was formulated.

 A more elaborated discussion of these results for the rings $k[[x]]$
and $\boZ_p$ can be found in~\cite[Section~1.6]{Prev}; and
a generalization to the adic completions of Noetherian rings with
respect to arbitrary ideals was proved in~\cite[Theorem~B.1.1]{Pweak}.
 A version for adic completions of right Noetherian associative rings
at their centrally generated ideals was obtained
in~\cite[Theorem~C.5.1]{Pcosh}.

\subsection{{}}
 In the most straightforward terms, the situation can be explained as
follows.
 One says that an \emph{$x$\+power infinite summation operation} is
defined on an abelian group $B$ if for every sequence of elements
$b_0$, $b_1$, $b_2$,~\dots~$\in B$ an element denoted formally by
$\sum_{n=0}^\infty x^nb_n\in B$ is defined in such a way that
the equations of additivity
$$
 \sum\nolimits_{n=0}^\infty x^n(b_n+c_n)=
 \sum\nolimits_{n=0}^\infty x^nb_n + \sum\nolimits_{n=0}^\infty x^nc_n
 \qquad \forall\, b_n,\,c_n\in P,
$$
contraunitality
$$
 \sum\nolimits_{n=0}^\infty x^nb_n=b_0
 \qquad\text{if\, $b_1=b_2=b_3=\dotsb=0$},
$$
and contraassociativity
$$
 \sum\nolimits_{i=0}^\infty x^i\sum\nolimits_{j=0}^\infty x^jb_{ij}
 = \sum\nolimits_{n=0}^\infty x^n\sum\nolimits_{i+j=n}b_{ij} \qquad
 \forall\,b_{ij}\in B,\quad i,\,j=0,\,1,\,2,\,\dots
$$
are satisfied.
 Then it is an elementary linear algebra exercise
(see~\cite[Section~1.6]{Prev} or~\cite[Section~3]{Pcta}) to show that
an $x$\+power infinite summation operation in an abelian group $B$
is uniquely determined by the additive operator $x\:B\rarrow B$,
$$
 xb=\sum\nolimits_{n=0}^\infty x^nb_n, \qquad
 \text{where\, $b_1=b$\, and \,$b_0=b_2=b_3=b_4=\dotsb=0$}.
$$
 Furthermore, a $\boZ[x]$\+module structure on $B$ comes from
an $x$\+power infinite summation operation if and only if
\begin{equation} \label{x-power-summation-hom-ext-vanishing-condition}
 \Hom_{\boZ[x]}(\boZ[x,x^{-1}],B)=0=\Ext^1_{\boZ[x]}(\boZ[x,x^{-1}],B).
\end{equation}

 ``An abelian group with an $x$\+power infinite summation operation''
is another name for a contramodule over the topological ring of formal
power series $\boZ[[x]]$; so the forgetful functor $\boZ[[x]]\contra
\rarrow \boZ[x]\modl$ is fully faithful, and
the conditions~\eqref{x-power-summation-hom-ext-vanishing-condition}
describe its image.
 Notice that the dual condition
$$
 \boZ[x,x^{-1}]\ot_{\boZ[x]}M=0
$$
describes the category of discrete/torsion $\boZ[[x]]$\+modules as
a full subcategory in the category of $\boZ[x]$\+modules.
 In the case of several commutative variables $x_1$,~\dots,~$x_m$,
rewriting an $[x_1,\dotsc,x_m]$\+power infinite summation operation as
the composition of $x_j$\+power infinite summations over the indices
$j=1$,~\dots,~$m$ allows to obtain a similar description of
the abelian category of $\boZ[[x_1,\dotsc,x_m]]$\+contramodules as
a full subcategory in the abelian category of
$\boZ[x_1,\dotsc,x_m]$\+modules~\cite[Section~4]{Pcta}.

\subsection{{}}
 The above-mentioned results on the possibility of recovering infinite
summation operations from their finite aspects apply to finite
collections of commutative/central variables only.
 So it was a kind of breakthrough when it was shown
in~\cite[Theorem~2.1]{Psm} that the forgetful functor from the category
of left contramodules over the coalgebra $\C$ dual to the algebra of 
formal power series $\C\spcheck=k\{\{x_1,\dotsc,x_m\}\}$ in
noncommutative variables $x_1$,~\dots,~$x_m$ into the category of left
modules over the free associative algebra $k\{x_1,\dotsc,x_m\}$ is
fully faithful.
 In other words, an $\{x_1,\dotsc,x_m\}$\+power infinite summation
operation in a $k$\+vector space/abelian group $B$ is determined by
the collection of linear/additive operators
$x_j\:B\rarrow B$, \ $1\le j\le m$.

 The following general result, going back, in some form, to
Isbell~\cite{Isb}, is discussed in the paper~\cite{PS}: for any locally
presentable abelian category $\sB$ with a projective generator, one can
choose a projective generator $Q\in\sB$ such that $\Hom_\sB(Q,{-})$ is
a fully faithful functor from $\sB$ into the category of modules over
the endomorphism ring $\Hom_\sB(Q,Q)$.
 Moreover, locally presentable abelian categories with a projective
generator can be characterized as exactly and accessibly embedded,
reflective full abelian subcategories in the categories of modules over
associative rings.
 Under the Vop\v enka principle, the ``accessibly embedded'' condition
can be dropped.

\subsection{{}}
 Two classes of examples of locally presentable abelian categories with
a projective generator are mentioned in the paper~\cite{PR}: in
addition to the categories of contramodules over topological
rings~\cite[Sections~5\+-7]{PR}, there are also right perpendicular
subcategories to sets of modules of projective dimension at most~$1$
in the module categories~\cite[Examples~4.1]{PR}.
 The fact that right perpendicular subcategories to sets/classes of
objects of projective dimension~$\le\nobreak1$ in abelian categories
are abelian was discovered by Geigle and Lenzing
in~\cite[Proposition~1.1]{GL} and, much later, discussed by the present
author in~\cite[Section~1]{Pcta}.
 The results of~\cite[Theorem~B.1.1]{Pweak}
and~\cite[Theorem~C.5.1]{Pcosh} identify the categories of contramodules
over certain topological rings with the right perpendicular
subcategories to certain (sets of) modules of projective dimension
not exceeding~$1$.

 The latter results depend on Noetherianity assumptions.
 These can be weakened to weak proregularity assumptions
(in the sense of~\cite{Sch,PSY,Pmgm}), and somewhat further, 
as we show in this paper.
 On the other hand, we demonstrate an example of the perpendicular
category to a module of projective dimension~$1$ which is not
equivalent to the category of contramodules over any topological ring
(see Example~\ref{accessible-additive-monads-examples}(6)).
 Perhaps the conclusion should be that the good class of categories
of module-like structures with infinitary operations to work with
is that of the categories of modules over additive, accessible monads
on $\Sets$, and the categories of contramodules over topological rings
are only a certain subclass.
 (The discussion of flat contramodules and contramodule approximation
sequences in~\cite[Sections~6\+-7]{PR}, \cite[Section~5]{PSl0},
\cite[Section~7]{PSl} shows this subclass to be better behaved in
some ways.)

\subsection{{}}
 As it was discussed above, the class of all categories of modules over
additive, accessible monads on $\Sets$ coincides with that of locally
presentable abelian categories with a projective generator.
 The main result of this paper can be described as stating that
the class of all right perpendicular subcategories to sets of objects
in the categories of modules over associative rings also coincides with
the class of locally presentable abelian categories with a projective
generator.
 The caveat is that one has to specify what one means by ``right
perpendicular subcategories''.

 Right perpendicular subcategories to (sets of) modules of projective
dimension at most~$1$ are not enough, and right perpendicular
subcategories to arbitrary sets of modules, as defined in~\cite{GL},
do not even need to be abelian.
 However, one can consider the maximal reasonable class of right
perpendicular subcategories in the categories of modules over
associative rings, namely, those right perpendicular subcategories to
sets of modules that happen to be abelian and exactly embedded.

 One can also consider the minimal reasonable class of right
perpendicular subcategories in modules, namely, those
right $\Ext^{0,1}$\+perpendicular subcategories which happen to
coincide with the right $\Ext^{0..\infty}$\+perpendicular subcategories
to the same set of modules.
 These are automatically abelian and exactly embedded (in fact,
it suffices that the $\Ext^{0,1}$\+perpendicular and
the $\Ext^{0..2}$\+perpendicular subcategories coincide, for
this conclusion to hold).
 Then we show that both the ``maximal'' and the ``minimal reasonable''
classes of right perpendicular subcategories in the categories of
modules over associative rings embody the same classes of abstract
abelian categories, viz., the locally presentable abelian categories
with a projective generator.

\subsection{{}}
 Actually, given that an exact embedding of a certain abelian category
$\sB$ into the category of modules over some associative $R$ ring
is shown to exist, one naturally wants more than just an arbitrary
such embedding.
 One wants the image of this fully faithful functor to be closed under
extensions in $R\modl$.
 One wants the functor $\sB\rarrow R\modl$ to induce isomorphisms on
all the $\Ext$ groups.
 In the final analysis, one wants the induced triangulated functor
between the unbounded derived categories $\sD(\sB)\rarrow\sD(R\modl)$
to be fully faithful.

 In this paper we explain that, for a locally presentable abelian
category $\sB$ with a projective generator, the task of finding a fully
faithful embedding $\sB\rarrow R\modl$ inducing an isomorphism on
the groups $\Ext^i$ for all $i\le n$ is related to the task of
representing $\sB$ as the right $\Ext^{0,1}$\+perpendicular subcategory
to a set of objects in $R\modl$ coinciding with the right
$\Ext^{0..n}$\+perpendicular subcategory to the same set of modules.
 Then we proceed to construct, for an arbitrary locally presentable
abelian category with a projective generator, a fully faithful embedding
$\sB\rarrow R\modl$ satisfying these conditions for $n=\infty$.
 In fact, we prove that the induced triangulated functor
$\sD^-(\sB)\rarrow\sD^-(R\modl)$ is fully faithful.

\subsection{{}}
 Now let us describe the content of this paper in more detail.
 In Section~\ref{additive-monads-on-sets-secn}, we discuss
the interpretation of an arbitrary cocomplete abelian category with
a projective generator as the category of modules over an additive
monad on the category of sets.
 Locally presentable abelian categories with a projective generator
correspond to accessible additive monads on $\Sets$.
 Various examples of such abelian categories and such monads are
provided, including contramodules over topological rings and
right perpendicular subcategories to morphisms of free modules.

 In Section~\ref{examples-of-fully-faithful-forgetful-secn} we discuss
the situation when the forgetful functor from the category of modules
over a monad on $\Sets$ to the category of modules over the ring of
(some) unary operations in this monad is fully faithful.
 This section consists mostly of examples, which include contramodules
over finitely centrally generated ideals $I$ in associative rings~$R$,
contramodules over the adic completions $\R$ of such rings $R$ with
respect to such ideals~$I$, contramodules over central multiplicative
subsets $S$ in associative rings~$R$, and contramodules over
the $S$\+completions $\R$ of such rings $R$ with respect to such
multiplicative subsets~$S$.
 The proofs of some of the assertions related to these examples are
postponed to Section~\ref{topological-rings-secn}, where we prove
a rather general sufficient condition for full-and-faithfulness
of the forgetful functor from the category of contramodules over
a topological ring to the category of modules.

 In Section~\ref{perpendicular-subcategories-secn}, we introduce
the notion of a right $n$\+perpendicular subcategory in an abelian
category, where $n\ge0$ is an integer or $n=\infty$.
 We also define the notion of an $n$\+good projective generator of
a locally presentable abelian category, and establish a connection
between the two.
 Every $n$\+good projective generator of a locally presentable abelian
category $\sB$ embeds it as a right $n$\+perpendicular subcategory
into a module category, but the converse is not true.
 A counterexample to this effect is given in
Section~\ref{examples-of-good-projective-generators-secn}, which also
contains the definition of a good module over an associative ring $R$
and the claim that all good $n$\+tilting $R$\+modules are good
$R$\+modules, with the references to~\cite{BMT} and~\cite{FMS,Ba,PS}.

 Section~\ref{lambda-flat-secn} contains the proof of the main result
of this paper, namely, that every locally presentable abelian category
with a projective generator can be embedded as
a right $\infty$\+perpendicular category into the category of modules
over an associative ring, because a big enough copower of its
given projective generator is good.
 The argument is based on a result of~\cite{Isb} and~\cite{PS},
claiming, in our present terminology, that a big enough copower of
a given projective generator is $0$\+good, and the theory of
$\kappa$\+flat modules over associative rings, which is developed in
the beginning of this section for this purpose.

 The final Section~\ref{nonabelian-secn} contains counterexamples of
various badly behaved, from our point of view, full subcategories in
the categories of modules over associative rings which one can
obtain by relaxing the conditions imposed in our definition of
a right $n$\+perpendicular subcategory.
 Moreover, we show that many right $\Ext^{1..n}$\+orthogonal classes
in the categories of modules, $1\le n\le\infty$, viewed as abstract
categories, can be realized as right $\Ext^{0..n+1}$\+perpendicular
subcategories in the categories of modules over appropriately
modified rings.

\subsection{{}}
 One terminological remark is in order.
 Following~\cite{AR}, we understand the terms such as
``$\kappa$\+presentable'', ``$\kappa$\+accessible'',
``$\kappa$\+generated'', etc., to mean
``$<\nobreak\kappa$\+present\-able'', ``$<\nobreak\kappa$\+accessible'',
``$<\nobreak\kappa$\+generated'', etc.
 Here $\kappa$~is a regular cardinal.
 So a module is called $\kappa$\+generated if it generated by
\emph{less than~$\kappa$} elements, and it is called
$\kappa$\+presentable if it is isomorpic to the cokernel of
a morphism of free modules with \emph{less than~$\kappa$} generators.
 The cokernel of a morphism of free modules with at most~$\lambda$
generators is called $\lambda^+$\+presentable, where $\lambda^+$
is the successor cardinal of a cardinal~$\lambda$.

\medskip
\noindent\textbf{Acknowlegdement.}
 I~am grateful to Jan \v St\!'ov\'\i\v cek, Sefi Ladkani, Luisa Fiorot,
Silvana Bazzoni, Alexander Sl\'avik, Jan Trlifaj, and
Ji\v r\'\i\ Rosick\'y for helpful conversations and comments.
 The author's research is supported by the Israel Science Foundation
grant~\#\,446/15 and by the Grant Agency of the Czech Republic
under the grant P201/12/G028.

\Section{Additive Monads on the Category of Sets}
\label{additive-monads-on-sets-secn}

 A \emph{monad on the category of sets} is a covariant functor
$\boT\:\Sets\rarrow\Sets$ endowed with the natural transformations
of \emph{multiplication} $\phi_\boT\:\boT\circ\boT\rarrow\boT$ and
\emph{unit} $\epsilon_\boT\:\Id_{\Sets}\rarrow\boT$ satisfying
the equations of \emph{associativity}
$$
 \boT\circ\boT\circ\boT\rightrightarrows\boT\circ\boT\rarrow\boT
$$
$\phi_\boT(\phi_\boT\circ\boT)=\phi_\boT(\boT\circ\phi_\boT)$
and \emph{unitality}
$$
 \boT\rightrightarrows\boT\circ\boT\rarrow\boT
$$
$\phi_\boT(\boT\circ\epsilon_\boT)=\Id_{\Sets}=
\phi_\boT(\epsilon_\boT\circ\boT)$.
 An \emph{algebra over\/~$\boT$} is a set $B$ endowed with a map
of sets $\pi_B\:\boT(B)\rarrow B$ satisfying the equations of
associativity
$$
 \boT(\boT(B))\rightrightarrows\boT(B)\rarrow B
$$
$\pi_B\circ\phi_\boT(B)=\pi_B\circ\boT(\pi_B)$
and unitality
$$
 B\rarrow\boT(B)\rarrow B
$$
$\pi_B\circ\epsilon_\boT(B)=\id_B$.
 We denote category of all algebras over a monad $\boT$ by $\boT\alg$.

 For any set $X$, elements of the set $\boT(X)$ are interpreted as
\emph{$X$\+ary operations} on $\boT$\+algebras in the following way.
 Let $B$ be a $\boT$\+algebra, $b\:X\rarrow B$ be a map of sets, and
$t\in\boT(X)$ be an element.
 Then the element $t_\boT(b)\in B$ is defined as $t_\boT(b)=
\pi_B(\boT(b)(t))$, where $\boT(b)\:\boT(X)\rarrow\boT(B)$ is the map 
obtained by applying the functor $\boT$ to the map~$b$, and
$\boT(b)(t)\in\boT(B)$ is the element obtained by applying the map
$\boT(b)$ to the element~$t$.
 So $t_\boT=t_\boT(B)$ is a map $t_\boT\:B^X\rarrow B$, that is
an $X$\+ary operation in (the underlying set of) an arbitrary
$\boT$\+algebra~$B$.

 The following lemma describes the class of monads on the category of
sets that we are interested in.
 A monad satisfying these conditions is called \emph{additive}.

\begin{lem}
 The following conditions on a monad\/ $\boT\:\Sets\rarrow\Sets$ are
equivalent: \par
\textup{(a)} the category\/ $\boT\alg$ is additive; \par
\textup{(b)} the category\/ $\boT\alg$ is abelian; \par
\textup{(c)} there exist elements
\textup{``$x+y$''${}\in\boT(\{x,y\})$}, \ 
\textup{``$-x$''${}\in\boT(\{x\})$}, and\/ $0\in\boT(\varnothing)$,
where\/ $\{x,y\}$ is a two-element set, $\{x\}$ is a one-element set,
and\/ $\varnothing$ is the empty set, such that the corresponding
operations define an abelian group structure on every\/ $\boT$\+algebra
$B$, and all the other operations $t_\boT\:B^X\rarrow B$, \
$t\in\boT(X)$, \ $X\in\Sets$ are abelian group homomorphisms with
respect to these abelian group structures. \par
 The elements \textup{``$x+y$''${}\in\boT(\{x,y\})$}, \
\textup{``$-x$''${}\in\boT(\{x\})$}, and\/ $0\in\boT(\varnothing)$ in
the condition~(c) are unique if they exist.
\end{lem}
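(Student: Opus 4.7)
The plan is to prove (b)$\Rightarrow$(a) (trivial), (a)$\Rightarrow$(c), (c)$\Rightarrow$(b), and then the uniqueness. The central tool throughout will be the following Yoneda-style correspondence: for every set $X$, the assignment $t\mapsto t_\boT$ is a bijection between elements $t\in\boT(X)$ and natural transformations $U(-)^X\rarrow U(-)$ of functors $\boT\alg\rarrow\Sets$, where $U$ denotes the forgetful functor. The inverse sends a natural transformation $\alpha$ to the element $\alpha_{\boT(X)}(\epsilon_\boT(X))\in\boT(X)$, and the verification uses the free-algebra adjunction identification $\Hom_{\boT\alg}(\boT(X),B)=U(B)^X$. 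Identities among natural operations on $\boT$-algebras thereby translate into equations in the various sets $\boT(Y)$, and composition of operations corresponds to the monad multiplication~$\phi_\boT$.

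For (a)$\Rightarrow$(c), additivity endows each Hom-set with an abelian group structure, natural in both variables, with bilinear composition. Taking $A=\boT(\{x\})$ identifies $U(B)$ with $\Hom_{\boT\alg}(\boT(\{x\}),B)$, which then carries an abelian group structure natural in~$B$. By the correspondence above, addition, negation, and the zero constant correspond to unique elements \textup{``$x+y$''}${}\in\boT(\{x,y\})$, \textup{``$-x$''}${}\in\boT(\{x\})$, and $0\in\boT(\varnothing)$, and the abelian group axioms become identities in $\boT(\{x,y,z\})$, $\boT(\{x\})$, and $\boT(\varnothing)$. Finally, bilinearity of composition forces post-composition by any $\boT$-algebra morphism to be a group homomorphism; applied to the morphism $\boT(X)\rarrow B$ classifying any tuple $X\rarrow U(B)$, this shows that every operation $t_\boT\:U(B)^X\rarrow U(B)$ is itself a group homomorphism.

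For (c)$\Rightarrow$(b), condition (c) makes the underlying set of each $\boT$-algebra into an abelian group, and naturality in the $\boT$-algebra variable forces every $\boT$-algebra morphism to be a group homomorphism. Hence $\boT\alg$ is preadditive with bilinear composition; the initial algebra $\boT(\varnothing)$ has a single element and serves as a zero object, and finite biproducts exist as finite products with the diagonal-sum biproduct structure. For a morphism $f\:A\rarrow B$, the kernel in abelian groups inherits a $\boT$-subalgebra structure because every $t_\boT$ is a group homomorphism, so $f\circ t_\boT=t_\boT\circ f$ sends $(\ker f)$-tuples to~$0$. The cokernel exists by cocompleteness of $\boT\alg$ (which is monadic over $\Sets$) and is computed as the quotient abelian group $B/\im f$ with the induced operations, well defined for the same reason. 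The first isomorphism theorem, applied in the abelian group framework and compatible with the $\boT$-action, then supplies the canonical isomorphism from $A/\ker f$ to $\im f$, which is the final axiom needed for abelianness.

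The uniqueness assertion of the lemma is immediate from the first paragraph's correspondence: the abelian group addition, negation, and zero are specific natural binary, unary, and nullary operations on $\boT$-algebras, hence correspond to unique elements of $\boT(\{x,y\})$, $\boT(\{x\})$, and $\boT(\varnothing)$. The main obstacle in the plan will be the bookkeeping in (c)$\Rightarrow$(b) ensuring that kernels, cokernels, and images all inherit coherent $\boT$-algebra structures from their underlying abelian group constructions; once the group-homomorphism property of every $t_\boT$ has been secured, however, each of these closure checks follows routinely from naturality.
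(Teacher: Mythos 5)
Your proof follows essentially the same route as the paper: the corepresentability of the forgetful functor and its powers by the free algebras $\boT(*)$ and $\boT(X)$, giving the bijection between elements of $\boT(X)$ and natural $X$\+ary operations, is exactly the paper's mechanism for (a)$\Rightarrow$(c); and your (c)$\Rightarrow$(b) is the paper's ``repeat the proof that module categories are abelian, with the exact forgetful functor to abelian groups'' argument, just spelled out in more detail (zero object, biproducts, kernels and cokernels inherited from the underlying groups because every $t_\boT$ is additive and the subgroups $\ker f$, $\im f$ are closed under all operations). That part is fine; the only unjustified assertion there is that $\boT(\varnothing)$ is a singleton, which either needs the remark that under (c) every nullary operation is a homomorphism from the trivial group and hence equals $0$, or can be avoided entirely by noting that in a preadditive category the terminal algebra (the one-point set) is automatically a zero object.

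The one genuine lacuna is in your uniqueness paragraph. The injectivity of the correspondence $t\mapsto t_\boT$ only shows that a \emph{given} natural operation comes from a unique element of $\boT(X)$; it does not show that the operations themselves are determined. If two triples of elements both satisfy condition~(c), they a priori define two different abelian group structures on each underlying set $U(B)$, and you have not said why these coincide. The missing step is supplied in the paper in two ways: either invoke the uniqueness of the additive structure on an additive category, or observe (Eckmann--Hilton) that each of the two additions is, by~(c), a homomorphism with respect to the other, and any two abelian group structures on a set that are additive with respect to one another must coincide. With that one sentence added, your argument is complete.
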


\begin{proof}
 (a)~$\Longrightarrow~$(c): For any monad $\boT\:\Sets\rarrow\Sets$
and every set $X$, the map $\pi_{\boT(X)}=\phi_\boT(X)\:\boT(\boT(X))
\rarrow\boT(X)$ defines a $\boT$\+algebra structure on
the set $\boT(X)$.
 This $\boT$\+algebra is called the \emph{free\/ $\boT$\+algebra}
generated by the set~$X$.
 For any $\boT$\+algebra $B$, there is a natural bijection
$\Hom_\boT(\boT(X),B)\simeq B^X$, where $\Hom_\boT(C,B)$ denotes
the set of all morphisms $C\rarrow B$ in the category $\boT\alg$.

 In particular, one has $\Hom_\boT(\boT(*),B)=B$, where $*$~denotes
a one-element set.
 So the forgetful functor $\boT\alg\rarrow\Sets$ assigning to every
$\boT$\+algebra $B$ its underlying set $B$ is corepresented by
the free $\boT$\+algebra $\boT(*)$; and likewise, the functor
assigning to a $\boT$\+algebra $B$ the set $B^X$ is corepresented by
the free $\boT$\+algebra $\boT(X)$.
 Therefore, natural transformations $B^X\rarrow B$ correspond
bijectively to $\boT$\+algebra morphisms $\boT(*)\rarrow\boT(X)$,
i.~e., to elements of the set $\boT(X)$.

 Now if the category $\boT\alg$ is additive, then there must be
a naturally defined abelian group structure on the set of morphisms
$B=\Hom_\boT(\boT(*),B)$ for every $\boT$\+algebra~$B$.
 This means the natural transformations of addition
$B\times B\rarrow B$, inverse element $B\rarrow B$, and zero
element $0\in B$.
 According to the above, these must be come from some elements
``$x+y$''${}\in\boT(\{x,y\})$, \ ``$-x$''${}\in\boT(\{x\})$, and
$0\in\boT(\varnothing)$.

 Biadditivity of compositions of morphisms in an additive category
implies that all other operations on the underlying sets of
$\boT$\+algebras must be abelian group homomorphisms with
respect to this abelian group structure, and the uniqueness of
an additive category structure implies uniqueness of the elements
``$x+y$'', \ ``$-x$'', and~$0$.
 Alternatively, one can say that any two abelian group structures on
a given set that are additive with respect to one another always
coincide. 

 (c)~$\Longrightarrow~$(b): One observes that, for any monad
$\boT\:\Sets\rarrow\Sets$, every map between the underlying sets of
two $\boT$\+algebras $C\rarrow B$ forming commutative squares with
the $X$\+ary operations $t_\boT(C)$ and $t_\boT(B)$ for all the sets $X$
and elements $t\in\boT(X)$ is a $\boT$\+algebra morphism
(it suffices to take $X=C$).
 Furthermore, a collection of $X$\+ary operations $t_A\:A^X\rarrow A$
defined on a given set $A$ for all sets $X$ and elements $t\in\boT(X)$
corresponds to a $\boT$\+algebra structure on $A$ if and only if
the maps~$t_A$ satisfy the composition and unit relations encoded in
the structural natural transformations $\phi_\boT\:\boT\circ\boT\rarrow
\boT$ and~$\epsilon_\boT\:\Id\rarrow\boT$ of the monad~$\boT$.

 Then, in the assumption of the condition~(c), one repeats the proof of
the assertion that the category of modules over an associative ring is
abelian, replacing unary of finitary additive operations with
infinitary ones.
 The point is that the forgetful functor assigning to every
$\boT$\+algebra $B$ its underlying abelian group $B$ is exact; so
the kernels, images, and cokernels of morphisms in $\boT\alg$ can be
constructed as the kernels, images, and cokernels of the morphisms of
the underlying abelian groups, endowed with the induced
operations~$t_A$.
\end{proof}

 In the case of an additive monad $\boT$, we modify our terminology
and say ``$\boT$\+modules'' instead of ``$\boT$\+algebras''.
 The abelian category of $\boT$\+algebras/modules is denoted by
$\boT\modl$ instead of $\boT\alg$ in this case.

\begin{exs} \label{additive-monads-examples}
 (1)~Let $\sC$ be a category with set-indexed coproducts, and let
$M\in\sC$ be an object.
 Then the functor $\boT_M\:\Sets\rarrow\Sets$ assigning to every
set $X$ the set $\boT_M(X)=\Hom_\sC(M,M^{(X)})$ of all morphisms
in $\sC$ from $M$ into the coproduct $M^{(X)}$ of $X$ copies of $M$
in $\sC$ a monad on the category of sets.
 The monad $\boT_M$ can be viewed as a combinatorial datum
encoding the category structure of the full subcategory formed
by all the objects $M^{(X)}\in\sC$.

 Indeed, given two sets $Y$ and $X$, the set of all morphisms
$M^{(Y)}\rarrow M^{(X)}$ in $\sC$ can be computed as
the set $\boT_M(X)^Y$.
 Suppose that we are given a map $f\:Y\rarrow \boT_M(X)$; then
the composition with the related morphism $M^{(Y)}\rarrow M^{(X)}$
defines a map $\boT_M(Y)\rarrow\boT_M(X)$.
 In particular, one can set $Y=\boT_M(X)$ and take~$f$ to be
the identity map; then we obtain a natural map
$\boT_M(\boT_M(X))\rarrow\boT_M(X)$.
 This is our monad multiplication map $\phi_{\boT_M}(X)$.

 In a more fancy language, one explains the construction of
the monad~$\boT_M$ as follows.
 The functor $F\:\Sets\rarrow\sC$ taking a set $X$ to the object
$F(X)=M^{(X)}$ is left adjoint to the functor $G\:\sC\rarrow\Sets$
taking an object $N\in\sC$ to the set $G(N)=\Hom_\sC(M,N)$.
 The functor $\boT_M$ is the composition of these two adoint
functors $\boT_M=GF\:\Sets\rarrow\Sets$; hence the monad
structure.
 The full subcategory formed by all the objects $M^{(X)}$ in $\sC$
can be recovered as the \emph{Kleisli category} of the monad $\boT_M$,
that is the full subcategory of all free $\boT_M$\+algebras in
$\boT_M\alg$.
 The free $\boT_M$\+algebra $\boT_M(X)\in\boT_M\alg$ corresponds to
the object $M^{(X)}\in\sC$ \cite{V1,V2}.

\smallskip

 (2)~Now let $\sC$ be an additive category with set-indexed coproducts
containing the images of idempotent endomorphisms of its objects, and
let $M\in\sC$ be an object.
 Then $\boT_M$ is an additive monad on $\Sets$: the element
``$x+y$''${}\in\boT_M(\{x,y\})$ corresponds to the diagonal morphism
$M\rarrow M\oplus M$, the element ``$-x$''${}\in\boT_M(\{x\})$
corresponds to the morphism $-\id_M\:M\rarrow M$, and the element
$0\in\boT_M(\varnothing)$ corresponds to the zero morphism $M\rarrow0$.

 Denote by $\Add_\sC(M)\subset\sC$ the full subcategory of all direct
summands of coproducts of copies of $M$ in~$\sC$.
 Then $\sB=\boT_M\modl$ is the unique abelian category with enough
projective objects such that the full subcategory $\sB_\proj\subset\sB$
of projective objects in $\sB$ is equivalent to the full subcategory
$\Add_\sC(M)\subset\sC$ \cite[Theorem~1.1(a)]{PS2}.

 Alternatively, the category $\sB$ can be constructed as the category
of \emph{coherent functors} on $\Add_\sC(M)$.
 One observes that the category $\Add_\sC(M)$ has weak kernels, hence
the category of coherent functors on it is abelian~\cite[Lemma~1]{Kra}.
 We refer to the introduction to~\cite{PR} for further references.

\smallskip

 (3)~In particular, let $\sB$ be an abelian category with set-indexed
coproducts and a projective generator~$P$.
 Then $\sB$ is equivalent to the category of modules over the additive
monad $\boT_P\:X\longmapsto\Hom_\sB(P,P^{(X)})$ on the category
of sets.
 The equivalence is provided by the functor assigning to every object
$N\in\sB$ the set of morphisms $\Hom_\sB(P,N)$ with its natural
$\boT_P$\+module structure (see the discussion in the introduction
to~\cite{PR} and a further discussion in~\cite[Section~6.3]{PS}).

 Conversely, for any additive monad $\boT\:\Sets\rarrow\Sets$,
the abelian category $\boT\modl$ has set-indexed coproducts, and
the free $\boT$\+module with one generator $P=\boT(*)$ is
a natural projective generator of $\boT\modl$.

\smallskip

 (4)~Let $R$ be an associative ring and $\sB\subset R\modl$ be
a reflective full subcategory in the category of left $R$\+modules.
 Denote the reflector (i.~e., the functor left adjoint to
the embedding $\sB\rarrow R\modl$) by $\Delta\:R\modl\rarrow\sB$.
 Then the category $\sB$ has small limits (which coincide with
the limits in $R\modl$, as $\sB\subset R\modl$ is closed under limits)
and colimits (which can be computed by applying the functor $\Delta$
to the coproducts of object of $\sB$ taken in $R\modl$).

 Suppose further that the full subcategory $\sB$ is closed under
cokernels in $R\modl$.
 Then $\sB$ is an abelian category with set-indexed products and
coproducts, and its embedding $\sB\rarrow R\modl$ is an exact functor
preserving products.
 Applying the functor $\Delta$ to the free $R$\+module with one
 generator $R$, we obtain a natural projective generator $P=\Delta(R)$
of the abelian category~$\sB$.
 Thus the category $\sB$ is equivalent to the category of modules over
 the additive monad $\boT_P\:\Sets\rarrow\Sets$.

 Denoting by $R[X]$ the left $R$\+module freely generated by a set $X$,
one can compute the coproduct of $X$ copies of $P$ in $\sB$ as
$P^{(X)}=\Delta(R[X])$.
 Hence the monad $\boT_P$ assigns to a set $X$ the underlying set
of the abelian group/left $R$\+module $\Hom_\sB(P,P^{(X)})=
\Hom_R(\Delta(R),\Delta(R[X]))=\Hom_R(R,\Delta(R[X]))=
\Delta(R[X])$. 
\end{exs}

 Let $\sB$ be an abelian category with set-indexed coproducts and
a projective generator $P$, and let $\kappa$~be a regular cardinal.
 Then the category $\sB$ is locally $\kappa$\+presentable and
the object $P\in\sB$ is $\kappa$\+presentable if and only if
the monad $\boT_P$ in Example~\ref{additive-monads-examples}(3)
is $\kappa$\+accessible, i.~e., the functor $\boT_P$ preserves
$\kappa$\+filtered colimits.

 If the additive category $\sC$ in
Example~\ref{additive-monads-examples}(2) is accessible and
the object $M\in\sC$ is $\kappa$\+presentable (or, more generally,
$\kappa$\+generated), then the monad $\boT_M$ is $\kappa$\+accessible.
 Assuming Vop\v enka's principle, any full subcategory closed under
limits in $R\modl$ is (reflective and) locally
presentable~\cite[Corollary~6.24]{AR}, so the monad $\boT_P$ in
Example~\ref{additive-monads-examples}(4) is always accessible.

 Here are some examples of accessible additive monads on the category
of sets.

\begin{exs}  \label{accessible-additive-monads-examples}
 (1)~Let $R$ be an associative ring.
 Then the functor $\boT_R$ assigning to a set $X$ the set $R[X]$ of
all finite formal linear combinations of elements of the set $X$ with
coefficients in $R$ has a natural structure of monad on the category
of sets.

 The monad multiplication map $\phi_{\boT_R}(X)\:R[R[X]]\rarrow R[X]$
opens the parentheses in a formal linear combination of formal linear
combinations of elements of $X$, assigning to it a formal linear
combination of elements of $X$ with the coefficients computed using
the multiplication and addition operations in the ring~$R$.
 The monad unit map $\epsilon_{\boT_R(X)}\:X\rarrow R[X]$ assigns
to an element $x\in X$ the formal linear combination into which
the element~$x$ enters with the coefficient~$1$ and all the other
elements of $X$ enter with the coefficient~$0$.

 The abelian category $\boT_R\modl$ is the category of left
$R$\+modules $R\modl$.
 The natural projective generator $P\in\boT_R\modl$ is the free
left $R$\+module with one generator, $P=R$.

\smallskip

 (2)~Let $\R$ be a separated and complete topological associative
ring with a base of neighborhoods of zero formed by open right ideals.
 For any set $X$, denote by $\R[[X]]$ the set of all infinite formal
linear combinations of elements of $X$ with the coefficients
converging to~$0$ in the topology of~$\R$.
 This means that an expression $\sum_{x\in X}r_x x$, \ $r_x\in\R$,
belongs to $\R[[X]]$ if and only for any open subset $\U\subset\R$,
\ $0\in\U$, the set of all $x\in X$ for which $r_x$~does not belong
to $\U$ is finite.
 In other words, we have $\R[[X]]=\varprojlim_\J\R/\J[X]$, where
the projective limit is taken over all the open right ideals
$\J\subset\R$ and, for any abelian group $A$ and a set $X$,
the notation $A[X]$ stands for the direct sum of $X$ copies of~$A$.

 Then, for any map of sets $f\:X\rarrow Y$, the induced map
$\R[[f]]\:\R[[X]]\rarrow\R[[Y]]$ is constructed by computing
the infinite sums $\sum_{x:f(x)=y}r_x$ for all $y\in Y$ as the limits
of finite partial sums in the topology of~$\R$.
 Furthermore, the functor $\boT_\R\:X\longmapsto\R[[X]]$ has a natural
structure of monad on the category of sets.
 The monad multiplication map $\phi_{\boT_\R}(X)\:\R[[\R[[X]]]]\rarrow
\R[[X]]$ opens the parentheses, computing the coefficients using
the multiplication operation in the ring $\R$ and the infinite
summation in the sense of the passage to the limit of finite
partial sums.
 The above condition on the topology of $\R$ ensures
the convergence~\cite[Section~5]{PR}.

 The monad $\boT_\R$ is additive and $\lambda^+$\+accessible, where
$\lambda^+$~is the successor cardinal of the cardinality of a base
of neighborhoods of zero in~$\R$.
 Modules over the monad $\boT_\R$ are called \emph{left contramodules}
over the topological ring~$\R$, and the abelian category of left
$\R$\+contramodules is denoted by $\boT_\R\modl=\R\contra$.
 So the abelian category $\R\contra$ is locally $\lambda^+$\+presentable
with a natural $\lambda^+$\+presentable projective generator, which is
the free left $\R$\+contramodule with one generator $P=\R=\R[[*]]$.

 The category $\R\contra$ has the additional property that, for every
family of projective objects $P_\alpha\in\R\contra$, the natural
morphism $\coprod_\alpha P_\alpha\rarrow\prod_\alpha P_\alpha$ is
a monomorphism~\cite[Section~1.2]{PR}.

\smallskip

 (3)~Let $R$ be an associative ring and $M$ be a left $R$\+module.
 Denote by $\S=\Hom_R(M,M)^\rop$ opposite ring to the ring of
endomorphisms of the left $R$\+module $M$ (so that the ring $\S$
acts in $M$ on the right).
 The ring $\S$ is naturally endowed with a complete, separated
topological ring structure in which the base of neighborhoods of
zero is formed by the annihilators of finitely-generated submodules
in~$M$.
 Such annihilators are left ideals in $\Hom_R(M,M)$ and right ideals
in~$\S$; so open right ideals form a base of neighborhoods of
zero in~$\S$.

 One readily checks that the monad $\boT_M$ from
Examples~\ref{additive-monads-examples}(1\+2) is naturally isomorphic
to the monad $\boT_\S$ from~(2) in this case~\cite[proof of
Theorem~7.1]{PS}.
 Thus the abelian category $\sB$ from
Example~\ref{additive-monads-examples}(2) corresponding to an object
$M$ of the category of modules over an associative ring $R$ is nothing
but the category of left contramodules $\S\contra$ over the topological
associative ring $\S=\Hom_R(M,M)^\rop$.

 Similar results can be obtained for many abelian or additive categories
$\sC$ other than the category of modules over an associative ring~$R$
(including, in particular, locally finitely presentable Grothendieck
abelian categories, the categories of comodules and semimodules,
etc.)~\cite[Sections~9\+-10]{PS}.

\smallskip

 (4)~Let $R$ be an associative ring and $f\:U^{-1}\rarrow U^0$ be
a morphism of free left $R$\+modules.
 Denote by $\sB=f^\perp$ the full subcategory in $R\modl$ formed by
all the left $R$\+modules $B$ for which $\Hom_R(f,B)\:\Hom_R(U^0,B)
\rarrow\Hom_R(U^{-1},B)$ is an isomorphism.
 Then the full subcategory $\sB$ is closed under the kernels,
cokernels, extensions, and infinite products in $R\modl$
\cite[Remark~1.3]{Pcta}.
 So $\sB$ is an abelian category and the embedding functor
$\sB\rarrow R\modl$ is exact.
 Furthermore, if $\lambda^+$~is the successor cardinal of the supremum
of the cardinalities of the sets of free generators of $U^{-1}$ and
$U^0$, then the full subcategory $\sB\subset R\modl$ is closed under
$\lambda^+$\+filtered colimits.

 According to~\cite[Theorem and Corollary~2.48]{AR}, it follows that
$\sB$ is a locally $\lambda^+$\+presentable category and a reflective
subcategory in $R\modl$.
 Denoting the reflector by $\Delta\:R\modl\rarrow\sB$, the object
$P=\Delta(R)$ is a $\lambda^+$\+presentable projective generator of
the abelian category~$\sB$ (see
Example~\ref{additive-monads-examples}(4) above,
cf.~\cite[Examples~4.1]{PR}).
 Thus $\sB$ is equivalent to the category of modules over a certain
$\lambda^+$\+accessible additive monad $\boT_f\:\Sets\rarrow\Sets$.

 The monad $\boT_f$ can be described more explicitly in terms of
generators and relations in the following way.
 Let $V$ and $W$ denote the sets of free generators of
the $R$\+modules $U^0$ and $U^{-1}$, respectively.
 For every element $v\in V$, denote also by~$v$ the related
$R$\+module morphism $R\rarrow U^0$.
 For any object $B\in\sB$, consider the map of abelian groups
$\Hom_R(U^{-1},B)\rarrow\Hom_R(U^0,B)$ inverse to the isomorphism
$\Hom_R(f,B)$, and compose it with the coordinate projection
map $\Hom_R(v,B)\:\Hom_R(U^0,B)\rarrow B$.
 The resulting composition $t_v\:B^W=\Hom_R(U^{-1},B)\rarrow B$ is
a $W$\+ary operation in the set~$B$, so $t_v\in\boT_f(W)$.
 Together with the operations of addition and multiplication by
the elements of $R$, the operations~$t_v$, \,$v\in V$ generate
the monad~$\boT_f$.
 The defining relations between these generating operations are
the defining relations for the monad $\boT_R$ (describing the notion
of an $R$\+module) together with the relations guaranteeing that
the map of sets $(t_v)_{v\in V}\:\Hom_R(U^{-1},B)\rarrow\Hom_R(U^0,B)$
is inverse to the map $\Hom_R(f,B)$ for any $\boT_f$\+module~$B$.

\smallskip

 (5)~Let $R$ be a commutative ring and $s\in R$ be an element.
Denote by $E$ the ring $R[s^{-1}]$, and let $f\:U^{-1}\rarrow U^0$
be a free $R$\+module resolution of the $R$\+module~$E$
\cite[proof of Lemma~2.1]{Pcta}.
 The objects of the full subcategory $\sB=f^\perp\subset R\modl$ are
called \emph{$s$\+contramodule $R$\+modules}~\cite{Pmgm,Pcta}.
 The reflector $\Delta=\Delta_s\:R\modl\rarrow\sB$ is described
in~\cite[Proposition~2.1]{Pmgm} and~\cite[Theorem~6.4]{Pcta}.

 In particular, $P=\Delta_s(R)$ is a projective generator of the abelian
category~$\sB$.
 For any set $X$, the coproduct of $X$ copies of the object $P$ in
$\sB$ can be computed as $P^{(X)}=\Delta_s(R[X])$ (while the infinite
products of any objects of $\sB$ coincide with those computed
in $R\modl$).

 Furthermore, for any left $R$\+module $M$, there is a natural short
exact sequence of left $R$\+modules~\cite[Lemma~6.7]{Pcta}
$$
 0\lrarrow\varprojlim\nolimits^1_n\, {}_{s^n}M\lrarrow \Delta_s(M)
  \lrarrow\varprojlim\nolimits_n M/s^nM\lrarrow0,
$$
where ${}_{s^n}M\subset M$ denotes the submodule of all elements in
$M$ annihilated by~$s^n$, the maps in the projective system
${}_sM\larrow{}_{s^2}M\larrow{}_{s^3}M\larrow\dotsb$ are the operators
of multiplication with~$s$, and the maps in the projective system
$M/sM\larrow M/s^2M\larrow M/s^3M\larrow\dotsb$ are the usual
projections.

\smallskip

 (6)~The following example shows that some of the abelian categories
$\sB=f^\perp$ as in~(4) differ from all the abelian categories of
contramodules over topological rings $\R\contra$ as in~(2).

 Let $k$~be a field and $R$ be a commutative $k$\+algebra generated
by an infinite sequence of elements $x_1$, $x_2$, $x_3$,~\dots\ and
an additional generator $s$ with the relations $x_ix_j=0$ for all
$i$, $j\ge1$ and $s^ix_i=0$ for all $i\ge1$ \cite[Example~2.6]{Pmgm}.
 Let $f\:U^{-1}\rarrow U^0$ be a free $R$\+module resolution of the ring
$R[s^{-1}]\simeq k[s,s^{-1}]$ viewed as an $R$\+module, as in~(4).
 Set $\sB=f^\perp$ to be the abelian category of $s$\+contramodule
$R$\+modules and $P=\Delta_s(R)$ to be its natural projective
generator.

 Then, for any set $Y$, one computes, in the spirit of
the computation in~\cite[Example~2.6]{Pmgm}, that the $R$\+module
$\varprojlim_n^1\,{}_{s^n}R[Y]$ is isomorphic to $W_Y[[s]]$,
where $W_Y=\prod_{i=1}^\infty k[Y]/\bigoplus_{i=1}^\infty k[Y]$.
 In particular, the natural map $\varprojlim_n^1\,{}_{s^n}R[Y]
\rarrow(\varprojlim_n^1\,{}_{s^n}R)^Y$ is \emph{not} injective when
the set $Y$ is infinite, and it follows that the natural map $P^{(Y)}
\rarrow P^Y$ from the coproduct of $Y$ copies of $P$ in $\sB$ to
the product of the same copies is not injective, either (hence
\emph{not} a monomorphism).

\smallskip

 (7)~On the other hand, in the context of~(5), assume that
the $s$\+torsion in $R$ is bounded (that is, there exists $m\ge1$
such that $s^nr=0$ for $r\in R$ and $n\ge1$ implies $s^mr=0$).
 Denote by $\R=\varprojlim_n R/s^nR$ the $s$\+adic completion of
the ring $R$, endowed with the $s$\+adic (\,$=$~projective limit)
topology.
 Then the abelian category $\sB=f^\perp$ is equivalent to
the abelian category $\R\contra$, as we will see below in
Example~\ref{comm-ring-ideal-contramodules-examples}(5).
\end{exs}

\Section{Examples of Fully Faithful Contramodule Forgetful Functors}
\label{examples-of-fully-faithful-forgetful-secn}

 The aim of this section is to generalize the results
of~\cite[Theorem~B.1.1]{Pweak} and~\cite[Theorem~C.5.1]{Pcosh},
claiming that the natural forgetful functors from the categories
of contramodules to related categories of modules are fully faithful
under certain assumptions.
 We leave aside the result of~\cite[Theorem~2.1]{Psm}, postponing
the job of its generalization to the next
Section~\ref{topological-rings-secn}.

 Let $\boT\:\Sets\rarrow\Sets$ be an additive monad.
 Then the underlying set of the free $\boT$\+module with one generator
$\boT(*)$ has a natural associative ring structure.
 This is the ring of unary operations in the monad~$\boT$, which can be
also defined as the opposite ring $\boT(*)=
\Hom_\boT(\boT(*),\boT(*))^\rop$ to the ring of endomorphisms of
the free $\boT$\+module with one generator $\boT(*)\in\boT\modl$.
 Let $R$ be another associative ring and $\theta\:R\rarrow\boT(*)$ be
an associative ring homomorphism.
 Then the natural bijection $C\simeq\Hom_\boT(\boT(*),C)$ endows
the underlying set of every $\boT$\+module $C$ with a left
$R$\+module structure, providing a forgetful functor
$\boT\modl\rarrow R\modl$.

 Let $\sB\subset R\modl$ be a reflective full subcategory closed under
cokernels in $R\modl$.
 Let $\Delta\:R\modl\rarrow\sB$ denote the reflector, and set
$P=\Delta(R)$.
 Then $P$ is a projective generator of the abelian category $\sB$,
and the coproduct of $X$ copies of $P$ in $\sB$ can be computed as
$P^{(X)}=\Delta(R[X])$ (see Example~\ref{additive-monads-examples}(4)).

 The following proposition provides a sufficient condition (in fact,
a criterion) for existence of an isomorphism of monads
$\boT\simeq\boT_P$.

\begin{prop} \label{fully-faithful-forgetful-functor-image-identified}
 Suppose that for every set $X$ there is an isomorphism of
left $R$\+modules $\Delta(R[X])\simeq\boT(X)$ forming a commutative
diagram with the adjunction morphism $R[X]\rarrow\Delta(R[X])$
and the natural $R$\+module morphism
$$
 R[X]\lrarrow\Hom_\boT(\boT(*),\boT(*))[X]\lrarrow
 \Hom_\boT(\boT(*),\boT(X))\.=\.\boT(X).
$$
 Then the forgetful functor\/ $\boT\modl\rarrow R\modl$ is fully
faithful and its essential image coincides with the full subcategory\/
$\sB\subset R\modl$.
  So there is an equivalence of abelian categories\/ $\boT\modl
\simeq\sB$ identifying the free\/ $\boT$\+module $\boT(X)$ with
the projective object $P^{(X)}\in\sB$ for every set~$X$.

 Conversely, if the forgetful functor $\boT\modl\rarrow R\modl$ is
fully faithful and its essential image coincides with the full
subcategory\/ $\sB\subset R\modl$, then for every set $X$ there is
a natural isomorphism of left $R$\+modules $\Delta(R[X])\simeq\boT(X)$
forming a commutative diagram with the adjunction morphism $R[X]
\rarrow\Delta(R[X])$ and the natural morphism $R[X]\rarrow\boT(X)$.
\end{prop}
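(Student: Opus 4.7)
The plan is to prove the main direction by showing directly that the forgetful functor $U_\boT\:\boT\modl\rarrow R\modl$, induced by the ring homomorphism $\theta$, factors through $\sB$ and defines an equivalence $\boT\modl\simeq\sB$ identifying the free $\boT$\+module $\boT(X)$ with $P^{(X)}=\Delta(R[X])$ for every~$X$. The first ingredient is that $U_\boT$ is exact: as in the proof of the preceding lemma, kernels, images and cokernels in $\boT\modl$ can be computed on underlying abelian groups, hence on underlying $R$\+modules. Given $C\in\boT\modl$, take a free presentation $\boT(Y)\rarrow\boT(X)\rarrow C\rarrow0$; applying $U_\boT$ and using the hypothesis $\boT(X)\simeq\Delta(R[X])\in\sB$ together with the closure of $\sB$ under cokernels in $R\modl$, we conclude $U_\boT(C)\in\sB$, so $U_\boT$ factors as $\boT\modl\rarrow\sB\hookrightarrow R\modl$.

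Full-faithfulness is established in two stages. On free $\boT$\+modules, the identifications $\Hom_\boT(\boT(Y),\boT(X))\simeq\boT(X)^Y$ and $\Hom_\sB(\Delta(R[Y]),\Delta(R[X]))\simeq\Hom_R(R[Y],\Delta(R[X]))\simeq\Delta(R[X])^Y$ agree through~$\alpha_X$, and the hypothesis that $\alpha$ is compatible with the natural morphisms from $R[X]$ guarantees that this identification coincides with the map induced by $U_\boT$. For arbitrary $C,D\in\boT\modl$, take a free presentation of~$C$ and apply $\Hom_\boT(-,D)$ and $\Hom_\sB(-,U_\boT(D))$; the two resulting left-exact sequences fit into a comparison diagram via $U_\boT$ in which the terms corresponding to $\boT(X)$ and $\boT(Y)$ are bijections by the free case, so a five-lemma argument yields that $\Hom_\boT(C,D)\rarrow\Hom_\sB(U_\boT(C),U_\boT(D))$ is a bijection. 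For essential surjectivity, any $B\in\sB$ admits a presentation $P^{(Y)}\rarrow P^{(X)}\rarrow B\rarrow0$ by free $\sB$\+objects; lifting the morphism of free objects to a $\boT$\+module morphism via the full-faithfulness on free modules and taking its cokernel in $\boT\modl$ produces an object $C$ with $U_\boT(C)\simeq B$ by exactness of $U_\boT$.

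For the converse, assume $U_\boT$ is fully faithful with essential image $\sB$, giving an equivalence $\boT\modl\simeq\sB$ commuting with the forgetful functors to $R\modl$. The $R$\+module underlying $\boT(X)$ lies in $\sB$, so the natural map $R[X]\rarrow\boT(X)$ factors uniquely through the reflection morphism $R[X]\rarrow\Delta(R[X])$, yielding a comparison $\sB$\+morphism $\Delta(R[X])\rarrow\boT(X)$. This is an isomorphism because both sides corepresent the functor $B\mapsto B^X$ on $\sB\simeq\boT\modl$ (the source by the defining property of the reflection, the target because it is the free $\boT$\+module on~$X$ and $U_\boT$ is an equivalence), and the comparison realizes the identity natural transformation between these corepresentations. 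The compatibility of $\Delta(R[X])\rarrow\boT(X)$ with the natural morphisms from $R[X]$ is automatic from the construction.

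The main technical subtlety is the five-lemma argument for full-faithfulness, where one must verify that the bijections $\Hom_\boT(\boT(X),D)\simeq\Hom_\sB(\Delta(R[X]),U_\boT(D))$ are natural in the relevant arguments in a way compatible with the maps in the comparison diagram; this reduces cleanly to the compatibility of $\alpha_X$ with the $R$\+linear map $R[X]\rarrow\boT(X)$ together with the universal property of the reflection~$\Delta$.
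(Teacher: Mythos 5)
Your proposal is correct and follows essentially the same route as the paper: exactness of the forgetful functor plus free presentations to land in $\sB$, the corepresentability identifications $\Hom_\boT(\boT(X),D)\simeq D^X\simeq\Hom_\sB(P^{(X)},U_\boT(D))$ to get full-and-faithfulness (the paper leaves the reduction from free modules to arbitrary ones as an easy step, which your five-lemma comparison makes explicit), and free presentations again for essential surjectivity. Your converse, constructing the comparison morphism via the reflection and invoking corepresentability of $B\mapsto B^X$, is just a slight repackaging of the paper's identification of the two left adjoints to the forgetful functors to $\Sets$.
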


\begin{proof}
 The forgetful functor $\boT\modl\rarrow R\modl$ is exact by
construction, and in particular, it preserves cokernels.
 As every $\boT$\+module is the cokernel of a morphism of free
$\boT$\+modules $\boT(X)\rarrow\boT(Y)$ for some sets $X$ and $Y$,
and the cokernel of any $R$\+module morphism $P^{(X)}\rarrow P^{(Y)}$
belongs to $\sB$, it follows that the image of the forgetful functor
is contained in the full subcategory $\sB\subset R\modl$.

 Furthermore, for any sets $X$ and $Y$ one has
\begin{multline*}
 \Hom_\boT(\boT(X),\boT(Y))\simeq\Hom_R(R[X],\boT(Y)) \\
 \simeq\Hom_R(R[X],P^{(Y)})\simeq\Hom_R(P^{(X)},P^{(Y)}),
\end{multline*}
so the forgetful functor is fully faithful in restriction to the full
subcategory of free $\boT$\+modules in $\boT\modl$.
 Commutativity of the diagram involved in this computation can be
checked using the assumption of commutativity of the diagram in
the formulation of the proposition.
 It follows easily that the forgetful functor is fully faithful on
the whole abelian category of $\boT$\+modules.

 Finally, it remains to notice that every object of $\sB$ is
the cokernel of a morphism $P^{(X)}\rarrow P^{(Y)}$ for some
sets $X$ and $Y$, since $P$ is a projective generator on~$\sB$.
 Hence every object of $\sB$ belongs to the essential image of
the forgetful functor.

 Alternatively, one can argue in the following way.
 Suppose that we already know (e.~g., from the first paragraph of
this proof) that the image of the forgetful functor $\boT\modl
\rarrow R\modl$ is contained in the full subcategory
$\sB\subset R\modl$.
 Then, by the adjunction property of the functor $\Delta$, for
every set $X$ there exists a unique left $R$\+module morphism
$\Delta(R[X])\rarrow\boT(X)$ forming a commutative triangle
diagram with the morphisms $R[X]\rarrow\Delta(R[X])$ and
$R[X]\rarrow\boT(X)$.

 According to Example~\ref{additive-monads-examples}(4), the category
$\sB$ is equivalent to the category of modules over the monad
$\boT_\sB=\boT_P\:X\longmapsto\Delta(R[X])$.
 The functor $\boT\modl\rarrow\sB\simeq\boT_\sB\modl$ is a functor
between two categories of modules over monads on the category of
sets, and it forms a commutative triangle diagram of functors with
the forgetful functors $\boT\modl\rarrow\Sets$ and $\boT_\sB\modl
\rarrow\Sets$.
 Hence our functor is induced by a morphism of monads $\boT_\sB\rarrow
\boT$, which is provided by the above construction.
 Thus the functor $\boT\modl\rarrow\sB$ is an equivalence of categories
whenever the map $\Delta(R[X])\rarrow\boT(X)$ is bijective
for every set~$X$.

 Conversely, if the forgetful functor is an equivalence of categories
$\boT\modl\simeq\sB$, then this equivalence of categories also
identifies the forgetful functors to the category of sets,
$\boT\modl\rarrow\Sets$ and $\sB\rarrow R\modl\rarrow\Sets$.
 Thus it also forms a commutative diagram with the functors adjoint
to the forgetful functors to sets, $X\longmapsto\boT(X)\:
\Sets\rarrow\boT\modl$ and $X\longmapsto\Delta(R[X])\:\Sets
\rarrow\sB$.
 Hence a natural isomorphism of left $R$\+modules $\Delta(R[X])
\simeq\boT(X)$ forming a commutative diagram with the adjunction
maps of sets $X\rarrow\Delta(R[X])$ and $X\rarrow\boT(X)$, and
consequently also with the morphisms of left $R$\+modules
$R[X]\rarrow\Delta(R[X])$ and $R[X]\rarrow\boT(X)$.
\end{proof}

 The observation that (what was then called) an \emph{Ext-$p$-complete}
or \emph{weakly $p$\+complete} abelian group carries a uniquely defined
structure of a module over the ring of $p$\+adic integers $\boZ_p$,
and these $\boZ_p$\+module structures are preserved by any abelian
group homomorphisms between such groups, goes back, at least,
to~\cite[Lemma~4.13]{Jan}.
 The following series of examples shows how far it can be generalized
using the contemporary techniques.  {\hbadness=1100\par}

\begin{exs} \label{comm-ring-ideal-contramodules-examples}
 (1)~Let $R$ be a commutative ring and $I\subset R$ be the ideal
generated by a finite set of elements $s_1$,~\dots, $s_m\in R$.
 An $R$\+module $C$ is said to be an \emph{$I$\+contramodule} if
$\Hom_R(R[s_j^{-1}],C)=0=\Ext_R^1(R[s_j^{-1}],C)$ for all $1\le j\le m$.
 This property does not depend on a chosen set of generators
$s_1$,~\dots, $s_m$ of an ideal $I\subset R$, but only on the ideal $I$
itself~\cite[Theorem~5.1]{Pcta}.

 Let $E$ denote the $R$\+module $\bigoplus_{j=1}^m R[s_j^{-1}]$,
and let $f\:U^{-1}\rarrow U^0$ be a two-term free resolution of
the $R$\+module~$E$.
 Then the full subcategory of $I$\+contra\-module $R$\+modules
$R\modl_{I\ctra}\subset R\modl$ coincides with the full subcategory
$\sB=f^\perp\subset R\modl$ discussed in
Example~\ref{accessible-additive-monads-examples}(4).
 In particular, the embedding functor $R\modl_{I\ctra}\rarrow R\modl$
has a left adjoint functor $\Delta_I\:R\modl\rarrow R\modl_{I\ctra}$
described in~\cite[Proposition~2.1]{Pmgm}
and~\cite[Theorem~7.2]{Pcta}.
 The abelian category $R\modl_{I\ctra}$ is equivalent to the category
of modules over the additive, $\aleph_1$\+accessible monad
$\boT_{R,I}=\boT_f$ on the category of sets, assigning to every set $X$
the underlying set of the $R$\+module $\boT_{R,I}(X)=\Delta_I(R[X])$.

\smallskip

 (2)~Let $R$ be a commutative ring and $I\subset R$ be a finitely
generated ideal.
 Denote by $\R=\varprojlim_n R/I^n$ the $I$\+adic completion of the ring
$R$ endowed with the $I$\+adic (\,$=$~projective limit) topology.
 Consider the abelian category of $\R$\+contramodules $\R\contra$
defined in Example~\ref{accessible-additive-monads-examples}(2).
 Then the image of the forgetful functor $\R\contra\rarrow R\modl$
is contained in the full subcategory of $I$\+contramodule $R$\+modules
$R\modl_{I\ctra}\subset R\modl$ discussed in~(1).

 Indeed, it suffices to check that the free $\R$\+contramodules are
$I$\+contramodule $R$\+modules, as every $\R$\+contramodule is
the cokernel of a morphism of free $\R$\+contra\-modules.
 Now, for any set $X$, the free $\R$\+contramodule
$\R[[X]]=\varprojlim_n R/I^n[X]$ is an $I$\+adically separated and
complete $R$\+module, and all such $R$\+modules are
$I$\+contramodules~\cite[Lemma~5.7]{Pcta}.

 Furthermore, it follows from an appropriate generalization
of~\cite[Theorem~2.1]{Psm} that the forgetful functor $\R\contra\rarrow
R\modl$ is fully faithful for any finitely-generated ideal $I$ in
a commutative ring $R$ (see
Examples~\ref{central-adic-topological-ring}(2\+3) below).
 Thus the abelian category $\R\contra$ is a full subcategory in
the abelian category $R\modl_{I\ctra}$.

\smallskip

 (3)~In the same context, denote by $\Lambda_I$ the $I$\+adic completion
functor assigning to an $R$\+module $M$ the $R$\+module
$\Lambda_I(M)=\varprojlim_n M/I^nM$.
 Then for any $R$\+module $M$ there exists a unique $R$\+module
morphism $\Delta_I(M)\rarrow\Lambda_I(M)$ forming a commutative
diagram with the natural morphisms $M\rarrow\Delta_I(M)$ and
$M\rarrow\Lambda_I(M)$.
 According to
Proposition~\ref{fully-faithful-forgetful-functor-image-identified},
we can conclude that the forgetful functor $\R\contra\rarrow
R\modl_{I\ctra}$ is an equivalence of abelian categories if and only if
the natural morphism $\Delta_I(R[X])\rarrow\Lambda_I(R[X])$ is
an isomorphism for every set~$X$.

 By~\cite[Lemma~2.5]{Pmgm}, the latter condition holds when $I\subset R$
is a weakly proregular ideal in the sense of~\cite{Sch,PSY}.
 Hence the forgetful functor $\R\contra\rarrow R\modl_{I\ctra}$ is
an equivalence of categories for any weakly proregular finitely
generated ideal $I$ in a commutative ring~$R$.
 Moreover, it suffices to require the weak proregularity condition
(the homology of the Koszul/telescope complexes forming a pro-zero
projective system) to hold in the homological degree~$1$, i.~e.,
for the modules $H_1(\Hom_R(T^\bu_n(R;s_1,\dotsc,s_m),R))$.
 Indeed, the natural morphism $\Delta_I(R[X])\rarrow
\Lambda_I(R[X])$ is always surjective with the kernel $\varprojlim_n^1
H_1(\Hom_R(T^\bu_n(R;s_1,\dotsc,s_m),R[X]))$ \cite[Lemma~7.5]{Pcta}.
 (See Example~\ref{comm-ring-ideal-good-projective-generator}(6)
below for further details.) {\hbadness=1200\par}

\smallskip

 (4)~In particular, all ideals in a Noetherian commutative ring are
weakly proregular.
 Thus we have obtained a new proof, based on
Proposition~\ref{fully-faithful-forgetful-functor-image-identified},
of the result of~\cite[Theorem~B.1.1]{Pweak}, according to which
the forgetful functor $\R\contra\rarrow R\modl$ is fully faithful
and its image coincides with the full subcategory
$R\modl_{I\ctra}\subset R\modl$ for any ideal $I$ in a Noetherian
commutative ring~$R$.

\smallskip

 (5)~In the special case of a principal ideal $I=(s)$ in a commutative
ring $R$, the weak proregularity condition mentioned in~(3) means that
the $s$\+torsion in $R$ is bounded.
 Thus the forgetful functor $\R\contra\rarrow R\modl_{I\ctra}$ is
an equivalence of abelian categories in this case
(cf.\ Example~\ref{accessible-additive-monads-examples}(7) above and
Example~\ref{comm-ring-ideal-good-projective-generator}(7) below).
\end{exs}

\begin{exs} \label{central-ideal-contramodules-examples}
 (1)~Let $R$ be an associative ring and $s\in R$ be a central element.
 A left $R$\+module $C$ is said to be an \emph{$s$\+contramodule} if
$\Hom_R(R[s^{-1}],C)=0=\Ext_R^1(R[s^{-1}],C)$ (notice that $R[s^{-1}]$
is a left $R$\+module of projective dimension~$\le\nobreak1$, so all
the higher Ext groups vanish automatically).
 More generally, for an arbitrary element $s\in R$, one can say that
a left $R$\+module $C$ is an $s$\+contramodule if, viewed as a module
over the polynomial ring $\boZ[s]$, it is an $s$\+contramodule in
the sense of Example~\ref{comm-ring-ideal-contramodules-examples}(1)
and~\cite[Section~2]{Pcta}, that is
$\Hom_{\boZ[s]}(\boZ[s,s^{-1}],C)=0=\Ext_{\boZ[s]}^1(\boZ[s,s^{-1}],C)$.

 Let $s_1$,~\dots, $s_m\in R$ be a finite set of central elements
in~$R$, and let $s=\sum_{j=1}^ms_mx_m$ be an element of the ideal
$I\subset R$ generated by~$s_1$,~\dots,~$s_m$.
 Then any left $R$\+module $C$ that is an $s_j$\+contramodule for every
$j=1$,~\dots,~$m$, is also an $s$\+contramodule.
 One can prove this assertion by constructing the $s$\+power infinite
summation operation in~$C$ \cite[Section~3]{Pcta} by the rule
$$
 \sum\nolimits_{n=0}^\infty s^nc_n =
 \sum\nolimits_{n_1=0}^\infty s_1^{n_1}\dotsb
 \sum\nolimits_{n_m=0}^\infty s_m^{n_m}
 (p_{n_1,\dotsc,n_m}(x_1,\dotsc,x_m)c_{n_1+\dotsb+n_m}),
$$
for any elements $c_0$, $c_1$, $c_2$,~\dots~$\in C$, where
$p_{n_1,\dotsc,n_m}$ is the appropriate noncommutative polynomial of
polydegree $(n_1,\dots,n_m)$ in the variables~$x_1$,~\dots,~$x_m$.
 Then it remains to apply~\cite[Theorem~3.3(c)]{Pcta}
(cf.~\cite[first proof of Theorem~5.1]{Pcta}).

 It follows that the property of a left $R$\+module $C$ to be
an $s_j$\+contramodule for every element~$s_j$ of a given set of
central generators of an ideal $I\subset R$ depends only on
the centrally generated ideal $I$ and not on the chosen set of
its central generators.
 A left $R$\+module $C$ with this property is called
an \emph{$I$\+contramodule}.
 The full subcategory of $I$\+contramodule left $R$\+modules
$R\modl_{I\ctra}\subset R\modl$ is closed under the kernels, cokernels,
extensions, and infinite products in $R\modl$.
 So, in particular $R\modl_{I\ctra}$ is an abelian category and its
embedding $R\modl_{I\ctra}\rarrow R\modl$ is an exact functor.

\smallskip

 (2)~Let $R$ be an associative ring and $I\subset R$ be the ideal
generated by a finite set of central elements $s_1$,~\dots, $s_m\in R$.
 As in Example~\ref{comm-ring-ideal-contramodules-examples}(1), denote
by $E$ the left $R$\+module $\bigoplus_{j=1}^m R[s_j^{-1}]$, and
let $f\:U^{-1}\rarrow U^0$ be a two-term free resolution of
the left $R$\+module~$E$.
 Then the full subcategory of $I$\+contramodule $R$\+modules
$R\modl_{I\ctra}\subset R\modl$ coincides with the full subcategory
$\sB=f^\perp\subset R\modl$ from
Example~\ref{accessible-additive-monads-examples}(4).
 In particular, the embedding functor $R\modl_{I\ctra}\rarrow R\modl$
has a left adjoint functor $\Delta_I\:R\modl\rarrow R\modl_{I\ctra}$.
 The abelian category $R\modl_{I\ctra}$ is equivalent to the category
of modules over the additive, $\aleph_1$\+accessible monad
$\boT_{R,I}=\boT_f\:\Sets\rarrow\Sets$, assigning to every set $X$
the underlying set of the left $R$\+module
$\boT_{R,I}(X)=\Delta_I(R[X])$.

\smallskip

 (3)~Let $R$ be an associative ring and $I\subset R$ be an ideal
generated by a finite set of central elements.
 As in Example~\ref{comm-ring-ideal-contramodules-examples}(2),
denote by $\R=\varprojlim_n R/I^n$ the $I$\+adic completion of
the ring $R$ endowed with the $I$\+adic (\,$=$~projective limit)
topology.
 Consider the abelian category of left $\R$\+contramodules $\R\contra$
defined in Example~\ref{accessible-additive-monads-examples}(2).
 Then the forgetful functor $\R\contra\rarrow R\modl$ is fully
faithful (see Examples~\ref{central-adic-topological-ring}(2\+3) below),
and its image is contained in the full subcategory of $I$\+contramodule
left $R$\+modules $R\modl_{I\ctra}\subset R\modl$.
 Thus the abelian category $\R\contra$ is a full subcategory in
the abelian category $R\modl_{I\ctra}$. 

 As in Example~\ref{comm-ring-ideal-contramodules-examples}(3),
denote by $\Lambda_I$ the $I$\+adic completion functor
$M\longmapsto \Lambda_I(M)=\varprojlim_n M/I^nM$.
 Then for any left $R$\+module $M$ the left $R$\+module
$\Lambda_I(M)$ is an $I$\+contramodule, hence there exists
a unique left $R$\+module morphism $\Delta_I(M)\rarrow\Lambda_I(M)$
forming a commutative diagram with the natural morphisms
$M\rarrow\Delta_I(M)$ and $M\rarrow\Lambda_I(M)$.
 By Proposition~\ref{fully-faithful-forgetful-functor-image-identified},
it follows that the forgetful functor $\R\contra\rarrow R\modl_{I\ctra}$
is an equivalence of categories if and only if the natural morphism
$\Delta_I(R[X])\rarrow\Lambda_I(R[X])$ is an isomorphism for every
set~$X$.

 By~\cite[Lemma~7.5]{Pcta}, the morphism $\Delta_I(R[X])\rarrow
\Lambda_I(R[X])$ is surjective with the kernel $\varprojlim_n^1
H_1(\Hom_R(T^\bu_n(R;s_1,\dotsc,s_m),R[X]))$, where $s_1$,~\dots, $s_m$
is some finite set of central generators of the ideal $I\subset R$.
 Hence the morphism in question is an isomorphism whenever
the $R$\+$R$\+bimodules $H_1(\Hom_R(T_n(R;s_1,\dotsc,s_m),R))$, \
$n\ge1$, form a pro-zero projective system.
 In particular, for a principal centrally generated ideal $I=(s)$,
where $s$~is a central element in $R$, the latter condition means
that the $s$\+torsion in $R$ should be bounded.
 (See Example~\ref{central-ideal-good-projective-generator}(4)
below for further details.)

\smallskip

 (4)~Arguing as in~\cite[Lemma~2.4]{Sch} or~\cite[Theorem~4.24]{PSY}
(see also~\cite[Lemma~2.3]{Pmgm}), one shows that the projective system
$H_1(\Hom_R(T_n(R;s_1,\dotsc,s_m),R))$ is pro-zero if and only if
for every injective left $R$\+module $J$ the augmented \v Cech complex
$\check C^\bu_\s(J)\sptilde$ has no cohomology in cohomological
degree~$1$, or equivalently, for every injective right $R$\+module $J$
the complex $\check C^\bu_\s(J)\sptilde$ has no cohomology in degree~$1$
(where $\s$ is a shorhand notation for the sequence
$s_1$,~\dots,~$s_m$).

 Set $X=\Spec S$, where $S$ is the subring in $R$ generated by
$s_1$,~\dots, $s_m$ over~$\boZ$.
 Suppose that the ring $R$ is left Noetherian.
 Then $X$ is a Noetherian affine scheme and the quasi-coherent algebra
$\cR$ over $X$ corresponding to the $S$\+algebra $R$ is (left)
Noetherian in the sense of~\cite[Section~1.2]{EP}.
 Let $\cJ$ be the quasi-coherent left $\cR$\+module associated with
an injective left $R$\+module~$J$.
 Denote by $Z$ the closed subscheme in $X$ defined by the equations
$\{s_1=0$, \dots, $s_m=0\}$, and set $U=X\setminus Z$.
 According to~\cite[Theorem~A.3]{EP}, the quasi-coherent left
$\cR|_U$\+module $\cJ|_U$ is injective and $\cJ$ is flasque sheaf of
abelian groups on~$X$.
 Arguing as in~\cite[Section~1]{Pmgm}, one can conclude that
$H^i(\check C^\bu_\s(J)\sptilde)=0$ for all $i>0$.

 Thus the forgetful functor $\R\contra\rarrow R\modl_{I\ctra}$ is
an equivalence of abelian categories whenever the ring $R$ is either
left or right Noetherian.
 We have obtained a new proof, based on
Proposition~\ref{fully-faithful-forgetful-functor-image-identified},
of the result of~\cite[Theorem~C.5.1]{Pcosh}.
\end{exs} 

 Given an object $N$ in an abelian category $\sA$, we denote by
$\pd_\sA N\in\{-\infty\}\cup\boZ_{\ge0}\cup\{+\infty\}$ the projective
dimension of the object $N\in\sA$.
 The projective dimension of a left module $N$ over an associative
ring $R$ is denoted by $\pd_RN$. 

\begin{exs} \label{comm-ring-mult-subset-contramodules-examples}
 (1)~Let $R$ be a commutative ring and $S\subset R$ be a multiplicative
subset.
 Assume that the projective dimension of the $R$\+module $S^{-1}R$ does
not exceed~$1$.
 An $R$\+module $C$ is said to be an \emph{$S$\+contramodule} if
$\Hom_R(S^{-1}R,C)=0=\Ext_R^1(S^{-1}R,C)$ \cite[Section~1]{PMat}.

 Let $f\:U^{-1}\rarrow U^0$ be a two-term free resolution of
the $R$\+module $S^{-1}R$.
 Then the full subcategory of $S$\+contramodule $R$\+modules
$R\modl_{S\ctra}\subset R\modl$ coincides with the full subcategory
$\sB=f^\perp\subset R\modl$ discussed in
Example~\ref{accessible-additive-monads-examples}(4).
 In particular, $R\modl_{S\ctra}$ is an abelian category with an exact
embedding functor $R\modl_{S\ctra}\rarrow R\modl$, which has a left
adjoint functor $\Delta_S\:R\modl\rarrow R\modl_{S\ctra}$.

 The latter functor can be computed as $\Delta_S(M)=
\Ext^1_R(K^\bu,M)=\Hom_{\sD^\b(R\modl)}(K^\bu,\allowbreak M[1])$ for every
$R$\+module~$M$, where $K^\bu$ denotes the two-term complex
$R\rarrow S^{-1}R$ with the term~$R$ placed in the cohomological
degree~$0$ and the term $S^{-1}R$ placed in the cohomological
degree~$1$\, \cite[Theorem~3.4]{PMat}.
 The abelian category $R\modl_{S\ctra}$ is equivalent to the category
of modules over the additive, accessible monad $\boT_{R,S}=\boT_f$ on
the category of sets, assigning to every set $X$ the underlying set of
the $R$\+module $\boT_{R,S}(X)=\Delta_S(R[X])$. {\hfuzz=2.8pt\par}

\smallskip

 (2)~Let $R$ be a commutative ring and $S\subset R$ be a multiplicative
subset.
 Denote by $\R=\varprojlim_{s\in S}R/sR$ the $S$\+completion of the ring
$R$, endowed with the projective limit topology~\cite[Section~2]{PMat}.
 Then $\R$ is a complete, separated topological commutative ring with
open ideals forming a base of neighborhoods of zero.
 So we can consider the abelian category of $\R$\+contramodules
$\R\contra$ defined in
Example~\ref{accessible-additive-monads-examples}(2).
 The conditions under which the forgetful functor
$\R\contra\rarrow R\modl$ is fully faithful are discussed
in Examples~\ref{central-mult-subset-topological-ring} below.

 Assume that $\pd_RS^{-1}R\le1$.
 Then the image of the forgetful functor $\R\contra\rarrow R\modl$ is
contained in the full subcategory of $S$\+contramodule $R$\+modules
$R\modl_{S\ctra}\allowbreak\subset R\modl$ discussed in~(1).
 Indeed, it suffices to check that the free $\R$\+contramodules are
$S$\+contramodule $R$\+modules, as every $\R$\+contramodule is
the cokernel of a morphism of free $\R$\+contramodules.
 Now, for any set $X$, the free $\R$\+contramodule $\R[[X]]=
\varprojlim_{s\in S} R/sR[X]$ is an $S$\+contramodule
$R$\+module~\cite[Lemma~2.1(a)]{PMat}.

\smallskip

 (3)~Let $R$ be a commutative ring and $S\subset R$ be a multiplicative
subset such that $\pd_RS^{-1}R\le1$.
 Denote by $\Lambda_S$ the $S$\+completion functor assigning to
every $R$\+module $M$ the $R$\+module $\Lambda_S(M)=
\varprojlim_{s\in S}M/sM$.
 Then for any $R$\+module $M$ there exists a unique $R$\+module morphism
$\Delta_S(M)\rarrow\Lambda_S(M)$ forming a commutative diagram with
the natural morphisms $M\rarrow\Delta_S(M)$ and $M\rarrow\Lambda_S(M)$
\cite[Lemma~2.1(b)]{PMat}.
 Applying
Proposition~\ref{fully-faithful-forgetful-functor-image-identified},
we can conclude that the forgetful functor $\R\contra\rarrow
R\modl_{S\ctra}$ is an equivalence of abelian categories if and only if
the natural morphism $\Delta_S(R[X])\rarrow\Lambda_S(R[X])$ is
an isomorphism for every set~$X$.

 According to~\cite[Theorem~2.5(c) or Corollary~2.7]{PMat}, the latter
condition holds when the $S$\+torsion in the ring $R$ is bounded
(i.~e., there exists an element $t\in S$ such that $sr=0$, \,$s\in S$,
\,$r\in R$ implies $tr=0$).
 Hence the forgetful functor $\R\contra\allowbreak\rarrow
R\modl_{S\ctra}$ is an equivalence of categories for any commutative ring
$S$ with a multiplicative subset $S$ such that $\pd_RS^{-1}R\le1$ and
the $S$\+torsion in $R$ is bounded.
 Furthermore, all the $S$\+contramodule $R$\+modules are
$\Delta_S(R)$\+modules, so it suffices to require the $S$\+torsion in
$\Delta_S(R)$, rather than in $R$, to be bounded (see
Examples~\ref{comm-ring-mult-subset-good-projective-generator}
for further details.)
\end{exs}

\begin{exs} \label{central-mult-subset-contramodules-examples}
 (1)~Let $R$ be an associative ring and $S\subset R$ be a multiplicative
subset consisting of central elements.
 Assume that the projective dimension of the left $R$\+module $S^{-1}R$
does not exceed~$1$.
 A left $R$\+module $C$ is said to be an \emph{$S$\+contramodule} if
$\Hom_R(S^{-1}R,C)=0=\Ext_R(S^{-1}R,C)$.

 Let $f\:U^{-1}\rarrow U^0$ be a two-term free resolution of the left
$R$\+module $S^{-1}R$.
 Then the full subcategory of $S$\+contramodule left $R$\+modules
$R\modl_{S\ctra}\subset R\modl$ coincides with the full subcategory
$\sB=f^\perp\subset R\modl$ from
Example~\ref{accessible-additive-monads-examples}(4).
 In particular, $R\modl_{S\ctra}$ is an abelian category with an exact
embedding functor $R\modl_{S\ctra}\rarrow R\modl$, which has a left
adjoint functor $\Delta_S\:R\modl\rarrow R\modl_{S\ctra}$.

 In the same way as in the commutative case of~\cite[Theorem~3.4]{PMat}
(as mentioned in
Example~\ref{comm-ring-mult-subset-contramodules-examples}(1)),
one shows that the latter functor can be computed as
$\Delta_S(M)=\Ext_R^1(K^\bu,M)=\Hom_{\sD^\b(R\modl)}(K^\bu,M[1])$ for
every left $R$\+module $M$, where $K^\bu$ denotes the two-term
complex $R\rarrow S^{-1}R$ (see~\cite[Proposition~3.2(b)]{BP2} for
a more general result).
 The abelian category $R\modl_{S\ctra}$ is equivalent to the category
of modules over the additive, accessible monad $\boT_{R,S}=\boT_f$
on the category of sets, assigning to every set $X$ the underlying
set of the left $R$\+module $\boT_{R,S}(X)=\Delta_S(R[X])$.

\smallskip

 (2)~Let $R$ be an associative ring and $S\subset R$ be a multiplicative
subset consisting of central elements.
 Denote by $\R$ the ring $\varprojlim_{s\in S}R/sR$, endowed with
the projective limit topology.
 Then $\R$ is a complete, separated topological associative ring with
open two-sided ideals forming a base of neighborhoods of zero.
 So we can consider the abelian category of left $\R$\+contramodules
$\R\contra$ defined in
Example~\ref{accessible-additive-monads-examples}(2).
 (See Examples~\ref{central-mult-subset-topological-ring} below for 
a discussion of the conditions under which the forgetful functor
$\R\contra\rarrow R\modl$ is fully faithful.)

 Assume that $\pd_RS^{-1}R\le1$.
 Then the image of the forgetful functor $\R\contra\rarrow R\modl$ is
contained in the full subcategory of $S$\+contramodule $R$\+modules
$R\modl_{S\ctra}\allowbreak\subset R\modl$.
 Indeed, as in
Example~\ref{comm-ring-mult-subset-contramodules-examples}(2),
it suffices to check that the free left $\R$\+contramodules
$\R[[X]]=\varprojlim_{s\in S}R/sR[X]$ are $S$\+contramodule
left $R$\+modules, and, as in~\cite[Lemma~2.1(a)]{PMat}, it suffices
to notice that a left $R$\+module annihilated by an element $s\in S$
is always an $S$\+contramodule, and the class of $S$\+contramodule
left $R$\+modules is closed under projective limits.

\smallskip

 (3)~Let $R$ be an associative ring and $S\subset R$ be a multiplicative
subset of central elements such that $\pd_RS^{-1}R\le1$.
 Denote by $\Lambda_S$ the $S$\+completion functor assigning to every
left $R$\+module $M$ the left $R$\+module $\Lambda_S(M)=
\varprojlim_{s\in S}M/sM$.
 Then for any left $R$\+module $M$ the left $R$\+module $\Lambda_S(M)$
is an $S$\+contramodule, hence there exists a unique left $R$\+module
morphism $\Delta_S(M)\rarrow\Lambda_S(M)$ forming a commutative
diagram with the natural morphisms $M\rarrow\Delta_S(M)$ and $M\rarrow
\Lambda_S(M)$.
 Applying
Proposition~\ref{fully-faithful-forgetful-functor-image-identified},
we once again conclude that the forgetful functor $\R\contra\rarrow
R\modl_{S\ctra}$ is an equivalence of abelian categories if and only if
the natural morphism $\Delta_S(R[X])\rarrow\Lambda_S(R[X])$ is
an isomorphism for every set~$X$.

 Let $Z\subset R$ be a central subring containing~$S$.
 Then both the functors $\Delta_S$ and $\Lambda_S$ computed in
the categories of $Z$\+modules and left $R$\+modules agree (since
the two-term complex $(R\to S^{-1}R)$ is isomorphic to the tensor
product $R\ot_Z(Z\to S^{-1}Z)$, which coincides with the derived
tensor product $R\ot_Z^\boL(Z\to S^{-1}Z)$).
 Notice that we do \emph{not} know what the projective dimension
of the $Z$\+module $S^{-1}Z$ might be.
 But nevertheless it follows from~\cite[Theorem~2.5(c)]{PMat} applied to
the ring $Z$ with the multiplicative subset $S\subset Z$ that
the morphism $\Delta_S(R[X])\rarrow\Lambda_S(R[X])$ is an isomorphism
for every set $X$ provided that the $S$\+torsion in $R$ is bounded.

 Thus the forgetful functor $\R\contra\rarrow R\modl_{S\ctra}$ is
an equivalence of abelian categories for any associative ring $R$ with
a multiplicative subset of central elements $S$ such that
$\pd_SS^{-1}R\le1$ and the $S$\+torsion in $R$ is bounded.
 Furthermore, all the $S$\+contramodule $R$\+modules are modules over
the ring $\Delta_S(R)$, so it suffices to require the $S$\+torsion in
$\Delta_S(R)$, rather than in $R$, to be bounded (see
Examples~\ref{central-mult-subset-good-projective-generator}
for further details.)
\end{exs}

\Section{Full-and-Faithfulness for Contramodules over Topological Rings}
\label{topological-rings-secn}

 In this section, we are interested in modules over the monads
$\boT=\boT_\R$ associated with topological associative rings~$\R$,
as defined in Example~\ref{accessible-additive-monads-examples}(2).
 We refer to~\cite[Remark~A.3]{Psemi}, \cite[Section~1.2]{Pweak},
\cite[Section~2.1]{Prev}, \cite[Section~5]{PR}, \cite[Section~6]{PS},
or~\cite[Section~2.7]{Pcoun} for further and more detailed discussions
of the definition of a contramodule over a topological ring
(see also~\cite[Section~D.5.2]{Psemi}, \cite[Section~1.10]{Pweak}
or~\cite[Section~2.3]{Prev} for comparison with the definition of
a contramodule over a topological associative algebra over a field).

 Let $\R$ be a complete, separated topological associative ring with
a base of neighborhoods of zero formed by open right ideals.
 As in Section~\ref{examples-of-fully-faithful-forgetful-secn},
we observe that the datum of an associative ring $R$ together with
an associative ring homomorphism $\theta\:R\rarrow\R$ defines
an exact forgetful functor $\R\contra\rarrow R\modl$.

 Let $\J\subset\R$ be a right ideal.
 We will say that a finite set of elements $s_1$,~\dots, $s_m\in\J$
\emph{strongly generates} the right ideal $\J$ if for every family of
elements $r_x\in\J$, indexed by some set $X$ and converging to zero
in the topology of~$\R$, there exist families of elements
$r_{j,x}\in\R$, \ $j=1$,~\dots,~$m$, each of them indexed by the set $X$
and converging to zero in the topology of $\R$, such that
$r_x=\sum_{j=1}^ms_jr_{j,x}$ for all $x\in X$.
 Since any finite family of elements in $\R$ converges to zero in
the topology of $\R$, any finite set of elements of a right ideal
$\J\subset\R$ strongly generating the ideal $\J$ also generates
the right ideal $\J$ in the conventional sense
(cf.~\cite[Section~B.4]{Pweak}).

 The following theorem is a generalization of~\cite[Theorem~2.1]{Psm},
and its proof is similar to that in~\cite{Psm} (see
Examples~\ref{conilpotent-coalgebra} below for a discussion).

\begin{thm} \label{topol-ring-fully-faithful-theorem}
 Let\/ $\R$ be a complete, separated topological associative ring,
$R$ be an associative ring, and\/ $\theta\:R\rarrow\R$ be
a ring homomorphism with a dense image.
 Assume that\/ $\R$ has a countable base of neighborhoods of zero
consisting of open two-sided ideals, each of which, viewed as a right
ideal, is strongly generated by a finite set of elements lying in
the image of the map\/~$\theta$.
 Then the forgetful functor\/ $\R\contra\rarrow R\modl$ is
fully faithful.
\end{thm}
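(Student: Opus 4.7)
My plan is to reduce the theorem to a single vanishing claim, then establish it by an iterated strong-generation decomposition combined with the contramodule structure of the target.

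\textbf{Reduction.} The forgetful functor $U\colon\R\contra\to R\modl$ is exact, so full-and-faithfulness propagates from free contramodules to arbitrary contramodules, since every $\R$-contramodule is a cokernel of a morphism between free ones. For a free $\R$-contramodule $\R[[Y]]$ and any $\R$-contramodule $C$, the contramodule restriction map $\Hom_{\R\contra}(\R[[Y]],C)\to C^Y$ is already bijective by the universal property of free contramodules, so it suffices to show that every $R$-linear map $h\colon\R[[Y]]\to C$ satisfying $h(y)=0$ for all $y\in Y$ is identically zero.

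\textbf{Iterated decomposition.} Fix a descending countable base $\I_1\supset\I_2\supset\dotsb$ of open two-sided ideals, with finite strong generators $s_j^{(n)}=\theta(\sigma_j^{(n)})\in\I_n$, $j=1,\dotsc,m_n$, $\sigma_j^{(n)}\in R$. Given $u=\sum_y r_yy\in\R[[Y]]$, I would first use density of $\theta(R)$ to split each coefficient as $r_y=\theta(a_y)+r_y'$ with $a_y\in R$ nonzero for only finitely many $y$ and $r_y'\in\I_1$, then apply strong generation to write $r_y'=\sum_{k_1}s_{k_1}^{(1)}r_{k_1,y}^{(1)}$ with each family $(r_{k_1,y}^{(1)})_y$ zero-convergent in $\R$, and iterate on each $v_{k_1}^{(1)}=\sum_y r_{k_1,y}^{(1)}y$ at the next level. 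This produces, for every multi-index $\vec k=(k_1,\dotsc,k_n)$ in $T_n=\prod_{i=1}^n\{1,\dotsc,m_i\}$, an element $v_{\vec k}^{(n)}\in\R[[Y]]$ with coefficients in $\I_n$, together with a ``finite part'' $v_{\vec k}^{\mathrm{fin},(n)}$ lying in the image of $R[Y]\hookrightarrow\R[[Y]]$, satisfying $v_{\vec k}^{(n)}=v_{\vec k}^{\mathrm{fin},(n)}+\sum_{k_{n+1}}s_{k_{n+1}}^{(n+1)}v_{\vec k,k_{n+1}}^{(n+1)}$. Applying $h$, using $R$-linearity, and noting that $h$ kills the image of $R[Y]$ (being $R$-linear and vanishing on $Y$), the finite parts contribute zero and one obtains, for every $N\ge1$, the equality $h(u)=\sum_{\vec k\in T_N}s_{\vec k}\,h(v_{\vec k}^{(N)})$ in $C$, with $s_{\vec k}=s_{k_1}^{(1)}\dotsb s_{k_N}^{(N)}\in\I_N$ acting on $C$ through the $\R$-contramodule structure.

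\textbf{Contramodule assembly and main obstacle.} To conclude that $h(u)=0$, I would combine these finite-level identities into a single contramodule identity in $C$. Set $W=\bigsqcup_{n\ge0}T_n$; the family $(s_{\vec k})_{\vec k\in W}$ is zero-convergent in $\R$ because each $T_n$ is finite and $s_{\vec k}\in\I_{|\vec k|}$. Unravelling the iteration, $u$ itself is recovered as the contramodule sum $\pi_{\R[[Y]]}(\R[[\mathbf v]](\bar{\mathbf s}))$, where $\bar{\mathbf s}=\sum_{\vec k}s_{\vec k}\vec k\in\R[[W]]$ and $\mathbf v\colon W\to\R[[Y]]$ sends $\vec k$ to $v_{\vec k}^{\mathrm{fin},(|\vec k|)}$ (a coefficient-wise check reduces this to the convergence, for each $y\in Y$, of the series $\sum_{\vec k}s_{\vec k}\theta(a_{\vec k,y}^{(|\vec k|)})=r_y$ in $\R$). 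On the other hand, since $h\circ\mathbf v\equiv0$, the corresponding contramodule sum in $C$ is trivially $\pi_C(\R[[h\circ\mathbf v]](\bar{\mathbf s}))=\pi_C(0)=0$. The decisive step, and the main obstacle, is to verify the compatibility $h\circ\pi_{\R[[Y]]}=\pi_C\circ\R[[h]]$ restricted to those elements of $\R[[\R[[Y]]]]$ supported on the image of $R[Y]$: although $h$ is merely $R$-linear and not a contramodule morphism, the tree structure of the decomposition coupled with the countability of the base and strong generation by elements of $\theta(R)$ should make this special case go through. This is the topological-ring generalization of the argument for~\cite[Theorem~1.1]{Psm}.
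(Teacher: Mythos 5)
Your reduction to a free source and your iterated strong-generation decomposition are sound, and they recover, in a more elaborate form, the first half of the paper's argument: since each $s_{\vec k}=\theta(\sigma^{(1)}_{k_1})\dotsm\theta(\sigma^{(N)}_{k_N})$ lies in $\theta(R)\cap\I_N$, your finite-level identity shows that $h(u)$ lies in $I_NC$ for every $N$, where $I_N=\theta^{-1}(\I_N)$. But the step you yourself flag as ``the main obstacle'' is a genuine gap, and the route you sketch for closing it cannot work as stated. The compatibility $h\circ\pi_{\R[[Y]]}=\pi_C\circ\R[[h]]$ on the elements you need is precisely the assertion that $h$ commutes with the relevant infinite summation operations, i.~e., that $h$ is an $\R$\+contramodule morphism on that part of its domain---which is what the theorem is asking you to prove, so no appeal to ``the tree structure of the decomposition'' can substitute for an argument here. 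Moreover, the conclusion your finite-level identities actually deliver, namely $h(u)\in\bigcap_N(\I_N\tim C)$ (in the notation $\I\tim C$ for the image of $\I[[C]]\rarrow C$), does \emph{not} force $h(u)=0$: contramodules are complete but in general \emph{not} separated with respect to the filtration by the subgroups $\I_N\tim C$, so this intersection can be nonzero, and no continuity or passage-to-the-limit argument is available for a map $h$ that is merely $R$\+linear.

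The missing idea is to apply the contramodule Nakayama lemma (\cite[Lemma~D.1.2]{Pcosh} or \cite[Lemma~6.14]{PR}) not to the single element $h(u)$ but to a whole subcontramodule. Replace $C$ by the $\R$\+subcontramodule $C'\subset C$ generated by $h(\R[[Y]])$. For each $\I=\I_N$ in the base, density of $\theta(R)$ gives $\R[[Y]]/I\R[[Y]]=\R/\I[Y]=R/I[Y]$, which is generated over $R/I$ by the image of $Y$; since $h$ is $R$\+linear and kills $Y$, its image lands in $IC'$. Strong generation of $\I$ as a right ideal by $s_1,\dotsc,s_m\in\theta(R)$ yields $\I\tim C'=s_1C'+\dotsb+s_mC'=IC'$, and this subgroup is an $\R$\+subcontramodule of $C'$ because $\I$ is also a left ideal. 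As $C'$ is generated as a contramodule by $h(\R[[Y]])\subset\I\tim C'$, one concludes $C'=\I\tim C'$ for every $\I$ in the countable base, whence $C'=0$ by the Nakayama lemma and $h=0$. This single application of Nakayama replaces your entire ``contramodule assembly'' step, and it is the point where the countability of the base and the two-sidedness of the ideals are actually consumed; I would encourage you to keep your decomposition only up to depth one, since iterating it buys nothing once the Nakayama argument is in place.
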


\begin{proof}
 Given a set $X$ and a complete, separated topological abelian group
$\A$ with a base of neighborhoods of zero formed by open subgroups
$\U\subset\A$, denote by $\A[[X]]$ the abelian group
$\varprojlim_{\U\subset\A}\A/\U[X]$ of all infinite formal linear
combinations of elements of $X$ with the coefficients converging to
zero in the topology of~$\A$.
 Following the notation in~\cite[Section~D.1]{Pcosh},
\cite[Sections~5\+-6]{PR}, \cite[Section~7.3]{PS},
and~\cite[Section~2.8]{Pcoun}, for any closed subgroup $\A\subset\R$
and any left $\R$\+contramodule $B$, we denote by $\A\tim B\subset B$
the image of the composition $\A[[B]]\rarrow B$ of the natural
embedding $\A[[B]]\rarrow\R[[B]]$ and the contraaction map
$\pi_B\:\R[[B]]\rarrow B$.
 The map $\A[[B]]\rarrow B$ is an abelian group homomorphism,
so $\A\tim B$ is a subgroup in~$B$.

 As usually, for any left $\R$\+module $M$ and any subgroup
$A\subset\R$, we denote by $AM\subset M$ the subgroup generated by
the products $am$, where $a\in A$ and $m\in M$.
 So we have $\A B\subset\A\tim B$ for any closed subgroup $\A\subset\R$
and any left $\R$\+contramodule~$B$.
 For any left ideal $I\subset\R$ and any left $\R$\+module $M$,
the subgroup $IM\subset M$ is a left $\R$\+submodule in~$M$.
 For any closed left ideal $\I\subset\R$ and any left $\R$\+contramodule
$B$, the subgroup $\I\tim B\subset B$ is a left $\R$\+subcontramodule
in~$B$.
 On the other hand, when a closed right ideal $\J\subset\R$ is strongly
generated by a finite set of elements $s_1$,~\dots, $s_m\in\J$, one has
$\J\tim B=\J B=s_1B+\dotsb+s_mB$ for any left $\R$\+contramodule~$B$.

 Let $B$ and $C$ be two left $\R$\+contramodules, and let
$f\:B\rarrow C$ be a left $R$\+module morphism.
 The contraaction map $\pi_B\:\R[[B]]\rarrow B$ is a surjective
morphism of left $\R$\+contramodules.
 In order to show that $f$~is an $\R$\+contramodule morphism, it
suffices to check that the composition $\R[[B]]\rarrow B\rarrow C$
is an $\R$\+contramodule morphism.
 Hence we can replace $B$ with $\R[[B]]$ and suppose that $B=\R[[X]]$
is a free left $\R$\+contramodule generated by a set~$X$.

 Then the composition $X\rarrow C$ of the natural embedding $X\rarrow
\R[[X]]$ with a left $R$\+module morphism $f\:\R[[X]]\rarrow C$ can be
extended uniquely to a left $\R$\+contramodule morphism
$f'\:\R[[X]]\rarrow C$.
 Setting $g=f-f'$, we have a left $R$\+module morphism $g\:\R[[X]]
\rarrow C$ taking the elements of the set $X$ to zero elements in~$C$.
 We have to show that $g=0$.
 Without loss of generality, we can assume that the left
$\R$\+contramodule $C$ is generated by its subset $g(\R[[X]])\subset C$
(otherwise, replace $C$ with its subcontramodule generated by
$g(\R[[X]])$).
 Then we have to show that $C=0$.

 Let $\I\subset\R$ be an open two-sided ideal strongly generated, as
a right ideal, by a finite set of elements $s_1$,~\dots, $s_m\in\I$
belonging to the image of~$\theta$.
 Denote by $I\subset R$ the full preimage of the ideal $\I$ with respect
to ring homomorphism~$\theta$; so $I$ is a two-sided ideal in~$R$
and $R/I\simeq\R/\I$ (since the image of~$\theta$ is dense in~$\R$).
 Then we have $\I[[X]]=\I\tim\R[[X]]=s_1\R[[X]]+\dotsb+s_m\R[[X]]=
I\R[[X]]$ and $\I\tim C=s_1C+\dotsb+s_mC=IC$.

 Now the induced left $(R/I)$\+module morphism $g/I\:\R[[X]]/I\R[[X]]
\rarrow C/IC$ vanishes, because the left $(R/I)$\+module
$\R[[X]]/I\R[[X]]=\R[[X]]/\I[[X]]=\R/\I[X]\allowbreak=R/I[X]$ is
generated by elements from~$X$.
 So the image of the morphism~$g$ is contained in~$IC$.
 Since the left $\R$\+contramodule $C$ is generated by $g(\R[[X]])$
and $IC=\I\tim C$ is a left $\R$\+subcontramodule in $C$, it
follows that $C=IC$.

 We have shown that $C=\I\tim C$ for a countable set of open two-sided
ideals $\I\subset\R$ forming a base of neighborhoods of zero in~$\R$.
 According to the contramodule Nakayama lemma
(\cite[Lemma~D.1.2]{Pcosh} or~\cite[Lemma~6.14]{PR}),
it follows that $C=0$.
\end{proof}

 A generalization of Theorem~\ref{topol-ring-fully-faithful-theorem}
to complete, separated topological associative rings $\R$ with
a countable base of neighborhoods of zero consisting of open
\emph{right} ideals can be found in~\cite[Theorem~6.2]{Pcoun}.

 We will explain below in 
Theorem~\ref{ext-isomorphism-right-perpendicular-thm}(a)
and Example~\ref{topological-ring-0-good}(b)
how to distinguish the objects of the full subcategory $\R\contra$
among the objects of the ambient category $R\modl$ in
the context of Theorem~\ref{topol-ring-fully-faithful-theorem}.

 In the rest of this section we list some examples of topological rings
$\R$ together with ring homomorphisms $R\rarrow\R$ into $\R$ from
a discrete ring $R$ for which one can show that the forgetful functor
$\R\contra\rarrow R\modl$ is fully faithful using
Theorem~\ref{topol-ring-fully-faithful-theorem} (or parts of
the argument from its proof).
 We also provide a certain nonexample and a certain counterexample.

\begin{rem} \label{topological-quotient-ring-remark}
 Let $\R$ be a complete, separated topological associative ring with
a countable base of neighborhoods of zero consisting of open right
ideals.
 Let $\K\subset\R$ be a closed two-sided ideal and $\S=\R/\K$ be
the quotient ring endowed with the quotient topology.
 Then $\S$ is also a complete, separated topological associative ring
with a countable base of neighborhoods of zero consisting of open
right ideals, and the functor of restriction of scalars
$\S\contra\rarrow\R\contra$ is fully faithful.
 Indeed, any family of elements of $\S$ converging to zero in
the topology of $\S$ can be lifted to a family of elements of $\R$
converging to zero in the topology of~$\R$.

 Moreover, if a homomorphism of associative rings $\theta\:R\rarrow\R$
satisfies the assumptions of
Theorem~\ref{topol-ring-fully-faithful-theorem}, then so does
the composition $\bar\theta\:R\rarrow\S$ of the homomorphism~$\theta$
with the natural surjective homomorphism $\R\rarrow\S$.
 Indeed, for any open right/two-sided ideal $\I\subset\R$, one can
consider the open right/two-sided ideal $\J=(\I+\K)/\K\subset\S$.
 Then any family of elements of $\J$ converging to zero in
the topology of $\S$ can be lifted to a family of elements of $\I$
converging to zero in the topology of $\R$, hence the image of any
finite set of elements strongly generating the right ideal
$\I\subset\R$ strongly generated the right ideal $\J\subset\S$.
\end{rem}

\begin{exs} \label{noncommutative-power-series}
 (1)~Let $k$ be a commutative ring and $\R=k\{\{x_1,\dotsc,x_m\}\}$ be
the $k$\+algebra of noncommutative formal Taylor power series in
the variables~$x_1$,~\dots, $x_m$ with the coefficients in~$k$,
endowed with the formal power series topology (or, in other words,
the $\I$\+adic topology for the ideal $\I=(x_1,\dotsc,x_m)\subset\R$).
 We observe that the ideal $\I^n\subset\R$ of all the formal power
series with vanishing coefficients at all the noncommutative monomials
of the total degree less than~$n$ in~$x_1$,~\dots, $x_m$ is strongly
generated, as a left ideal in $\R$, by the finite set of all
the noncommutative monomials of the total degree~$n$
in~$x_1$,~\dots,~$x_m$.
 Denoting by $R=k\{x_1,\dotsc,x_m\}$ the $k$\+algebra of noncommutative
polynomials in~$x_1$,~\dots, $x_m$ and by $\theta\:R\rarrow\R$
the natural embedding, we conclude, by applying
Theorem~\ref{topol-ring-fully-faithful-theorem}, that the forgetful
functor $\R\contra\rarrow R\modl$ is fully faithful.

\smallskip

 (2)~More generally, let $R$ be a quotient algebra of the algebra
$k\{x_1,\dotsc,x_m\}$ of noncommutative polynomials in
the variables~$x_1$,~\dots, $x_m$ over a commutative ring~$k$ by
a two-sided ideal $K\subset k\{x_1,\dotsc,x_m\}$.
 Let $\R=\varprojlim_n R/I^n$ be the adic completion of the algebra $R$
with respect to the two-sided ideal $I=(x_1,\dotsc,x_m)\subset\R$,
endowed with the projective limit topology (\,$=$~$I$\+adic topology
of the left or right $R$\+module~$\R$).
 Then $\R$ is the topological quotient ring of the algebra of
noncommutative formal power series $k\{\{x_1,\dotsc,x_m\}\}$ by
the closure $\K$ of the image of the ideal $K\subset k\{x_1,\dotsc,x_m\}$
in $k\{\{x_1,\dotsc,x_m\}\}$.
 In view of Remark~\ref{topological-quotient-ring-remark},
the forgetful functor $\R\contra\rarrow R\modl$ is fully faithful.

\smallskip

 (3)~Even more generally, let $\R$ be the quotient ring of
the topological algebra of noncommutative formal power series
$k\{\{x_1,\dotsc,x_m\}\}$ by a closed two-sided ideal
$\K\subset k\{\{x_1,\dotsc,x_m\}\}$, endowed with the quotient
topology.
 Then the forgetful functor $\R\contra\rarrow k\{x_1,\dots,x_m\}\modl$
is fully faithful.

\smallskip

 (4)~One can also drop the commutatity assumption on the ring~$k$,
presuming only that the elements of~$k$ commute with
the variables~$x_1$,~\dots, $x_m$ (while the variables do not commute
with each other and the elements of~$k$ do not necessarily commute
with each other).
 All the assertions of~(1\+-3) remain valid in this setting.
\end{exs}

\begin{exs} \label{conilpotent-coalgebra}
 (1)~Let $k$ be a field and $\C$ be a coassociative, counital
$k$\+coalgebra.
 Then the dual vector space $\C\spcheck$ to the $k$\+vector space $\C$
has a natural topological $k$\+algebra structure.
 The category $\C\contra$ of left contramodules over the coalgebra
$\C$ is isomorphic to the category $\C\spcheck\contra$ of left
contramodules over the topological algebra~$\C\spcheck$
\cite[Section~1.10]{Pweak}, \cite[Section~2.3]{Prev}.

\smallskip

 (2)~In particular, when $\C$ is a conilpotent coalgebra over~$k$
with a finite dimensional cohomology space $H^1(\C)$,
the topological algebra $\C\spcheck$ is a topological quotient algebra
of the algebra of noncommutative formal power series
$k\{\{x_1,\dotsc,x_m\}\}$, where $m=\dim_kH^1(\C)$, by a closed
two-sided ideal, as in Example~\ref{noncommutative-power-series}(3).
 This allows to recover the result of~\cite[Thereom~2.1]{Psm}
as a particular case of our
Theorem~\ref{topol-ring-fully-faithful-theorem}.
 (A change of variables may be needed in order to ensure that
an arbitrary dense subalgebra $R\subset\C\spcheck$ contains 
the images of the generators $x_j\in k\{\{x_1,\dotsc,x_m\}\}$.)

\smallskip

 (3)~Let $\C=k\oplus V\oplus k$ be the coalgebra
from~\cite[Section~A.1.2]{Psemi} for which the category of left
$\C$\+contramodules is equivalent to the category of pairs of
$k$\+vector spaces $P_1$ and $P_2$ endowed with a $k$\+linear map
$\Hom_k(V,P_1)\rarrow P_2$.
 The category of left $\C\spcheck$\+modules is equivalent to
the category of pairs of $k$\+vector spaces $M_1$ and $M_2$
endowed with a $k$\+linear map $V\spcheck\ot_k M_1\rarrow M_2$.
 Then the forgetful functor $\C\contra\rarrow\C\spcheck\modl$ is
clearly \emph{not} fully faithful when $V$ is infinite-dimensional.

 This example shows that the finite generatedness condition on
the ideals in the topological ring $\R$ in
Theorem~\ref{topol-ring-fully-faithful-theorem} cannot be readily
replaced with a countable generatedness condition.
\end{exs}

\begin{ex}
 To give another example in which
Theorem~\ref{topol-ring-fully-faithful-theorem} is
\emph{not} applicable, let us consider contramodules over
the Virasoro Lie algebra~$\Vir$.
 The Virasoro Lie algebra is the topological vector space
$\Vir=k((z))d/dz\oplus kC$ of vector fields with coefficients in
the field of Laurent power series in one variable~$z$ over a field~$k$
of characteristic~$0$, extended by adding a one-dimensional vector
space $kC$ as a second direct summand.
 The vectors $L_n=z^{n+1}d/dz$, \ $n\in\boZ$, and $C$ form a topological
basis in~$\Vir$, and the Lie bracket is defined by the formula
$$
 [L_i,L_j]=(j-i)L_{i+j}+\delta_{i,\,-j}\frac{i^3-i}{12}C, \qquad
 [L_i,C]=0, \qquad i,\,j\in\boZ,
$$
where $\delta$ is the Kronecker symbol.

 A \emph{$\Vir$\+contramodule} $P$ is a $k$\+vector space endowed with
a linear operator $C\:P\rarrow P$ and an infinite summation operation
assigning to every sequence of vectors $p_{-N}$, $p_{-N+1}$,
$p_{-N+2}$,~\dots~$\in P$, \ $N\in\boZ$, a vector denoted formally by
$\sum_{i=-N}^\infty L_ip_i\in P$.
 The equations of agreement
$$
 \sum\nolimits_{i=-N}^\infty L_ip_i=\sum\nolimits_{i=-M}^\infty L_ip_i
 \qquad\text{when\, $-N<-M$\, and \,$p_{-N}=\dotsb=p_{-M+1}=0$},
$$
linearity
$$
 \sum\nolimits_{i=-N}^\infty L_i(ap_i+bq_i)=
 a\sum\nolimits_{i=-N}^\infty L_ip_i +
 b\sum\nolimits_{i=-N}^\infty L_iq_i
 \qquad\forall\, a,\,b\in k,\, \ p_i,\, q_i\in P,
$$
centrality of~$C$
$$
 C\sum\nolimits_{i=-N}^\infty L_ip_i=\sum\nolimits_{i=-N}^\infty L_i(Cp_i),
$$
and contra-Jacobi identity
\begin{multline*}
 \sum\nolimits_{i=-N}^\infty L_i
 \left(\sum\nolimits_{j=-M}^\infty L_jp_{ij}\right)
 - \sum\nolimits_{j=-M}^\infty L_j
 \left(\sum\nolimits_{i=-N}^\infty L_ip_{ij}\right)
 \\ 
 =\,\sum\nolimits_{n=-N-M}^\infty L_n\left
 (\sum\nolimits_{i+j=n}^{i\ge-N,\>j\ge-M}(j-i)p_{ij}\right)
 +C\sum\nolimits_{i+j=0}^{i\ge-N,\>j\ge-M}
 \left(\frac{i^3-i}{12}p_{ij}\right)
\end{multline*}
have to be satisfied~\cite[Section~D.2.7]{Psemi},
\cite[Section~1.7]{Prev}.

 We do \emph{not} know whether the forgetful functor $\Vir\contra
\rarrow\Vir\modl$ from the abelian category of $\Vir$\+contramodules
to the abelian category of modules over the Lie algebra $\Vir$ is
fully faithful.
 Perhaps a more natural question would be about full-and-faithfulness
of the forgetful functor $\Vir\contra\rarrow \mathrm{Vir}\modl$ from
the category of $\Vir$\+contramodules to the category of modules over
a discrete version $\mathrm{Vir}=k[z,z^{-1}]d/dz\oplus kC\subset\Vir$ of
Virasoro Lie algebra.
 We do \emph{not} know whether this functor is fully faithful.
 In other words, we do not know whether the above-described infinite
summation operation with the coefficients $L_i$ in a vector space~$P$
can be uniquely recovered from the action of the linear operators
$C$ and $L_i$ in~$P$.

 It would be sufficient to show that the forgetful functor
$\Vir_+\contra\rarrow\mathrm{Vir}_+\modl$ is fully faithful for
the positively-graded subalgebras $\Vir_+=z^2k[[z]]d/dz\subset\Vir$
and $\mathrm{Vir}_+=z^2k[z]d/dz\subset\mathrm{Vir}$ of
(the topological and discrete versions of) the Virasoro Lie algebra.
 This would mean recovering an infinite summation operation
$(p_i)_{i>0}\longmapsto\sum_{i>0}L_ip_i$ with the coefficients
$L_i$, \,$i>0$, satisfying the equations similar to the above, from
the linear operators $L_i\:P\rarrow P$.
 But even though the Lie algebra $\mathrm{Vir}_+$ is generated by two
elements $L_1$ and $L_2$, the relevant completion of the enveloping
algebra $U(\mathrm{Vir}_+)$ \cite[Sections~D.5.1\+-3]{Psemi} is
\emph{not} a topological quotient algebra of the algebra of
noncommutative formal power series $k\{\{L_1,L_2\}\}$, so this example
does not reduce to Example~\ref{noncommutative-power-series}(3).
 The assumptions of Theorem~\ref{topol-ring-fully-faithful-theorem} are
not satisfied for the topological enveloping algebra of~$\Vir_+$
(still less of~$\Vir$).
 The topological enveloping algebra $U\sphat\,(\Vir)$ does not even
have a base of neighborhoods of zero consisting of two-sided open
ideals, and the two-sided open ideals in the topological enveloping
algebra $U\sphat\,(\Vir_+)$ are not strongly finitely generated
as right ideals.
\end{ex}

\begin{exs} \label{central-adic-topological-ring}
 (1)~Let $R$ be an associative ring with a two-sided ideal $I\subset R$.
Consider the associated graded ring $\gr_IR=\bigoplus_{n=0}^\infty
I^n/I^{n+1}$ and the $I$\+adic completion $\R=\varprojlim_n R/I^n$
(viewed as a topological ring in the projective limit topology).
 Assume that the ideal $\gr_II=\bigoplus_{n=1}^\infty I^n/I^{n+1}$ in
the graded ring $\gr_IR$ is generated by a finite set of central
elements $\bar s_1$,~\dots, $\bar s_m$ of grading~$1$.

 Choose some liftings $s_1$,~\dots, $s_m\in I$ of the elements
$\bar s_1$,~\dots, $\bar s_m\in I/I^2$.
 Then the open two-sided ideal $\I_n=\varprojlim_i I^n/I^{n+i}$
in the topological ring $\R$, viewed as a right ideal, is strongly
generated by the images of the monomials of degree~$n$ in
the elements~$s_1$,~\dots,~$s_m$.
 According to Theorem~\ref{topol-ring-fully-faithful-theorem},
it follows that the forgetful functor $\R\contra\rarrow R\modl$
is fully faithful.

\smallskip

 (2)~In particular, let $R$ be an associative ring and $I\subset R$
be the ideal generated by a finite set of central elements
$s_1$,~\dots, $s_m\in R$.
 Let $\R=\varprojlim_n R/I^n$ be the $I$\+adic completion of the ring
$R$, endowed with the projective limit (\,$=$~$I$\+adic) topology.
 Then the forgetful functor $\R\contra\rarrow R\modl$ is fully
faithful.
 This assertion was mentioned in
Examples~\ref{comm-ring-ideal-contramodules-examples}(2)
and~\ref{central-ideal-contramodules-examples}(3) above.

\smallskip

 (3)~Here is an alternative argument, not based on
Theorem~\ref{topol-ring-fully-faithful-theorem}, proving
the assertion in~(2).
 Let us show that the exact forgetful functor $\R\contra\rarrow
R\modl_{I\ctra}$ is fully faithful.
 The abelian category $\R\contra$ is the category of modules over
the monad on $\Sets$ assigning to a set $X$ the set $\R[[X]]=
\Lambda_I(R[X])$, while the abelian category $R\modl_{I\ctra}$ is
the category of modules over the monad assigning to a set $X$
the set $\Delta_I(R[X])$.
 The functor $\R\contra\rarrow R\modl_{I\ctra}$ is induced by
the morphism of monads $\Delta_I(R[X])\rarrow\Lambda_I(R[X])$, and
surjectivity of this map for every set $X$ immediately implies
that this functor is fully faithful.

\smallskip

 (4)~It may be worth noting that, while $\Delta_I\:R\modl\rarrow
R\modl_{I\ctra}$ is the left adjoint functor to the exact embedding
$R\modl_{I\ctra}\rarrow R\modl$, the left adjoint functor to
the exact embedding $\R\contra\rarrow R\modl$ can be computed
as the 0\+th left derived functor $\boL_0\Lambda_I$ of the $I$\+adic
completion functor $\Lambda_I\:R\modl\rarrow R\modl$.
 The functor $\Lambda_I$ itself is neither left, nor right exact.
 Its derived functor was considered in~\cite[Section~3]{PSY}.
 (Indeed, both the reflector onto $\R\contra$ in $R\modl$ and
$\boL_0\Lambda_I$ are right exact and take the left $R$\+module
$R[X]$ to the left $\R$\+contramodule $\R[[X]]$.)

 For any left $R$\+module $M$, there are natural surjective
$R$\+module morphisms $\Delta_I(M)\rarrow\boL_0\Lambda_I(M)\rarrow
\Lambda_I(M)$.
 We refer to Examples~\ref{comm-ring-ideal-good-projective-generator}(6)
and~\ref{central-ideal-good-projective-generator}(4) below for further
discussion.
\end{exs}

\begin{exs} \label{central-mult-subset-topological-ring}
 (1)~Let $R$ be an associative ring and $S\subset R$ be a multiplicative
subset consisting of some central elements in~$R$.
 Let $\R=\varprojlim_{s\in S}R/sR$ denote the $S$\+completion of
the ring $R$, viewed as a topological ring in the projective limit
topology.
 Assume that the projective limit topology of $\R$ coincides with
the $S$\+topology of $R$\+module $\R$, and moreover, assume that
for every set $X$ the projective limit topology of the free
$\R$\+contramodule $\R[[X]]=\varprojlim_{s\in S}R/sR[X]$ coincides
with the $S$\+topology of the $R$\+module $\R[[X]]$
(cf.~\cite[Theorem~2.3]{PMat}).

 The latter condition can be expressed by saying that for any
$X$\+indexed family of elements $r_x\in\R$, converging to zero in
the topology of $\R$ and belonging to the kernel ideal of the natural
ring homomorphism $\R\rarrow R/sR$, there exists an $X$\+indexed
family of elements $t_x\in\R$, converging to zero in the topology
of $\R$, such that $r_x=st_x$ for all $x\in X$.
 In other words, it means that the kernel ideal of the ring
homomorphism $\R\rarrow R/sR$ is strongly generated by (the image
in $\R$ of) an element~$s$, for every $s\in S$.

 Following the proof of Theorem~\ref{topol-ring-fully-faithful-theorem},
we would be able to conclude that the forgetful functor
$\R\contra\rarrow R\modl$ is fully faithful if we knew that,
for every left $\R$\+contramodule $C$, the equations $C=sC$ for all
$s\in S$ imply $C=0$.
 This condition holds whenever every $S$\+divisible left $R$\+module
is $S$\+h-divisible (see the discussion in~\cite[Section~1]{PMat}
and the references therein).

 To sum up, the forgetful functor $\R\contra\rarrow R\modl$ is 
fully faithful whenever the $S$\+completion of the free left
$R$\+module $R[X]$ is $S$\+complete for every set $X$ and all
the $S$\+divisible left $R$\+modules are $S$\+h-divisible.
 Notice that, unlike in
Examples~\ref{comm-ring-mult-subset-contramodules-examples}
and~\ref{central-mult-subset-contramodules-examples} above,
no condition on the projective dimension of the $R$\+module $S^{-1}R$
was needed for our present discussion.

\smallskip

 (2)~In particular, for any countable multiplicative subset $S$
consisting of some central elements in an associative ring $R$,
the forgetful functor $\R\contra\rarrow R\modl$ is fully faithful
(see~\cite[Proposition~2.2]{PMat}).
 In this case, the category $\R\contra$ is a full subcategory
in $R\modl_{S\ctra}$ (see
Examples~\ref{comm-ring-mult-subset-contramodules-examples}(2)
and~\ref{central-mult-subset-contramodules-examples}(2),
and~\cite[Lemma~1.9]{PMat}).

 While $\Delta_S$ is the left adjoint functor to the exact embedding
$R\modl_{S\ctra}\rarrow R\modl$, the left adjoint functor to the exact
embedding $\R\contra\rarrow R\modl$ can be computed as the $0$\+th
left derived functor $\boL_0\Lambda_S\:R\modl\rarrow\R\contra$ of
the $S$\+completion functor~$\Lambda_S$.
 For any left $R$\+module $M$, there are natural surjective left
$R$\+module morphisms $\Delta_S(M)\rarrow\boL_0\Lambda_S(M)\rarrow
\Lambda_S(M)$.
 We refer to
Examples~\ref{comm-ring-mult-subset-good-projective-generator}(2)
and~\ref{central-mult-subset-good-projective-generator}(2) below
for further discussion.
\end{exs}

\Section{Good Projective Generators and Perpendicular Subcategories}
\label{perpendicular-subcategories-secn}

 The definition of the perpendicular subcategory in~\cite[Section~1]{GL}
has many obvious versions and generalizations.
 In the next series of definitions, we list the most important ones
in our context.

 Let $\sA$ be an abelian category and $\sB\subset\sA$ be a full
subcategory.
 We will say that $\sB$ is a \emph{right\/ $0$\+perpendicular
subcategory} in $\sA$ if there exists a class of morphisms $\sF$ in
the category $\sA$ such that 
\begin{itemize}
\item an object $B\in\sA$ belongs to $\sB$ if and only if for
every morphism $f\in\sF$, \ $f\:U'\rarrow U''$, the morphism of abelian
groups $\Hom_\sA(f,B)\:\Hom_\sA(U'',B)\allowbreak\rarrow\Hom_\sA(U',B)$
is an isomorphism.
\end{itemize}
 In this case, we will say that $\sB$ is the right $0$\+perpendicular
subcategory to the class of morphisms $\sF$ in $\sA$ and write
$\sB=\sF^{\perp_0}\subset\sA$.

 Furthermore, we will say that $\sB$ is a \emph{right\/
$1$\+perpendicular subcategory} in $\sA$ if there exists a class of
objects $\sE$ in the category $\sA$ such that 
\begin{itemize}
\item an object $B\in\sA$ belongs to $\sB$ if and only if for
every object $E\in\sE$ one has $\Hom_\sA(E,B)=0=\Ext_\sA^1(E,B)$.
\end{itemize}
 In this case, we will say that $\sB$ is the right $1$\+perpendicular
subcategory to the class of objects $\sE$ in $\sA$ and write
$\sB=\sE^{\perp_{0,1}}\subset\sA$.

 Let $n\ge1$ be an integer.
 We will say that a full subcategory $\sB\subset\sA$ is a \emph{right\/
$n$\+perpendicular subcategory} in $\sA$ if there exists a class of
objects $\sE$ in the category $\sA$ such that 
\begin{itemize}
\item an object $B\in\sA$ belongs to $\sB$ if and only if for
every object $E\in\sE$ one has $\Hom_\sA(E,B)=0=\Ext_\sA^1(E,B)$, and
\item at the same time, for any objects $E\in\sE$ and $B\in\sB$ one has
$\Ext_\sA^i(E,B)=0$ for all $0\le i\le n$.
\end{itemize}
 So an $n$\+perpendicular subcategory is a $1$\+perpendicular
subcategory satisfying an additional condition.
 We will say that $\sB$ is the right $n$\+perpendicular subcategory to
the class of objects $\sE$ in $\sA$ and write
$\sB=\sE^{\perp_{0,1}}=\sE^{\perp_{0..n}}$.

 Finally, we will say that a full subcategory $\sB\subset\sA$ is
a \emph{right\/ $\infty$\+perpendicular subcategory} in $\sA$ if
there exists a class of objects $\sE\subset\sA$ such that
\begin{itemize}
\item an object $B\in\sA$ belongs to $\sB$ if and only if for
every object $E\in\sE$ one has $\Hom_\sA(E,B)=0=\Ext_\sA^1(E,B)$, and
\item for any objects $E\in\sE$ and $B\in\sB$, one has
$\Ext_\sA^i(E,B)=0$ for all $i\ge0$.
\end{itemize}
 In this case, we will say that $\sB$ is the right
$\infty$\+perpendicular subcategory to the class of objects $\sE$ in
$\sA$ and write $\sB=\sE^{\perp_{0,1}}=\sE^{\perp_{0..\infty}}$.

\begin{lem} \label{n+1-perpendicular-implies-n}
\textup{(a)} For any abelian category\/ $\sA$ and any integer $n\ge0$,
every right $(n+1)$\+perpendicular subcategory in $\sA$ is at the same
time a right $n$\+perpendicular subcategory in\/~$\sA$. \par
\textup{(b)} For any locally presentable abelian category\/ $\sA$ and
any integer $n\ge0$, every right $(n+1)$\+perpendicular subcategory
to a set of objects in\/ $\sA$ is at same same time a right
$n$\+perpendicular subcategory to a set of objects/morphisms in\/~$\sA$.
\end{lem}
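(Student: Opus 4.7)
The plan for part~(a) is straightforward when $n \ge 1$: the very class of objects $\sE$ witnessing the $(n+1)$\+perpendicular property of $\sB$ also witnesses the $n$\+perpendicular property, since $\Ext_\sA^i(E,B) = 0$ for $0 \le i \le n+1$ immediately implies the same vanishing for $0 \le i \le n$, while the defining condition $\sB = \sE^{\perp_{0,1}}$ does not change.

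The substantive case of part~(a) is $n = 0$: given $\sB = \sE^{\perp_{0,1}}$, I must produce a class of morphisms $\sF$ with $\sB = \sF^{\perp_0}$. My choice would be to let $\sF$ consist of all monomorphisms $f \: K \hookrightarrow X$ in $\sA$ whose cokernel $X/K$ belongs to $\sE$. Applying $\Hom_\sA(-,B)$ to $0 \to K \to X \to X/K \to 0$ shows that $\Hom_\sA(f,B)$ is an isomorphism if and only if $\Hom_\sA(X/K,B) = 0$ (injectivity) and the connecting map $\Hom_\sA(K,B) \to \Ext_\sA^1(X/K,B)$ vanishes (surjectivity). The first condition, ranged over all $f \in \sF$, is exactly $\Hom$\+orthogonality to $\sE$. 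For the second, given any $E \in \sE$ and any $\xi \in \Ext_\sA^1(E,B)$ represented by an extension $0 \to B \to X \to E \to 0$, the associated inclusion $B \hookrightarrow X$ lies in $\sF$, and its connecting map carries $\id_B \in \Hom_\sA(B,B)$ to $\xi$; vanishing forces $\xi = 0$. Hence $\sF^{\perp_0} = \sE^{\perp_{0,1}} = \sB$.

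For part~(b), the case $n \ge 1$ is handled by taking the same set $\sE$. For $n = 0$, I would cut the class $\sF$ of part~(a) down to a set by using presentability: choose a regular cardinal $\lambda$ large enough that $\sA$ is locally $\lambda$\+presentable, every $E \in \sE$ is $\lambda$\+presentable, and the $\lambda$\+presentable objects of $\sA$ are closed under extensions; then let $\sF$ be a set of representatives of isomorphism classes of monomorphisms $K \hookrightarrow X$ between $\lambda$\+presentable objects with cokernel in $\sE$. The inclusion $\sE^{\perp_{0,1}} \subset \sF^{\perp_0}$ is as in part~(a). For the reverse, given $B \in \sF^{\perp_0}$, $E \in \sE$, and $\xi \in \Ext_\sA^1(E,B)$, I would use that $\Ext_\sA^1(E,-)$ commutes with $\lambda$\+directed colimits to realize $\xi$ as the image under some $B_\alpha \to B$ of a $\xi_\alpha \in \Ext_\sA^1(E,B_\alpha)$ with $B_\alpha$ \ $\lambda$\+presentable; represent $\xi_\alpha$ by $0 \to B_\alpha \to X_\alpha \to E \to 0$ with $X_\alpha$ \ $\lambda$\+presentable, and apply the argument of part~(a) to the corresponding $f \in \sF$.

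The main obstacle is the presentability lemma required for part~(b), namely that $\Ext_\sA^1(E,-)$ preserves $\lambda$\+directed colimits for $\lambda$\+presentable $E$ in a locally $\lambda$\+presentable abelian category, for sufficiently large $\lambda$. A cleaner alternative would bypass the explicit construction entirely by invoking the Ad\'amek--Rosick\'y characterization of accessibly embedded reflective subcategories of locally presentable categories as precisely the small orthogonality classes $\sF^{\perp_0}$: one verifies that $\sB$ is closed under limits in $\sA$ (routine, from the long exact sequence of $\Ext$) and under $\kappa$\+filtered colimits for some $\kappa$, the essential content being the same accessibility of $\Ext^1$.
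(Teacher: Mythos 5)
Your part~(a) is correct and coincides with the paper's own argument: for $n\ge1$ the witnessing class $\sE$ is simply reused, and for $n=0$ one takes $\sF$ to be the class of all monomorphisms with cokernel in $\sE$; the long-exact-sequence verification you spell out (injectivity of $\Hom_\sA(f,B)$ equals $\Hom$-orthogonality, surjectivity equals vanishing of the connecting map, applied in particular to the inclusion $B\to X$ of an arbitrary extension of $E$ by~$B$) is exactly what the paper leaves implicit.

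The gap is where you locate it, in part~(b) for $n=0$. Your set $\sF$ consists of $\sE$-monomorphisms \emph{between $\lambda$-presentable objects}, so to get $\sF^{\perp_0}\subseteq\sE^{\perp_{0,1}}$ you must present every class in $\Ext^1_\sA(E,B)$ as a pushout of an extension whose kernel term is $\lambda$-presentable, and this is precisely the unproved accessibility of $\Ext^1_\sA(E,{-})$. That statement is not automatic: in $R\modl$ with $\lambda=\aleph_0$ it holds only for $FP_2$-modules $E$, so one genuinely has to enlarge~$\lambda$, and in an arbitrary locally presentable abelian category (where no projective resolutions are available) proving it for large $\lambda$ amounts to essentially the same work as the lemma itself. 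Your Ad\'amek--Rosick\'y fallback reduces to the same accessibility question, and moreover leans on the implication (ii)$\Rightarrow$(i) of \cite[Theorem~1.39]{AR}, whose published proof is erroneous and which is valid only for uncountable cardinals --- a point the paper itself flags. The paper closes the gap by an asymmetric choice of $\sF$: the $\sE$-monomorphisms whose \emph{codomain} is $\kappa$-presentable, with no presentability condition imposed on the domain (citing \cite[Lemma~3.4]{PR}). With that choice, given $0\to B\to Y\to E\to0$ one only needs to approximate the middle term $Y$ by a $\kappa$-presentable $Y_i\to Y$ such that $Y_i\to E$ is still an epimorphism --- which uses only that $E$ is $\kappa$-generated and that $\kappa$-filtered colimits are well behaved --- and then $K_i=\ker(Y_i\to E)\to Y_i$ lies in $\sF$ whether or not $K_i$ is presentable; orthogonality to this single morphism splits the pushed-out extension, which is~$\xi$. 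So either prove the $\Ext^1$-accessibility lemma in full, or switch to the paper's choice of $\sF$, which avoids it.
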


\begin{proof}
 Both assertions are obvious for $n\ge1$, as any right
$(n+1)$\+perpendicular subcategory to a class of objects $\sE\subset\sA$
is, by the definition, at the same time a right $n$\+perpendicular
subcategory to the same class of objects $\sE\subset\sA$ for $n\ge1$.

 The nontrivial case is $n=0$.
 Part~(a): given a class of objects $\sE\subset\sA$, denote by $\sF$
the class of all $\sE$\+monomorphisms in $\sA$, i.~e., all
the monomorpisms in $\sA$ with the cokernels belonging to~$\sE$.
 Then $\sF^{\perp_0}=\sE^{\perp_{0,1}}$.
 Part~(b): given a set of $\kappa$\+presentable objects $\sE$ in
a locally $\kappa$\+presentable abelian category $\sA$ (where
$\kappa$~is a regular cardinal), denote by $\sF$ the class of all
$\sE$\+monomorphisms with $\kappa$\+presentable codomains in~$\sA$.
 Then $\sF^{\perp_0}=\sE^{\perp_{0,1}}$ \cite[Lemma~3.4]{PR}.

 In particular, in the simplest case when there are enough projective
objects in $\sA$, it suffices to pick an epimorphism from a projective
object $F_E\rarrow E$ onto every object $E\in\sE$.
 Let $G_E\rarrow F_E$ be the kernel of the morphism $F_E\rarrow E$;
then the class/set $\sF$ of all the morphisms $G_E\rarrow F_E$ has
the property that $\sF^{\perp_0}=\sE^{\perp_{0,1}}$.
\end{proof}

 Our definition of a right $0$\+perpendicular subcategory is 
(the abelian categories-related particular case of) what appears under
the name of an ``orthogonality class'' in the book~\cite{AR}
(where nonabelian, nonadditive categories are generally considered).
 What we would call ``the right $0$\+perpendicular subcategory to
a set of morphisms'' is called a ``small-orthogonality class''
in~\cite{AR}.

 The right $0$\+perpendicular subcategory to a single morphism between
\emph{free} left modules over an associative ring $R$ in the abelian
category $\sA=R\modl$ was discussed in
Example~\ref{accessible-additive-monads-examples}(4).
 Part~(d) of the next lemma generalizes some of the properties mentioned
there to the case of the right $0$\+perpendicular subcategory to
an arbitrary class of morphisms between projective objects in an abelian
category.

 What we call a right $1$\+perpendicular subcategory was called
simply a ``right perpendicular subcategory'' in~\cite{GL}.
 The case of the right perpendicular subcategory to a class of
objects of projective dimension~$\le1$ played a special role
in~\cite{GL} (and also in~\cite[Section~1]{Pcta}), where it was noticed
that such a subcategory $\sB\subset\sA$ is always abelian with an exact
embedding functor $\sB\rarrow\sA$.

 In fact, the right perpendicular subcategory to a class/set of objects
of projective dimension~$\le1$ (in the sense of~\cite{GL}) is
a right $n$\+perpendicular subcategory to the same class/set of objects 
for every $n\ge1$ (in the sense of our definitions).
 Part~(c) of the next lemma generalizes the related results
of~\cite[Proposition~1.1]{GL} and~\cite[Theorem~1.2]{Pcta} to
the case of right $2$\+perpendicular subcategories to arbitrary
classes of objects.

\begin{lem} \label{right-perpendicular-closure-properties}
 Let\/ $\sA$ be an abelian category.  Then \par
\textup{(a)} any right\/ $0$\+perpendicular subcategory\/
$\sB\subset\sA$ is closed under arbitrary limits, and in particular,
under infinite products and kernels in\/~$\sA$; \par
\textup{(b)} any right\/ $1$\+perpendicular subcategory\/
$\sB\subset\sA$ is closed under infinite products, kernels, and
extensions in\/~$\sA$; \par
\textup{(c)} any right\/ $2$\+perpendicular subcategory\/
$\sB\subset\sA$ is closed under infinite products, kernels, extensions,
and cokernels in\/~$\sA$.
 Hence any right\/ $2$\+perpendicular subcategory\/ $\sB\subset\sA$
is abelian and its embedding functor\/ $\sB\rarrow\sA$ is exact; \par
\textup{(d)} the right\/ $0$\+perpendicular subcategory\/
$\sB\subset\sA$ to any class of morphisms between projective objects
in $\sA$ is closed under infinite products, kernels, extensions, and
cokernels in\/~$\sA$.
 Hence any such full subcategory\/ $\sB\subset\sA$ is abelian
and its embedding functor\/ $\sB\rarrow\sA$ is exact. \par
\end{lem}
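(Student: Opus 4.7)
My plan is to verify each part by tracking the long exact sequences of $\Ext$ in $\sA$ associated with appropriate short exact sequences, together with a direct limit-preservation argument for part~(a). For~(a), the covariant functors $\Hom_\sA(U',-)$ and $\Hom_\sA(U'',-)$ preserve arbitrary limits (they are representable), so if $\sB=\sF^{\perp_0}$ and $(B_\alpha)$ is a diagram in $\sB$ with limit $B$ in $\sA$, then for every $f\in\sF$ the map $\Hom_\sA(f,B)$ is the limit of the isomorphisms $\Hom_\sA(f,B_\alpha)$ and hence an isomorphism itself; infinite products and kernels are special cases.

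For~(b), let $\sB=\sE^{\perp_{0,1}}$. Closure under products reduces to the identity $\Ext^1_\sA(E,\prod_i B_i)=\prod_i\Ext^1_\sA(E,B_i)$, which I will obtain via Yoneda extensions: any extension $0\rarrow\prod B_i\rarrow X\rarrow E\rarrow 0$ pulls back along each projection $\prod B_i\rarrow B_i$ to a split extension, and these splittings assemble into a splitting of the original by the universal property of the product. Closure under kernels: for $f\:B\rarrow B''$ in $\sB$ with $K=\ker f$ and $I=\im f$, the inclusion $I\hookrightarrow B''$ gives $\Hom_\sA(E,I)\hookrightarrow\Hom_\sA(E,B'')=0$, and the long exact sequence of $0\rarrow K\rarrow B\rarrow I\rarrow 0$ then yields $\Hom_\sA(E,K)=0=\Ext^1_\sA(E,K)$. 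Closure under extensions is immediate from the long exact sequence of $0\rarrow B'\rarrow X\rarrow B''\rarrow 0$.

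For~(c), the additional assumption is $\Ext^2_\sA(E,B)=0$ for all $B\in\sB$ and $E\in\sE$. Given $f\:B'\rarrow B$ in $\sB$ with kernel $K$, image $I$, and cokernel $C$, I proceed in three stages. First, $K\in\sB$ by~(b); the defining identification $\sE^{\perp_{0,1}}=\sE^{\perp_{0..2}}$ then upgrades this to $\Ext^2_\sA(E,K)=0$. Second, the long exact sequence of $0\rarrow K\rarrow B'\rarrow I\rarrow 0$, together with the vanishings $\Hom_\sA(E,B')=\Ext^1_\sA(E,B')=\Ext^2_\sA(E,K)=0$, gives $\Hom_\sA(E,I)=\Ext^1_\sA(E,I)=0$, so $I\in\sB$ and in particular $\Ext^2_\sA(E,I)=0$. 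Third, the long exact sequence of $0\rarrow I\rarrow B\rarrow C\rarrow 0$, together with $\Hom_\sA(E,B)=\Ext^1_\sA(E,B)=\Ext^1_\sA(E,I)=\Ext^2_\sA(E,I)=0$, yields $\Hom_\sA(E,C)=\Ext^1_\sA(E,C)=0$, whence $C\in\sE^{\perp_{0,1}}=\sB$. The delicate point that I expect to be the main obstacle is the repeated use of the identification $\sE^{\perp_{0,1}}=\sE^{\perp_{0..2}}$ to promote $\Hom$ and $\Ext^1$ vanishings on intermediate subobjects such as $K$ and $I$ into the $\Ext^2$ vanishings needed in the next long exact sequence.

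For~(d), I will show that any class $\sF$ of morphisms between projective objects in $\sA$ satisfies $\sF^{\perp_0}=\sE^{\perp_{0,1}}$ where $\sE=\{\coker f:f\in\sF\}$. Indeed, applying $\Hom_\sA(-,B)$ to the presentation $P'\rarrow P''\rarrow C_f\rarrow 0$ and using $\Ext^1_\sA(P'',B)=0$ yields a four-term exact sequence $0\rarrow\Hom_\sA(C_f,B)\rarrow\Hom_\sA(P'',B)\rarrow\Hom_\sA(P',B)\rarrow\Ext^1_\sA(C_f,B)\rarrow 0$, so $\Hom_\sA(f,B)$ is an isomorphism if and only if $\Hom_\sA(C_f,B)=\Ext^1_\sA(C_f,B)=0$. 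Since each $C_f$ has projective dimension at most~$1$, the higher $\Ext$ groups $\Ext^i_\sA(C_f,-)$ vanish for $i\ge 2$, hence $\sE^{\perp_{0,1}}=\sE^{\perp_{0..\infty}}$. In particular $\sB$ is a right $2$-perpendicular subcategory, and the closure properties together with abelianness and exactness of the embedding follow from~(c).
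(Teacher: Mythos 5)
Your arguments for parts~(a)--(c) are correct. For~(a) and~(b) the paper simply cites \cite{AR} and \cite{GL}, and your direct arguments (representability for limits, the splitting-of-pushed-forward-extensions argument for products, and the long exact sequences for kernels and extensions) are exactly the standard proofs behind those citations. For~(c), your two-stage long exact sequence computation --- first placing $K=\ker f$ in $\sB$, upgrading to $\Ext^2_\sA(E,K)=0$ via the defining identity $\sE^{\perp_{0,1}}=\sE^{\perp_{0..2}}$, then doing the same for $\im f$ and finally for $\coker f$ --- is an explicit version of the paper's reduction ``closed under kernels and under cokernels of monomorphisms implies closed under all cokernels,'' and it is sound.

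Part~(d), however, has a genuine gap. Your reduction rests on the claimed four-term exact sequence $0\rarrow\Hom_\sA(C_f,B)\rarrow\Hom_\sA(P'',B)\rarrow\Hom_\sA(P',B)\rarrow\Ext^1_\sA(C_f,B)\rarrow0$, which is valid only when $f\:P'\rarrow P''$ is a monomorphism, so that $P'\rarrow P''$ is the beginning of a projective resolution of $C_f$. The lemma allows \emph{arbitrary} morphisms between projectives. When $f$ is not monic, the cokernel of $\Hom_\sA(f,B)$ acquires an extra contribution from morphisms $P'\rarrow B$ that do not annihilate $\ker f$, and the identity $\sF^{\perp_0}=\sE^{\perp_{0,1}}$ with $\sE=\{\coker f\}$ fails. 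Concretely, in $\sA=\boZ\modl$ take $f\:\boZ^2\rarrow\boZ$ given by $(a,b)\longmapsto 2a$; then $\coker f=\boZ/2$ and $(\boZ/2)^{\perp_{0,1}}$ is the category of $\boZ[1/2]$\+modules, whereas $\{f\}^{\perp_0}$ consists of those $B$ for which the map $B\rarrow B^2$, \ $b\mapsto(2b,0)$, is an isomorphism, which forces $B=0$. (The conclusion of~(d) still holds in this example, but not by your route.) The paper's argument avoids the issue entirely: since $U'$ and $U''$ are projective, $B\longmapsto\Hom_\sA(f,B)$ is an exact, product-preserving functor from $\sA$ to the category of morphisms of abelian groups, and the full subcategory of isomorphisms there is closed under infinite products, kernels, extensions, and cokernels (a five-lemma/snake-lemma check); $\sB$ is the simultaneous preimage of this subcategory, hence inherits all four closure properties. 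To repair your proof you would either need to adopt this functorial argument or first replace each $f\in\sF$ by data accounting separately for $\ker f$, $\im f$, and $\coker f$.
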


\begin{proof}
 Part~(a) is~\cite[Observation~1.34]{AR}.
 Part~(b) is~\cite[Proposition~1.1]{GL}.
 In part~(c), one notices that any full subcategory in $\sA$ closed
under the kernels of all morphisms and the cokernels of monomorphisms
is also closed under the cokernels of all morphisms.
 Checking that $\sB=\sE^{\perp_{0,1}}=\sE^{\perp_{0..2}}$ is closed under
the cokernels of monomorphisms is easy.
 Part~(d) holds, because the full subcategory of isomorphisms in
the category of morphisms in the category of abelian groups (or in
any other abelian category generally) is closed under infinite
products, kernels, extensions, and cokernels
(see~\cite[proof of Theorem~1.2 and Remark~1.3]{Pcta} for
an alternative argument).
\end{proof}

\begin{ex} \label{sheaves-cosheaves-ex}
 The following notable examples of left and right perpendicular
subcategories serve to illustrate the above definitions rather well.
 Let $X$ be a scheme with the structure sheaf~$\O_X$.
 Consider the following preadditive category (or, which is the same,
a \emph{ring with several objects}~\cite{Mit})~$\cA_X$.
 The objects of $\cA_X$ are affine open subschemes $U\subset X$.
 For any two affine open subschemes $U$ and $V\subset X$, the group of
morphisms $V\rarrow U$ in $\cA_X$ is the group $\O_X(V)$ if $V\subset U$ 
and the zero group otherwise.
 The compositions of morphisms in $\cA_X$ are defined in
the obvious way.

 Then the category $\modr\cA_X$ of right $\cA_X$\+modules (or, which is
the same, contravariant functors from $\cA_X$ to the category of abelian 
groups $\boZ\modl$) is the category of presheaves of $\O_X$\+modules
on the affine open subschemes in~$X$.
 The category $\cA_X\modl$ of left $\cA_X$\+modules (\,$=$~covariant
functors from $\cA_X$ to $\boZ\modl$) is the category of copresheaves
of $\O_X$\+modules on the affine open subschemes in~$X$.

 Given an affine open subscheme $W\subset X$, denote by $\cP_W$
the left $\cA_X$\+module assigning to an affine open subscheme
$V\subset X$ the group $\cP_W(V)=\O_X(W)$ if $W\subset V$ and
$0$~otherwise.
 So $\cP_W$ is the covariant functor $\cA_X\rarrow\boZ\modl$
corepresented by the object $W$, or, in another language, the free
left $\cA_X$\+module with one generator sitting at the object~$W$.
 Furthermore, given two embedded affine open subschemes $W\subset U$
in $X$, denote by $\cE_{W,U}$ the left $\cA_X$\+module assigning to
an affine open subscheme $V\subset X$ the group $\cE_{W,U}(V)=
\O_X(W)$ if $W\subset V\varsubsetneq U$ and $0$~otherwise.
 Then there is a short exact sequence $0\rarrow \O_X(W)\ot_{\O_X(U)}
\cP_U\rarrow \cP_W\rarrow\cE_{W,U}\rarrow0$ in the abelian
category $\cA_X\modl$ (where the tensor product $\O_X(W)\ot_{\O_X(U)}
\cP_U$ assigns to an affine open subscheme $V\subset X$ the group
$\O_X(W)\ot_{\O_X(U)}\cP_U(V)$).

 Furthermore, denote by $\cJ_{W,U}$ the right $\cA_X$\+module
assigning to an affine open subscheme $V\subset X$ the abelian group
$\cJ_{W,U}(V)=\Hom_\boZ(\cE_{W,U}(V),\mathbb Q/\boZ)$.
 Then the left $\cA_X$\+module $\cE_{W,U}$ has projective dimension
at most~$2$ (as an object of the abelican category $\cA_X\modl$) and
flat dimension at most~$1$ (in the sense of the theory of the tensor
product functor $\ot_{\cA_X}$ and the derived tensor product functor
$\Tor_*^{\cA_X}$ over a preadditive category $\cA_X$, as developed,
e.~g., in~\cite[Section~6]{Mit}), while the right $\cA_X$\+module
$\cJ_{W,U}$ has injective dimension at most~$1$.

 Now the description of the category of quasi-coherent sheaves
$X\qcoh$ on a scheme $X$ suggested by Enochs and Estrada
in~\cite[Section~2]{EE} can be reformulated by saying that $X\qcoh$
is equivalent to the full subcategory in $\modr\cA_X$ consisting of
all the presheaves of $\O_X$\+modules $\cF$ on affine open subschemes
in $X$ such that $\cF\ot_{\cA_X}\cE_{W,U}=0=
\Tor^{\cA_X}_1(\cF,\cE_{W,U})$ for all affine open subschemes
$W\subset U$ in $X$, or equivalently, $\Hom_{\cA_X^\rop}(\cF,\cJ_{W,U})
=0=\Ext^1_{\cA_X^\rop}(\cF,\cJ_{W,U})$ for all $W\subset U\subset X$.
 Indeed, for any presheaf $\cF$ the groups $\cF\ot_{\cA_X}\cE_{W,U}$
and $\Tor^{\cA_X}_1(\cF,\cE_{W,U})$ are, respectively, the cokernel
and the kernel of the natural map $\O_X(W)\ot_{\O_X(U)}\cF(U)
\rarrow\cF(W)$.
 So, in other words, one can say that $X\qcoh$ is the left
$1$\+perpendicular (and $\infty$\+perpendicular) subcategory to
a set of objects of injective dimension~$\le\nobreak1$ in
$\modr\cA_X$.

 Similarly, the description of the category of contraherent cosheaves
$X\ctrh$ on a scheme $X$ in~\cite[Section~2.2]{Pcosh} can be
reformulated by saying that $X\ctrh$ is equivalent to the full
subcategory in $\cA_X\modl$ consisting of all the copresheaves of
$\O_X$\+modules $\cQ$ on affine open subschemes in $X$ such that
$\Ext_{\cA_X}^i(\cE_{W,U},\cQ)=0$ for all pairs of embedded affine
open subschemes $W\subset U$ in $X$ and all $0\le i\le 2$ (or,
equivalently, for all $i\ge0$).
 In fact, the contraherence condition on $\cQ$ means the vanishing of
$\Ext_{\cA_X}^i(\cE_{W,U},\cQ)$ for $i=0$ and~$1$, while
the contraadjustness condition is equivalent to this vanishing
for $i=2$.
 More precisely, for any copresheaf $\cQ$ the groups
$\Hom_{\cA_X}(\cE_{W,U},\cQ)$ and $\Ext^1_{\cA_X}(\cE_{W,U},\cQ)$ are, 
respectively, the kernel and the cokernel of the natural map
$\cQ(W)\rarrow\Hom_{\O_X(U)}(\O_X(W),\cQ(U))$, while the group
$\Ext_{\cA_X}^2(\cE_{W,U},\cQ)$ is isomorphic to
$\Ext_{\O_X(U)}^1(\O_X(W),\cQ(U))$.

 So, denoting by $\sA$ the abelian category $\cA_X\modl$ and by
$\sE$ the set of objects $\{\cE_{W,U}\}$, one can write that
$X\ctrh=\sE^{\perp_{0..2}}\varsubsetneq\sE^{\perp_{0,1}}$ in general.
 Hence it is clear that the full subcategory $X\ctrh\subset\cA_X\modl$
is closed under extensions, direct summands, and infinite products.
 But it is not closed under kernels or cokernels, in general (as
the example of $X=\Spec\boZ$ already demonstrates).
 Moreover, one can see that there are morphisms in the category of
contraherent cosheaves over $X=\Spec\boZ$ that do not have kernels
in $X\ctrh$ at all.
 Thus, for an arbitrary scheme $X$, the category $X\ctrh$ has a natural
exact category structure, but it is not an abelian category, in general
(while the category $X\qcoh$ is abelian).
\end{ex}

 Now we return to the discussion of right $n$\+perpendicular
subcategories $\sB$ in abelian categories~$\sA$.
 In the rest of this paper (with the exception of the last
Section~\ref{nonabelian-secn}, where counterexamples are discussed),
we are interested in the right $n$\+perpendicular subcategories
$\sB\subset\sA$ that are \emph{closed under cokernels}.
 According to Lemmas~\ref{n+1-perpendicular-implies-n}(a)
and~\ref{right-perpendicular-closure-properties}(a),
for any $n\ge0$, such a full subcategory $\sB$ is closed under
infinite products, kernels, and cokernels in $\sA$; so $\sB$ is
an abelian category and its embedding functor $\sB\rarrow\sA$ is exact.
 In other words, we are interested in the abelian, exactly
embedded right $n$\+perpendicular subcategories in abelian categories.
 According to Lemma~\ref{right-perpendicular-closure-properties}(c),
for $n\ge2$ this additional condition holds automatically.

\begin{lem} \label{perpendicular-to-set-loc-pres-and-reflective}
 Let\/ $\sA$ be a locally presentable abelian category and\/
$\sB\subset\sA$ be an abelian, exactly embedded right
$n$\+perpendicular subcategory to a set of objects or morphisms
in\/~$\sA$.
 Then\/ $\sB$ is a locally presentable abelian category, accessibly
embedded into\/ $\sA$ and reflective in\/~$\sA$.
\end{lem}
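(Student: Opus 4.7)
The plan is to reduce the statement to the case of a right $0$\+perpendicular subcategory to a set of morphisms, and then to invoke the standard theory of small-orthogonality classes from~\cite{AR}. Regardless of~$n$, I would first use Lemma~\ref{n+1-perpendicular-implies-n} to rewrite $\sB$ as $\sF^{\perp_0}$ for some set of morphisms $\sF$ in~$\sA$: for $n\ge1$ the hypothesis gives $\sB=\sE^{\perp_{0,1}}$ for some set of objects~$\sE$, and part~(b) of that lemma then produces a set of morphisms $\sF$ with $\sF^{\perp_0}=\sE^{\perp_{0,1}}=\sB$ (in a locally presentable category one can even take the codomains in~$\sF$ to be $\kappa$\+presentable, as in the proof of that lemma); for $n=0$ a set $\sF$ with $\sB=\sF^{\perp_0}$ is already part of the hypothesis.

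Next, I would choose a regular cardinal~$\lambda$ large enough that $\sA$ is locally $\lambda$\+presentable and that the domains and codomains of all the (set-many) morphisms in~$\sF$ are $\lambda$\+presentable objects of~$\sA$. Such a~$\lambda$ exists because every object of a locally presentable category is $\mu$\+presentable for some~$\mu$, and $\sF$ is a set. Then $\sB=\sF^{\perp_0}$ is closed under all limits in $\sA$ by Lemma~\ref{right-perpendicular-closure-properties}(a), and it is closed under $\lambda$\+filtered colimits in~$\sA$ as well: for a $\lambda$\+filtered system $(B_\alpha)$ in $\sB$ with colimit $B$ in~$\sA$, and for any $f\:U'\rarrow U''$ in~$\sF$, the map $\Hom_\sA(f,B)$ is identified with the colimit of the isomorphisms $\Hom_\sA(f,B_\alpha)$ by $\lambda$\+presentability of $U'$ and $U''$, and is therefore itself an isomorphism.

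Once $\sB$ has been exhibited as a full subcategory of the locally $\lambda$\+presentable category $\sA$ that is closed under limits and under $\lambda$\+filtered colimits, all three conclusions follow from~\cite[Theorem and Corollary~2.48]{AR}, the same reference already invoked for Example~\ref{accessible-additive-monads-examples}(4): $\sB$ is reflective in~$\sA$ and is itself locally presentable, and the embedding $\sB\hookrightarrow\sA$ is accessible because it preserves $\lambda$\+filtered colimits. The only nontrivial step is really the reduction in the first paragraph; after that, the argument is a direct application of locally presentable category theory, and in particular the abelian and exact-embedding hypotheses on~$\sB$ play no role in the proof of this lemma.
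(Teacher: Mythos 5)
Your proof is correct and follows essentially the same route as the paper's: reduce via Lemma~\ref{n+1-perpendicular-implies-n} to the case of a right $0$\+perpendicular subcategory to a set of morphisms between $\lambda$\+presentable objects, observe closure under limits and under $\lambda$\+filtered colimits, and conclude by~\cite[Theorem and Corollary~2.48]{AR}. Your closing remark is also accurate: the paper's argument likewise makes no use of the abelian or exact-embedding hypotheses beyond carrying the word ``abelian'' into the conclusion.
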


\begin{proof}
 In view of Lemma~\ref{n+1-perpendicular-implies-n}(b), it suffices
to consider the case of the right $0$\+perpendicular subcategory
$\sB=\sF^{\perp_0}$ to a set of morphisms $\sF$ in~$\sA$.
 Suppose that $\sA$ is locally $\kappa$\+presentable and $\sF$
consists of morphisms between $\kappa$\+presentable objects
(where $\kappa$~is a regular cardinal).
 Then the full subcategory $\sB$ is $\kappa$\+accessibly embedded
into $\sA$ (which means that it is closed under $\kappa$\+filtered
colimits in~$\sA$).
 As the full subcategory $\sB\subset\sA$ is also closed under
arbitrary limits, \cite[Theorem and Corollary~2.48]{AR} apply,
proving that $\sB$ is locally $\kappa$\+presentable and
reflective in~$\sA$.

 Alternatively, one can apply directly~\cite[Theorem~1.39]{AR}.
 (Notice that the proof of the implication (ii)\,$\Longrightarrow$\,(i)
in~\cite[Theorem~1.39]{AR} is erroneous, and the implication
itself is only true for uncountable cardinals, see~\cite{HAR,HR} and
the references therein; but we are not using this implication here.)
\end{proof}

 Assuming Vop\v enka's principle, any right $0$\+perpendicular
subcategory in a locally presentable abelian category is
the right $0$\+perpendicular subcategory to a set of
morphisms~\cite[Corollary~6.24]{AR}, so this condition can be dropped.

 The following lemma is a direct generalization of
Example~\ref{additive-monads-examples}(4).

\begin{lem} \label{reflective-projective-generator}
 Let\/ $\sA$ be an abelian category with a projective generator, and
let\/ $\sB$ be an abelian, exactly embedded, reflective full
subcategory in\/~$\sA$.
 Then\/ $\sB$ is also an abelian category with a projective generator.
\end{lem}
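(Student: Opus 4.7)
Let $P \in \sA$ be a projective generator, let $\iota\: \sB \rarrow \sA$ denote the embedding (which is exact and faithful by hypothesis), and let $\Delta\: \sA \rarrow \sB$ denote its left adjoint. The natural candidate for a projective generator of $\sB$ is $P' = \Delta(P)$; this is a direct generalization of the construction in Example~\ref{additive-monads-examples}(4), where $P = R$ and $\Delta(R)$ was identified as the projective generator of the reflective subcategory $\sB \subset R\modl$. The plan is simply to verify that $\Delta(P)$ has the two required properties, using nothing more than the adjunction and the exactness of~$\iota$.

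For projectivity, I would use the adjunction isomorphism $\Hom_\sB(\Delta(P), {-}) \simeq \Hom_\sA(P, \iota({-}))$ of functors $\sB \rarrow \Sets$ (or $\rarrow \mathsf{Ab}$). The functor $\iota$ is exact by hypothesis, and $\Hom_\sA(P, {-})$ is exact because $P$ is projective in $\sA$; hence their composition is exact, proving that $\Delta(P)$ is projective in~$\sB$.

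For the generator property, suppose $f\: B' \rarrow B''$ is a nonzero morphism in $\sB$. Since $\iota$ is faithful, $\iota(f)\: \iota(B') \rarrow \iota(B'')$ is a nonzero morphism in $\sA$; since $P$ is a generator of $\sA$, there exists a morphism $h\: P \rarrow \iota(B')$ in $\sA$ with $\iota(f) \circ h \ne 0$. Under the adjunction, $h$ corresponds to some $g\: \Delta(P) \rarrow B'$ in $\sB$ via $h = \iota(g) \circ \eta_P$, where $\eta_P\: P \rarrow \iota\Delta(P)$ is the unit. Then $\iota(f \circ g) \circ \eta_P = \iota(f) \circ \iota(g) \circ \eta_P = \iota(f) \circ h \ne 0$, so $\iota(f \circ g) \ne 0$ and hence $f \circ g \ne 0$. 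This shows that $\Hom_\sB(\Delta(P), {-})$ is a faithful functor, i.e., that $\Delta(P)$ generates~$\sB$.

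There is no substantive obstacle here; the argument is essentially formal, relying only on the adjunction and on faithfulness and exactness of the embedding $\iota$. If one wishes to emphasize the parallel with Example~\ref{additive-monads-examples}(4), one can additionally remark that, when $\sA$ has set-indexed coproducts, the coproduct $(\Delta P)^{(X)}$ in $\sB$ is naturally isomorphic to $\Delta(P^{(X)})$ (since $\Delta$, being a left adjoint, preserves colimits), so the monad $\boT_{\Delta P}$ associated with $\Delta(P) \in \sB$ assigns $X \longmapsto \Hom_\sA(P, \iota\Delta(P^{(X)}))$, in full analogy with the computation at the end of Example~\ref{additive-monads-examples}(4).
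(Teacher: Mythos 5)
Your proposal is correct and takes exactly the same route as the paper, which simply defines $P=\Delta(Q)$ for a projective generator $Q\in\sA$ and asserts the conclusion without spelling out the verification. Your two checks — projectivity via the exactness of the composite $\Hom_\sA(P,\iota({-}))$ and the generator property via faithfulness of~$\iota$ and the adjunction unit — are precisely the details the paper leaves implicit.
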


\begin{proof}
 Denote the reflector by $\Delta\:\sA\rarrow\sB$, and let $Q$ be
a projective generator of~$\sA$.
 Then $P=\Delta(Q)$ is a projective generator of~$\sB$. 
\end{proof}

 From this point on, we are interested in the abelian, exactly
embedded right $n$\+perpendicular subcategories $\sB$ to sets
of objects or morphisms in the categories of modules over
associative rings, $\sA=R\modl$.
 According to
Lemmas~\ref{perpendicular-to-set-loc-pres-and-reflective}
and~\ref{reflective-projective-generator}, any such abelian
category $\sB$ is locally presentable and has a projective
generator.
 Our aim is to prove the converse assertion, for every $n\ge0$
and $n=\infty$.

 Let $\sB$ be a locally $\kappa$\+presentable abelian category, $R$
be an associative ring, and $\Theta\:\sB\rarrow R\modl$ be an exact
functor preserving infinite products and $\kappa$\+filtered
colimits.
 Then the functor $\Theta$ is a right adjoint~\cite[Theorem~1.66]{AR},
so it has a left adjoint functor $\Delta\:R\modl\rarrow\sB$.
 Set $P=\Delta(R)$, where $R$ is the free left $R$\+module with one
generator; then $P$ is a projective object in $\sB$ endowed with
a right action of the ring $R$, that is, with a homomorphism of
associative rings $R\rarrow\Hom_\sB(P,P)^\rop$.
 The functor $\Theta$ is corepresented by the object $P\in\sB$: one has
$\Theta(B)=\Hom_\sB(P,B)$ for all $B\in\sB$, with the left $R$\+module
structure on the abelian group $\Hom_\sB(P,B)$ coming from the right
action of $R$ in~$P$.

 The projective object $P$ is a projective generator of $\sB$ if and
only if the exact functor $\Theta$ is \emph{conservative}, i.~e., it
takes nonisomorphisms to nonisomorphisms, or equivalently, takes
nonzero objects to nonzero objects.
 An exact functor between abelian categories is conservative if and
only if it is faithful.
 Thus conservative exact functors $\Theta\:\sB\rarrow R\modl$
preserving infinite products and $\kappa$\+filtered colimits are
indexed by projective generators $P\in\sB$ endowed with
an associative ring homomorphism $R\rarrow\Hom_\sB(P,P)^\rop$.
 We are interested in those of such functors $\Theta$ that are not only
conservative, but, actually, fully faithful.

 The following theorem is the main result of this section.

\begin{thm} \label{ext-isomorphism-right-perpendicular-thm}
 Let\/ $\sB$ be a locally presentable abelian category with
a projective generator $P$, let $R$ be an associative ring, and let
$\theta\:R\rarrow\Hom_\sB(P,P)^\rop$ be an associative ring
homomorphism.
 Assume that the related functor\/ $\Theta=\Hom_\sB(P,{-})\:\sB\rarrow
R\modl$ is fully faithful.  Then \par
\textup{(a)} the full subcategory\/ $\Theta(\sB)\subset R\modl$ is
the right\/ $0$\+perpendicular subcategory to a set of morphisms in
$R\modl$; \par
\textup{(b)} assuming additionally that $\theta$~is injective, the full
subcategory\/ $\Theta(\sB)\subset R\modl$ is a right\/
$1$\+perpendicular subcategory (to a set of objects) in $R\modl$ if and
only if it is closed under extensions; \par
\textup{(c)} assuming that $\theta$~is injective and given an integer
$n\ge1$, the full subcategory\/ $\Theta(\sB)\subset R\modl$
is the right $n$\+perpendicular subcategory to a set of objects in
$R\modl$ whenever the induced maps between the Ext groups\/
$\Theta\:\Ext_\sB^i(B,C)\rarrow\Ext_R^i(\Theta(B),\Theta(C))$ are
isomorphisms for all $B$, $C\in\sB$ and $i\le n$; \par
\textup{(d)} assuming that $\theta$~is injective, the full subcategory\/
$\Theta(\sB)\subset R\modl$ is the right\/ $\infty$\+perpendicular
subcategory to a set of objects in $R\modl$ whenever the induced maps\/
$\Theta\:\Ext_\sB^i(B,C)\rarrow\Ext_R^i(\Theta(B),\Theta(C))$ are
isomorphisms for all $B$, $C\in\sB$ and all\/ $0\le i<\infty$.
\end{thm}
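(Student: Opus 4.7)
The plan is to exploit the reflective adjunction $\Delta\dashv\Theta$ between $R\modl$ and $\sB$ (with $\Delta(R)=P$) and to translate orthogonality with respect to the unit morphisms $\eta_M\:M\rarrow\Theta\Delta(M)$ into $\Hom$- and $\Ext$-vanishing conditions with respect to the cokernels of those units on free modules.

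First choose a regular cardinal $\kappa$ such that $\sB$ is locally $\kappa$-presentable and $P$ is $\kappa$-presentable; then $\Theta=\Hom_\sB(P,-)$ is exact, preserves products, and preserves $\kappa$-filtered colimits, so $\Theta(\sB)$ is reflective in $R\modl$ and closed under $\kappa$-filtered colimits. For part~(a), take $\sF=\{\eta_{R^{(X)}}\:|X|<\kappa\}$, a set of morphisms in $R\modl$. The inclusion $\Theta(\sB)\subseteq\sF^{\perp_0}$ is immediate from the reflection property. For the reverse inclusion, a five-lemma argument on any presentation $R^{(Y)}\rarrow R^{(X)}\rarrow M\rarrow0$ of a $\kappa$-presentable module $M$ extends the isomorphism $\Hom(\eta_{R^{(X)}},B)$ to an isomorphism $\Hom(\eta_M,B)$ for every $\kappa$-presentable $M$, and $\kappa$-filtered colimit preservation by $\Theta\Delta$ extends it to all $M$. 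Taking $M=B$ furnishes a retraction $s$ of $\eta_B$, and uniqueness in $\Hom(\eta_B,\Theta\Delta B)\cong\Hom(\Theta\Delta B,\Theta\Delta B)$ (valid because $\Theta\Delta B\in\Theta(\sB)$) forces $\eta_B\circ s=\operatorname{id}$ as well, so $\eta_B$ is an isomorphism and $B\in\Theta(\sB)$.

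For parts~(c) and~(d), the injectivity of $\theta$ makes each $\eta_{R^{(X)}}$ injective (one composes with the retractions $P^{(X)}\rarrow P$ onto individual summands to reduce to $\theta$). Set $E_X=\coker\eta_{R^{(X)}}$ and $\sE=\{E_X\:|X|<\kappa\}$, producing a short exact sequence $0\rarrow R^{(X)}\rarrow\Theta(P^{(X)})\rarrow E_X\rarrow0$ in $R\modl$. Applying $\Hom_R(-,B)$ for $B=\Theta(B')\in\Theta(\sB)$, the long exact sequence sandwiches $\Ext^i_R(E_X,B)$ between $\Ext^{i-1}_R(R^{(X)},B)$ (which vanishes for $i\ge2$, as $R^{(X)}$ is free) and $\Ext^i_R(\Theta P^{(X)},B)$. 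The latter equals $\Ext^i_\sB(P^{(X)},B')$ for $1\le i\le n$ by the hypothesis, and this vanishes because $P^{(X)}$ is projective in~$\sB$. Combined with the vanishing of the $i=1$ connecting map (since $\Hom(\eta_{R^{(X)}},B)$ is iso) and $\Hom_R(E_X,B)=0$ (same reason), this gives $\Theta(\sB)\subseteq\sE^{\perp_{0..n}}$. The reverse inclusion $\sE^{\perp_{0,1}}\subseteq\Theta(\sB)$ is immediate: the conditions $\Hom_R(E_X,B)=0=\Ext^1_R(E_X,B)$ make $\Hom(\eta_{R^{(X)}},B)$ iso, and part~(a) applies.

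For part~(b), one direction is Lemma~\ref{right-perpendicular-closure-properties}(b). For the converse, closure of $\Theta(\sB)$ under extensions in $R\modl$ already yields $\Ext^1_R(\Theta P^{(X)},B)=0$ for every $B\in\Theta(\sB)$: any such extension has its middle term in $\Theta(\sB)$, hence descends to a sequence in $\sB$ which splits by projectivity of $P^{(X)}$. The argument of~(c) then applies verbatim with $n=1$. The most delicate step of the entire theorem is the reverse inclusion in~(a), namely the passage from $\Hom(\eta_{R^{(X)}},B)$ iso on $\kappa$-presentable free modules to $\eta_B$ iso on arbitrary $B$; once this is in place, everything else reduces to routine long-exact-sequence manipulations.
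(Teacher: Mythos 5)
Your proposal is correct, and its skeleton coincides with the paper's: your units $\eta_{R^{(X)}}$ for $|X|<\kappa$ are exactly the paper's morphisms $\theta_X\:R[X]\rarrow\boT(X)=\Hom_\sB(P,P^{(X)})$, your $\sE=\{\coker\eta_{R^{(X)}}\}$ is the paper's set of modules $\boT(X)/R[X]$, and the long-exact-sequence bookkeeping in parts~(b)--(d) (including deducing $\Ext^1_R(\boT(X),B)=0$ directly from closure under extensions in~(b), and the degree shift $\Ext^i_R(\boT(X)/R[X],B)\simeq\Ext^i_R(\boT(X),B)$ for $i\ge2$) matches the paper's. The one step where you take a genuinely different route is the reverse inclusion in part~(a). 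The paper stays inside the class of free modules: having extended the isomorphisms $\Hom_R(\theta_Z,C)$ from small $Z$ to arbitrary $X$ by filtered colimits, it uses $\Hom_R(\boT(C),C)\simeq C^C$ to produce a natural surjection $\boT(C)\rarrow C$ with kernel $K$, then a surjection $\boT(K)\rarrow K$, and exhibits $C$ as the cokernel of the composition $\boT(K)\rarrow K\rarrow\boT(C)$, which lies in $\Theta(\sB)$ because $\Theta(\sB)$ is closed under cokernels. You instead propagate the isomorphism $\Hom_R(\eta_M,C)$ from free modules with fewer than~$\kappa$ generators to all $\kappa$\+presentable $M$ by the five lemma and then to all $M$ by accessibility, and conclude that the unit~$\eta_C$ itself is invertible via the split-monomorphism-plus-uniqueness argument. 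Both are sound; your version is the more categorical one and avoids the explicit two-step presentation, while the paper's is more hands-on and makes visible that closure under cokernels is the property being exploited.
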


\begin{proof}
 According to Example~\ref{additive-monads-examples}(3), the functor
$\Hom_\sB(P,{-})$ identifies the category $\sB$ with the category of
modules over the additive monad $\boT\:X\longmapsto\Hom_\sB(P,P^{(X)})$.
 Assume that the category $\sB$ is locally $\kappa$\+presentable and
the object $P$ is $\kappa$\+presentable; then the monad $\boT$ is
$\kappa$\+accessible.
 Identifying $\sB$ with $\boT\modl$, the functor $\Theta$ gets
identified with the forgetful functor $\Hom_\boT(\boT(*),{-})$ discussed
in the beginning of
Section~\ref{examples-of-fully-faithful-forgetful-secn}.
 In particular, for every set $X$ we have the free $\boT$\+module
$\boT(X)\in\boT\modl$ corresponding to the object $P^{(X)}\in\sB$,
and there is a natural left $R$\+module morphism
$R[X]\rarrow\boT(X)$, as in
Proposition~\ref{fully-faithful-forgetful-functor-image-identified}.
 Denote this morphism by~$\theta_X$.

 Part~(a): we claim that a left $R$\+module $C$ belongs to
the full subcategory $\Theta(\sB)\subset R\modl$ if and only if
the morphism of abelian groups
$$
 \Hom_R(\theta_Z,C)\:\Hom_R(\boT(Z),C)\lrarrow\Hom_R(R[Z],C)
$$
is an isomorphism for all sets $Z$ of cardinality less than~$\kappa$.
 Indeed, if $C\in\Theta(\sB)$, then
$$
 \Hom_R(\boT(X),C)\simeq C^X\simeq\Hom_R(R[X],C)
$$
for all sets $X$, since the functor $\Theta$ is fully faithful by
assumption.

 Conversely, assume that $\Hom_R(\theta_Z,C)$ is an isomorphism for
all sets $Z$ of cardinality less than~$\kappa$.
 Then $\Hom_R(\theta_X,C)$ is an isomorphism for all sets $X$,
because $R[X]=\varinjlim_{Z\subset X}R[Z]$ and
$\boT(X)=\varinjlim_{Z\subset X}\boT(Z)$ in the category of left
$R$\+modules, where the $\kappa$\+filtered colimit is taken over 
all the subsets $Z\subset X$ of cardinality less than~$\kappa$.

 In particular, we have $\Hom_R(\boT(C),C)=\Hom_R(R[C],C)=C^C$,
so there is a natural surjective morphism of left $R$\+modules
$\boT(C)\rarrow C$ corresponding to the identity map $C\rarrow C$.
 Let $K$ denote the kernel of this morphism; then the morphism
$\Hom_R(\theta_X,K)$ is an isomorphism for all sets $X$, since
the morphisms $\Hom_R(\theta_X,\boT(C))$ and
$\Hom_R(\theta_X,C)$ are.
 Hence we also have a natural surjective morphism of left
$R$\+modules $\boT(K)\rarrow K$.
 Now $C$ is the cokernel of the composition $\boT(K)\rarrow K
\rarrow\boT(C)$, and it follows that $C\in\Theta(\sB)$, since
$\boT(K)$, $\boT(C)\in\Theta(\sB)$ and the full subcategory
$\Theta(\sB)\subset R\modl$ is closed under cokernels.

 Part~(b): obviously, $\Theta(\sB)$ is closed under extensions in
$R\modl$ if and only if $\Theta$ induces isomorphisms on the groups
$\Ext^1$.
 If $\Theta(\sB)$ is a right $1$\+perpendicular subcategory in
$R\modl$, then $\Theta(\sB)\subset R\modl$ is closed under extensions
by Lemma~\ref{right-perpendicular-closure-properties}(b).

 Conversely, if $\Theta$ induces isomorphisms on $\Ext^1$, then
$\Ext^1_R(\boT(X),C)=0$ for all $C\in\Theta(\sB)$, as
the free $\boT$\+modules $\boT(X)$ are projective objects in
$\boT\modl$.
 We claim that a left $R$\+module belongs to the full subcategory
$\Theta(\sB)\subset R\modl$ if and only if
$$
 \Hom_R(\boT(Z)/R[Z],C)=0=\Ext^1_R(\boT(Z)/R[Z],C)
$$
for all sets $Z$ of cardinality less than~$\kappa$, where
$\boT(Z)/R[Z]$ is the cokernel of the morphism~$\theta_Z$.
 Notice that the morphism $\theta_Z$ is injective in our
present assumptions (since the homomorphism~$\theta$ is injective).

 Indeed, it suffices to consider the long exact sequence
\begin{multline*}
 0\lrarrow\Hom_R(\boT(Z)/R[Z],C)\lrarrow\Hom_R(\boT(Z),C) \\
 \lrarrow\Hom_R(R[Z],C) \lrarrow\Ext^1_R(\boT(Z)/R[Z],C)
 \lrarrow\Ext^1_R(\boT(Z),C)
\end{multline*}
and recall that a left $R$\+module $C$ belongs to $\Theta(\sB)$
if and only if $\Hom_R(\theta_Z,C)$ is an isomorphism for all sets $Z$
of the cardinality less than~$\kappa$, and that $\Ext^1_R(\boT(X),C)=0$
for all sets $X$ and all $C\in\Theta(\sB)$.

 Part~(c): for every set $X$, we have $\Ext^i_R(\boT(X)/R[X],C)=
\Ext^i_R(\boT(X),C)$ for all left $R$\+modules $C$ and all $i\ge2$.
 If the functor $\Theta$ induces isomorphisms of the groups $\Ext^i$
for all $i\le n$, then one has
$$
 \Ext^i_R(\boT(X)/R[X],C)=\Ext^i_R(\boT(X),C)=
 \Ext^i_\sB(P^{(X)},\Theta^{-1}(C))=0
$$ 
for all $C\in\Theta(\sB)$ and $2\le i\le n$, where $\Theta^{-1}(C)
\in\sB$ is an object such that $\Theta(\Theta^{-1}(C))=C$.
 Hence $\Theta(\sB)=\sE^{\perp_{0,1}}=\sE^{\perp_{0..n}}$, where $\sE
\subset R\modl$ is the set of all the left $R$\+modules
$\boT(Z_\mu)/R[Z_\mu]$, where $\mu$ runs over all the cardinals
smaller than~$\kappa$ and $Z_\mu$ is a set of cardinality~$\mu$.

 Part~(d) is provable in the same way as part~(c).
\end{proof}

\begin{rem} \label{good-perp-counterex-remark}
 It is clear from the proof of
Theorem~\ref{ext-isomorphism-right-perpendicular-thm}(c) that,
denoting by $\sE\subset R\modl$ the set of all left $R$\+modules
$\boT(Z_\mu)/R[Z_\mu]$ and assuming that $\Theta(\sB)=
\sE^{\perp_{0,1}}$, one has $\sE^{\perp_{0,1}}=\sE^{\perp_{0..n}}$
if and only if $\Ext^i_R(\boT(Z_\mu),C)=0$ for all the cardinals
$\mu$~involved, all $C\in\Theta(\sB)$, and all $1\le i\le n$.
 If we knew that, moreover, $\Ext^i_R(\boT(X),C)=0$ for all sets $X$,
all objects $C\in\Theta(\sB)$, and all $1\le i\le n$, it would follow
that the functor $\Theta\:\sB\rarrow R\modl$ induces isomorphisms
of the groups $\Ext^i$ for $i\le n$.

 Nevertheless, there are examples of associative (and even commutative)
rings $R$ with a set $\sE\subset R\modl$ of left $R$\+modules of
projective dimension~$1$ for which the embedding functor
$\Theta\:\sB\rarrow R\modl$ of the right $\infty$\+perpendicular
subcategory $\sB=\sE^{\perp_{0,1}}=\sE^{\perp_{0..\infty}}$
does \emph{not} induce an isomorphism of the groups $\Ext^i$ for 
for some $i\ge2$.
 Such examples can even be found among the embedding functors
$\Theta\:\sB\rarrow R\modl$ corepresented by projective generators
$P\in\sB$ with $R=\Hom_\sB(P,P)^\rop$
(see Example~\ref{comm-ring-ideal-good-projective-generator}(5) below).
\end{rem}

\begin{prop} \label{fully-faithful-b-implies-minus}
 Let\/ $\sB$ be a locally presentable abelian category with a projective
generator $P$, let $R$ be an associative ring, let\/
$\theta\:R\rarrow\Hom_\sB(P,P)^\rop$ be a ring homomorphism, and let\/
$\Theta=\Hom_\sB(P,{-})\:\sB\rarrow R\modl$ be the related
exact functor.
 Then the induced maps\/ $\Theta\:\Ext^i_\sB(B,C)\rarrow\Ext^i_R
(\Theta(B),\Theta(C))$ are isomorphisms for all $B$, $C\in\sB$ and
all\/ $0\le i<\infty$ if and only if the induced triangulated functor
between the bounded above derived categories\/
$\Theta\:\sD^-(\sB)\rarrow\sD^-(R\modl)$ is fully faithful.
\end{prop}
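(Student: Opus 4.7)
The easy direction ($\Leftarrow$) is a tautology: applying fully\+faithfulness of $\Theta\:\sD^-(\sB)\to\sD^-(R\modl)$ to $B,C\in\sB$ in cohomological degree zero and using $\Ext^i_\sB(B,C)=\Hom_{\sD^-(\sB)}(B,C[i])$ (together with its $R\modl$\+analogue) yields the Ext\+isomorphisms.

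For the nontrivial direction ($\Rightarrow$), the plan is to realise both derived $\Hom$\+groups as cohomology of a single Hom\+complex. Because $P$ is a projective generator, $\sB$ has enough projective objects, so every $B^\bullet\in\sD^-(\sB)$ admits a quasi\+isomorphism $P^\bullet\to B^\bullet$ from a bounded\+above complex of projective objects $P^\bullet$ in $\sB$; such a $P^\bullet$ is automatically K\+projective in $K(\sB)$, so $\Hom_{\sD^-(\sB)}(B^\bullet,C^\bullet[n])=H^n(\Hom_\sB^\bullet(P^\bullet,C^\bullet))$. Exactness of $\Theta$ makes $\Theta(P^\bullet)\to\Theta(B^\bullet)$ a quasi\+isomorphism in $\sD^-(R\modl)$, while the $i=0$ case of the hypothesis gives full\+and\+faithfulness of $\Theta$ on $\sB$ and identifies the Hom\+complexes termwise: $\Hom_\sB^\bullet(P^\bullet,C^\bullet)=\Hom_R^\bullet(\Theta(P^\bullet),\Theta(C^\bullet))$. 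The whole problem thereby reduces to showing that the natural map
\[
 H^n\bigl(\Hom_R^\bullet(\Theta(P^\bullet),\Theta(C^\bullet))\bigr)\lrarrow\Hom_{\sD^-(R\modl)}\bigl(\Theta(P^\bullet),\Theta(C^\bullet)[n]\bigr)
\]
is an isomorphism for every $n$; equivalently, that $\Theta(P^\bullet)$, though generally not K\+projective in $R\modl$, behaves like one against the specific target $\Theta(C^\bullet)$.

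The decisive input is that the Ext\+hypothesis forces $\Ext^i_R(\Theta(P^k),\Theta(M))=\Ext^i_\sB(P^k,M)=0$ for $i\ge1$ and every $M\in\sB$; equivalently, each $\Theta(P^k)$ is acyclic for the functor $\Hom_R(-,\Theta(M))$, $M\in\sB$. I would pick a K\+projective resolution $Q^\bullet\to\Theta(P^\bullet)$ in $R\modl$ and form its cone $A^\bullet$, an acyclic, bounded\+above complex whose terms $Q^k\oplus\Theta(P^{k-1})$ are all acyclic for $\Hom_R(-,\Theta(M))$ with $M\in\sB$. The target isomorphism then reduces to the acyclicity of $\Hom_R^\bullet(A^\bullet,\Theta(C^\bullet))$, which I would deduce from the column\+filtration spectral sequence of the double complex $D^{p,q}=\Hom_R(A^{-q},\Theta(C^p))$: the rows compute $H^q(\Hom_R^\bullet(A^\bullet,\Theta(C^p)))$, and these vanish for every $p$ by the acyclic\+resolution theorem applied to the bounded\+above acyclic complex $A^\bullet$ of $F$\+acyclics with $F=\Hom_R(-,\Theta(C^p))$. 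The main obstacle is convergence of this spectral sequence when $\Theta(C^\bullet)$ is bounded above but unbounded below; taking the total complex via products (the natural choice for Hom\+complexes) yields Hausdorff convergence in the second\+quadrant\+type region determined by the upper bounds on $A^\bullet$ and $\Theta(C^\bullet)$, and termwise $E_2$\+vanishing then propagates to vanishing of the abutment.
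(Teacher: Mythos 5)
Your argument is sound up to and including the reduction to the acyclicity of $\Hom_R^\bullet(A^\bullet,\Theta(C^\bullet))$, and the row\+wise acyclicity of $\Hom_R^\bullet(A^\bullet,\Theta(C^p))$ for each fixed~$p$ is also correct. The gap is the last step. The filtration whose $E_1$\+page consists of these row cohomologies is the filtration of the total complex by the index~$p$; because $\Theta(C^\bullet)$ is unbounded below, this filtration of the \emph{product} total complex is not exhaustive (its union is only the subcomplex of families whose $p$\+support is bounded below), so the vanishing of $E_1$ yields acyclicity of that subcomplex rather than of the product total complex; Hausdorffness and completeness do not help. (The filtration by the other index is exhaustive and complete, but its $E_1$\+page is the cohomology of the columns $\Hom_R(A^{-q},\Theta(C^\bullet))$, which is not known to vanish.) The principle you are actually invoking---that for a bounded\+above acyclic complex $A^\bullet$ whose terms are $\Hom_R({-},N)$\+acyclic for all $N$ in a class $\mathcal N$, and any bounded\+above complex $N^\bullet$ with terms in $\mathcal N$, the complex $\Hom_R^\bullet(A^\bullet,N^\bullet)$ is acyclic---is false. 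Take $R=k[x]/(x^2)$ and $\mathcal N=\{R\}$; since $R$ is self\+injective, every $R$\+module is $\Hom_R({-},R)$\+acyclic. Let $A^\bullet=(\dotsb\to R\xrightarrow{x}R\xrightarrow{x}R\to k\to0)$ (the cone of the augmentation of the projective resolution of~$k$, with $k$ in degree~$0$), and let $N^\bullet=(\dotsb\to R\xrightarrow{x}R\xrightarrow{x}R\to0)$. Each row $\Hom_R^\bullet(A^\bullet,R)\cong(0\to xR\to R\xrightarrow{x}R\xrightarrow{x}\dotsb)$ is acyclic, yet $H^0\Hom_R^\bullet(A^\bullet,N^\bullet)=\Hom_{K(R\modl)}(A^\bullet,N^\bullet)\ne0$: the degree\+zero component of a chain map $A^\bullet\rarrow N^\bullet$ is a map $k\rarrow R$ landing in $xR\cong k$, every such map extends to a chain map, and a chain map is null\+homotopic only if this component vanishes.

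The conclusion you need is nevertheless true, but it cannot be extracted from termwise $\Ext$\+vanishing against the objects of $\Theta(\sB)$ alone. The paper proceeds differently: the $\Ext$\+isomorphisms are first observed to be equivalent to full faithfulness of $\sD^\b(\sB)\rarrow\sD^\b(R\modl)$ (for \emph{bounded} $C^\bullet$ your double complex has only finitely many nonzero columns and your argument does work), and the passage from $\sD^\b$ to $\sD^-$ is then carried out in Proposition~\ref{fully-faithful-on-bounded-above-derived} by means of the left adjoint $\Delta$ of~$\Theta$: one forms $\boL\Delta\:\sD^-(R\modl)\rarrow\sD^-(\sB)$ on bounded\+above complexes of $\Delta$\+adjusted objects and checks that the adjunction counit $\boL\Delta\circ\Theta\rarrow\Id$ is an isomorphism, which is equivalent to full faithfulness of $\Theta$ on~$\sD^-$. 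Some such device in place of your spectral sequence is indispensable.
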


\begin{proof}
 Clearly, an exact functor between abelian categories $\sB\rarrow\sA$
induces isomorphisms on all the Ext groups if and only if the induced
triangulated functor between the bounded derived categories
$\sD^\b(\sB)\rarrow\sD^\b(\sA)$ is fully faithful.
 According to Proposition~\ref{fully-faithful-on-bounded-above-derived}
below, the functor $\sD^-(\sB)\rarrow\sD^-(\sA)$ is fully faithful if
and only if the functor $\sD^\b(\sB)\rarrow\sD^\b(\sA)$ is, provided
that there are enough projective objects in the category $\sA$ and
the exact functor $\sB\rarrow\sA$ has a left adjoint
$\Delta\:\sA\rarrow\sB$.

 In the situation at hand, obviously there are enough projectives in
$R\modl$, so it remains to recall from the discussion before
Theorem~\ref{ext-isomorphism-right-perpendicular-thm} that
the functor $\Theta$ has a left adjoint.
 Alternatively, one can construct the functor $\Delta$ explicitly, as
the unique right exact functor taking the free left $R$\+module $R[X]$
to the object $P^{(X)}\in\sB$ for all sets $X$ and acting on morphisms
between free left $R$\+modules in the natural way (determined by
the ring homomorphism~$\theta$).
\end{proof}

 Now let us restrict to the particular case when the ring $R$
coincides with the endomorphism ring $\Hom_\sB(P,P)^\rop$ and
$\theta$~is the identity isomorphism.
 Let $\sB$ be a locally presentable abelian category with
a projective generator~$P$.
 We will say that the projective generator $P\in\sB$ is
\emph{$0$\+good} if the conservative exact functor
$\Theta_P=\Hom_\sB(P,{-})\:\sB\rarrow R\modl$, where
$R=\Hom_\sB(P,P)^\rop$, is fully faithful.

 A projective generator $P\in\sB$ is $0$\+good if and only if the full
subcategory on one object $\{P\}\subset\sB$ is \emph{dense} in
the category $\sB$ \cite[Proposition~1.26(i)]{AR}
(cf.~\cite[Remark~1.23 and Example~1.24(4)]{AR}).
 Such full subcategories were called ``left adequate'' in
the terminology of the paper~\cite{Isb}.

 Furthermore, we will say that a projective generator $P\in\sB$
is \emph{$1$\+good} if the functor $\Theta_P$ is fully faithful
and the full subcategory $\Theta_P(\sB)\subset R\modl$ is closed
under extensions.
 More generally, for any $n\ge0$, we will say that a projective
generator $P\in\sB$ is \emph{$n$\+good} if the functor
$\Theta_P$ induces isomorphisms on the groups $\Ext^i$ for all
$0\le i\le n$.

 Finally, we will say that a projective generator $P\in\sB$ is
\emph{good} (or \emph{$\infty$\+good}) if it is $n$\+good for
all $n\ge0$.
 According to Proposition~\ref{fully-faithful-b-implies-minus},
a projective generator $P\in\sB$ is good if and only if the functor
$\Theta_P$ induces a fully faithful functor between the bounded
above derived categories
$$
 \sD^-(\sB)\lrarrow\sD^-(R\modl).
$$
 Clearly, any $(n+1)$\+good projective generator, $n\ge0$, is
at the same time $n$\+good.

 The following corollary summarizes the assertions of
Theorem~\ref{ext-isomorphism-right-perpendicular-thm} in
the case $\theta=\id$.

\begin{cor} \label{good-implies-n-perpendicular}
 Let\/ $\sB$ be a locally presentable abelian category and $P\in\sB$
be a projective generator.  Then \par
\textup{(a)} if $P$ is\/ $0$\+good, then the full subcategory\/
$\Theta_P(\sB)\subset R\modl$ is an abelian, exactly embedded right\/
$0$\+perpendicular subcategory to a set of morphisms in $R\modl$; \par
\textup{(b)} assume that $P$ is\/ $0$\+good; then $P$ is\/ $1$\+good if
and only if\/ $\Theta_P(\sB)\subset R\modl$ is a right\/
$1$\+perpendicular subcategory (to a set of objects) in $R\modl$; \par
\textup{(c)} if $P$ is\/ $n$\+good for some $n\ge1$, then\/
$\Theta_P(\sB)\subset R\modl$ is a right $n$\+perpendicular
subcategory to a set of objects in $R\modl$; \par
\textup{(d)} if $P$ is good, then\/ $\Theta_P(\sB)\subset R\modl$ is
a right\/ $\infty$\+perpendicular subcategory to a set of objects
in $R\modl$.  \qed
\end{cor}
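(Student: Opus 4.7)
The plan is to derive all four parts as direct specializations of Theorem~\ref{ext-isomorphism-right-perpendicular-thm} to the case $R=\Hom_\sB(P,P)^\rop$ and $\theta=\id_R$. With this choice $\theta$ is trivially injective, so every part of that theorem becomes applicable without further hypothesis. Unpacking the definitions, $P$ is $0$\+good precisely when $\Theta_P$ is fully faithful, $P$ is $n$\+good (for $n\ge1$) precisely when additionally $\Theta_P$ induces isomorphisms on $\Ext^i$ for all $1\le i\le n$, and $P$ is good precisely when these isomorphisms hold for all finite $i$. Hence the hypotheses of the four parts of the corollary translate verbatim into the hypotheses of the four parts of the theorem.

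Given this dictionary, parts~(c) and~(d) are direct restatements of Theorem~\ref{ext-isomorphism-right-perpendicular-thm}(c) and~(d), respectively, noting that $n$\+goodness for $n\ge1$ implies $0$\+goodness and hence full\+and\+faithfulness of $\Theta_P$. For part~(b), I will observe that, assuming $0$\+goodness, the $1$\+good condition is exactly the statement that $\Theta_P$ induces isomorphisms on $\Ext^1$, and that this is in turn equivalent to $\Theta_P(\sB)\subset R\modl$ being closed under extensions (since $\Theta_P$ already preserves $\Hom$); thus the equivalence in Theorem~\ref{ext-isomorphism-right-perpendicular-thm}(b) gives what is wanted.

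Part~(a) follows from Theorem~\ref{ext-isomorphism-right-perpendicular-thm}(a) for the right $0$\+perpendicular assertion; the only supplementary point is the claim that $\Theta_P(\sB)$ is abelian and exactly embedded in $R\modl$. This requires a short independent verification: since $P$ is projective in $\sB$, the functor $\Theta_P$ is exact, and since it is fully faithful, any morphism in $R\modl$ between objects of $\Theta_P(\sB)$ lifts uniquely to a morphism $g$ in $\sB$ whose kernel and cokernel are sent by $\Theta_P$ to the kernel and cokernel of the original morphism computed in $R\modl$. Hence $\Theta_P(\sB)$ is closed under kernels and cokernels, so is abelian and the inclusion is exact. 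No step presents a real obstacle: the entire corollary is a bookkeeping exercise that matches the definition of an $n$\+good projective generator with the output of Theorem~\ref{ext-isomorphism-right-perpendicular-thm}, the only non\+mechanical ingredient being the one\+line exactness\+plus\+full\+faithfulness argument just sketched.
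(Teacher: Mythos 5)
Your proposal is correct and coincides with the paper's own treatment: the paper states the corollary with an immediate \qed, explaining only that it ``summarizes the assertions of Theorem~\ref{ext-isomorphism-right-perpendicular-thm} in the case $\theta=\id$,'' which is precisely the specialization you carry out (with $\theta=\id$ automatically injective). Your supplementary check in part~(a) that $\Theta_P(\sB)$ is abelian and exactly embedded, via exactness plus full faithfulness, is the right observation and is the same fact the paper uses implicitly in the proof of Theorem~\ref{ext-isomorphism-right-perpendicular-thm}(a) when it invokes closure of $\Theta(\sB)$ under cokernels.
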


 For any cardinal $\lambda$, we denote its successor cardinar
by~$\lambda^+$.
 The proof of the following theorem will be given in
Section~\ref{lambda-flat-secn}.

\begin{thm} \label{copower-good-projective-generator}
 Let\/ $\sB$ be an abelian category with a projective generator $P$
and $\lambda$~be a cardinal such that the category\/ $\sB$ is
locally $\lambda^+$\+presentable and the object $P\in\sB$ is
$\lambda^+$\+presentable.
 Then $Q=P^{(\lambda)}$ is a good projective generator of\/~$\sB$.
\end{thm}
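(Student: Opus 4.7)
The plan is to prove the theorem in two stages: first establish that $Q=P^{(\lambda)}$ is $0$-good (fully faithful embedding), then upgrade to $\infty$-goodness using the theory of $\lambda^+$-flat $R$-modules announced for Section~\ref{lambda-flat-secn}.

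For $0$-goodness, I would first note that $Q$ is itself $\lambda^+$-presentable, since it is a coproduct of $\lambda$ copies of the $\lambda^+$-presentable object $P$ in the locally $\lambda^+$-presentable category $\sB$. I would then invoke the Isbell/\cite{PS} result referenced in the introduction: provided the copower of a projective generator is large enough relative to the presentability rank, the corresponding functor $\Theta_Q=\Hom_\sB(Q,{-})\colon\sB\to R\modl$ (with $R=\Hom_\sB(Q,Q)^{\rop}$) is fully faithful. In other words, $\{Q\}$ becomes a dense subcategory of $\sB$ in the sense of \cite{AR}. The hypothesis that $P$ is $\lambda^+$-presentable ensures $\lambda$ is in the admissible range.

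For $\infty$-goodness, by Proposition~\ref{fully-faithful-b-implies-minus} it suffices to show that $\Theta_Q$ induces isomorphisms on all $\Ext^i$. Choose a projective resolution $\cdots\to Q^{(X_1)}\to Q^{(X_0)}\to B\to 0$ in $\sB$; applying the exact functor $\Theta_Q$ yields an exact sequence $\cdots\to\Theta_Q(Q^{(X_1)})\to\Theta_Q(Q^{(X_0)})\to\Theta_Q(B)\to 0$ in $R\modl$. Full-and-faithfulness of $\Theta_Q$ identifies the complex $\Hom_\sB(Q^{(X_\bu)},C)$ computing $\Ext^*_\sB(B,C)$ with $\Hom_R(\Theta_Q(Q^{(X_\bu)}),\Theta_Q(C))$. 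Thus the theorem reduces to the acyclicity claim
$$
\Ext^i_R(\Theta_Q(Q^{(X)}),\,\Theta_Q(C))=0\qquad\text{for all } i\ge 1,\ \text{sets } X,\ C\in\sB.
$$

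The main obstacle is this acyclicity, and the plan is to deduce it from two facts about $\lambda^+$-flat $R$-modules. The first is that each $\Theta_Q(Q^{(X)})=\Hom_\sB(Q,Q^{(X)})$ is $\lambda^+$-flat: writing $X=\varinjlim_{Z\subset X}Z$ as the $\lambda^+$-filtered colimit over its $\lambda^+$-small subsets, and using that $Q$ is $\lambda^+$-presentable so that $\Theta_Q$ preserves $\lambda^+$-filtered colimits, one expresses $\Theta_Q(Q^{(X)})$ as a $\lambda^+$-filtered colimit of $\lambda^+$-presentable free $R$-modules $\Theta_Q(Q^{(Z)})$, each of which decomposes as a free summand of $R^{(\lambda\times Z)}$ together with an idempotent controlled by the ring structure. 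The second fact is the $\Ext$-orthogonality: every object in the essential image of $\Theta_Q$ is right $\Ext^i$-perpendicular for $i\ge 1$ to every $\lambda^+$-flat $R$-module. Granting these two ingredients from the theory developed at the start of Section~\ref{lambda-flat-secn}, the vanishing is immediate, and the theorem follows.

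The hardest step is the orthogonality in the second fact: showing that $\Theta_Q(C)$ lies perpendicular to all $\lambda^+$-flat $R$-modules in positive Ext degrees. The expected route is a transfinite dévissage of a $\lambda^+$-flat module along the $\lambda^+$-filtered colimit presentation, reducing the problem to the case of free $R$-modules with fewer than $\lambda^+$ generators (where the relevant $\Ext$ is computed in $\sB$ via $0$-goodness and vanishes by projectivity of $Q^{(Z)}$), then controlling the higher derived limits using a Mittag-Leffler-type condition inherent to $\lambda^+$-flat presentations. This is exactly the technical content that Section~\ref{lambda-flat-secn} is designed to provide.
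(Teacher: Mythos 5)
Your first half is sound and matches the paper: $0$\+goodness of $Q$ comes from the Isbell--\cite{PS} result (Theorem~\ref{copower-0-good-PS}), the reduction via Proposition~\ref{fully-faithful-b-implies-minus} to the vanishing of $\Ext^i_R(\Theta_Q(Q^{(X)}),\Theta_Q(C))$ for $i\ge1$ is legitimate, and your ``first fact'' (each $\Theta_Q(Q^{(X)})$ is a $\lambda^+$\+filtered colimit of free modules $\Theta_Q(Q^{(Z)})\simeq R$, hence $\lambda^+$\+flat) is exactly what the paper establishes. The gap is in your ``second fact'': the claim that every object of $\Theta_Q(\sB)$ is right $\Ext^{\ge1}$\+perpendicular to \emph{every} $\lambda^+$\+flat $R$\+module is false. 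Once one knows that $\lambda^+$\+flat modules $F$ satisfy $\boL_n\Delta(F)=0$ for $n\ge1$ (which is what the paper actually proves), one gets $\Ext^i_R(F,\Theta_Q(C))\simeq\Ext^i_\sB(\Delta(F),C)$, and this vanishes only when $\Delta(F)$ is projective in $\sB$ --- which fails for general $\lambda^+$\+flat $F$. Concretely, take $\sB=\boZ\modl$, \ $P=\boZ$, \ $\lambda=\aleph_0$, \ $Q=\boZ^{(\omega)}$. The Baer--Specker group $B=\boZ^\omega$ is an $\aleph_1$\+filtered colimit of countable-rank free groups, so $F=\Theta_Q(B)$ is an $\aleph_1$\+filtered colimit of free $R$\+modules with one generator, hence $\aleph_1$\+flat; yet $\Delta(F)=B$ is not free, so $\Ext^1_R(F,\Theta_Q(C))\simeq\Ext^1_\boZ(B,C)\ne0$ for a suitable~$C$. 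Your proposed proof of the second fact is precisely where this surfaces: the ``Mittag-Leffler-type condition'' controlling the derived limits along the $\lambda^+$\+filtered presentation does not exist --- those $\varprojlim^1$\+obstructions are exactly what make $\Ext^1(\boZ^\omega,{-})$ nonzero.

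The correct weakening, which still suffices for your reduction because $\Delta(\Theta_Q(Q^{(X)}))=Q^{(X)}$ \emph{is} projective in $\sB$, is that all $\lambda^+$\+flat modules are adjusted to the reflector: $\boL_n\Delta(F)=0$ for $n\ge1$. The paper proves this not by computing $\Ext$ into a fixed object, but by applying the colimit-preserving left adjoint $\Delta$ directly: since projective modules are $\lambda^+$\+flat and the class of $\lambda^+$\+flat modules is closed under kernels of epimorphisms, it is enough to show $\Delta$ preserves exactness of short exact sequences of $\lambda^+$\+flat modules; and by Lemma~\ref{kappa-flat-closure-properties}(b) every such sequence is a $\lambda^+$\+filtered colimit of \emph{split} short exact sequences, so one concludes because $\Delta$ preserves colimits and $\lambda^+$\+filtered colimits are exact in~$\sB$. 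You should replace your second fact and its proposed d\'evissage by this argument (together with Proposition~\ref{fully-faithful-on-bounded-above-derived}); as written, the proposal does not close.
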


 The following corollary shows that the classes of abelian categories
embodied by the (abelian and exactly embedded) right $n$\+perpendicular
subcategories to sets of objects/morphisms in the categories of
modules over associative rings are the same for all integers $n\ge0$
and for $n=\infty$.

\begin{cor} \label{embodied-abelian-categories}
 The following classes of abelian categories (viewed as abstract
categories irrespectively of a particular embedding into a category of
modules) coincide with each other, and with the class of all locally
presentable abelian categories with a projective generator:
\begin{itemize}
\item abelian, exactly and accessibly embedded full subcategories in
the categories of modules over associative rings, closed under
infinite products;
\item abelian, exactly embedded right\/ $0$\+perpendicular subcategories
to sets of morphisms in the categories of modules over associative
rings;
\item abelian, exactly embedded right\/ $1$\+perpendicular subcategories
to sets of objects in the categories of modules over associative rings;
\item right\/ $n$\+perpendicular subcategories to sets of objects in
the categories of modules over associative rings (where $n\ge2$~is some
fixed integer);
\item right\/ $\infty$\+perpendicular subcategories to sets of objects
in the categories of modules over associative rings.
\end{itemize}
\end{cor}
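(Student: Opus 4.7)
The plan is to prove the corollary by a two-sided containment: for each of the five listed classes $\mathcal C_i$, I will show that every abelian category occurring in $\mathcal C_i$ is locally presentable with a projective generator, and conversely that every locally presentable abelian category with a projective generator is representable (up to equivalence) as an object of $\mathcal C_i$. The entire content is then a book-keeping consequence of results already established in this section together with Theorem~\ref{copower-good-projective-generator}.

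For the forward direction, I would observe that each class consists of subcategories that are abelian, exactly embedded, accessibly embedded, and closed under all limits. Accessibility is by hypothesis in class~(i), and is automatic for right $n$\+perpendicular subcategories to sets of objects or morphisms by (the proof of) Lemma~\ref{perpendicular-to-set-loc-pres-and-reflective}; closure under limits follows from closure under products and kernels via Lemma~\ref{right-perpendicular-closure-properties} combined with Lemma~\ref{n+1-perpendicular-implies-n}. For classes (iv) and (v) the abelian-and-exactly-embedded property is itself automatic by Lemma~\ref{right-perpendicular-closure-properties}(c). In each case, Lemma~\ref{perpendicular-to-set-loc-pres-and-reflective} (or the analogous invocation of \cite[Theorem and Corollary~2.48]{AR} for class~(i)) yields a locally presentable abelian reflective subcategory, and then Lemma~\ref{reflective-projective-generator} supplies the projective generator.

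For the reverse direction, I would fix a locally presentable abelian category $\sB$ with projective generator $P$. Theorem~\ref{copower-good-projective-generator} provides a good projective generator $Q=P^{(\lambda)}$ of $\sB$ for $\lambda$ sufficiently large. Setting $R=\Hom_\sB(Q,Q)^\rop$, Corollary~\ref{good-implies-n-perpendicular}(d) realizes $\Theta_Q(\sB)\subset R\modl$ as a right $\infty$\+perpendicular subcategory to a set of objects, placing $\sB$ in class~(v). Since any right $\infty$\+perpendicular subcategory is, by definition, a right $n$\+perpendicular subcategory for every $n\ge 2$, this also handles class~(iv). For classes~(ii) and~(iii), Corollary~\ref{good-implies-n-perpendicular}(a,b) applies because a good projective generator is in particular $0$\+good and $1$\+good. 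Class~(i) is finally obtained from the same embedding $\Theta_Q$, which preserves products (being a right adjoint, as recalled before Theorem~\ref{ext-isomorphism-right-perpendicular-thm}) and is $\kappa$\+accessible for sufficiently large regular $\kappa$, since $Q$ is $\kappa$\+presentable in the locally $\kappa$\+presentable category $\sB$.

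The main obstacle is entirely concentrated in Theorem~\ref{copower-good-projective-generator}, whose proof is deferred to Section~\ref{lambda-flat-secn} and depends on the theory of $\kappa$\+flat modules developed there; this is where all the genuine difficulty lives. Once that theorem is in hand, the corollary reduces to tabulating implications among Corollary~\ref{good-implies-n-perpendicular} and Lemmas~\ref{n+1-perpendicular-implies-n}, \ref{right-perpendicular-closure-properties}, \ref{perpendicular-to-set-loc-pres-and-reflective}, and~\ref{reflective-projective-generator}. The only subtlety worth flagging is verifying that the distinction between ``right $n$\+perpendicular'' and ``abelian, exactly embedded right $n$\+perpendicular'' collapses in the relevant regime (automatic for $n\ge 2$ by Lemma~\ref{right-perpendicular-closure-properties}(c), and built into the definition for $n=\infty$), so that the five classes really do coincide as abstract categories.
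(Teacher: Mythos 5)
Your proposal is correct and follows essentially the same route as the paper: the forward direction is the same chain through Lemmas~\ref{n+1-perpendicular-implies-n}, \ref{right-perpendicular-closure-properties}, \ref{perpendicular-to-set-loc-pres-and-reflective}, and~\ref{reflective-projective-generator}, and the reverse direction rests on Theorem~\ref{copower-good-projective-generator} together with Corollary~\ref{good-implies-n-perpendicular}. The only (immaterial) organizational difference is that the paper closes a single cycle of inclusions, so it needs only part~(d) of Corollary~\ref{good-implies-n-perpendicular} rather than the separate invocations of parts (a), (b), (d) that your two-sided containment for each class requires.
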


\begin{proof}
 Any right $(n+1)$\+perpendicular subcategory to a set of
objects in $R\modl$ is at the same time a right $n$\+perpendicular
subcategory to a set of objects/morphisms in $R\modl$ by
Lemma~\ref{n+1-perpendicular-implies-n}.
 Any right $2$\+perpendicular subcategory in an abelian category
is abelian and exactly embedded by
Lemma~\ref{right-perpendicular-closure-properties}.

 Any right $0$\+perpendicular subcategory to a set of morphisms
in $R\modl$ is accessibly embedded and closed under infinite products
(in fact, arbitrary limits) by~\cite[Observation~1.34 and
Proposition~1.35]{AR}.
 Any abelian, exactly and accessively embedded full subcategory in
$R\modl$ is locally presentable and reflective
by~\cite[Theorem and Corollary~2.48]{AR} (see
also~\cite[Theorem~1.39]{AR} and
Lemma~\ref{perpendicular-to-set-loc-pres-and-reflective} above).
 Any abelian, exactly embedded, reflective full subcategory
in $R\modl$ has a projective generator by
Lemma~\ref{reflective-projective-generator}.
 This proves that all the itemized classes of categories consist
of locally presentable abelian categories with a projective
generator.

 Conversely, if $\sB$ is a locally $\kappa$\+presentable abelian
category with a $\kappa$\+presentable projective generator $P$,
and $\lambda$ is a cardinal such that $\lambda^+\ge\kappa$, then
$Q=P^{(\lambda)}$ is a good projective generator of $\sB$ by
Theorem~\ref{copower-good-projective-generator}.
 Set $S=\Hom_\sB(Q,Q)^\rop$; then the functor $\Theta_Q\:\sB\rarrow
S\modl$ is fully faithful and its image is the right
$\infty$\+perpendicular subcategory to a set of objects in
$S\modl$ by Corollary~\ref{good-implies-n-perpendicular}(d).
\end{proof}

 Under Vop\v enka's principle, the set-theoretical conditions
(``accessibly embedded'', ``to sets of'') in
Corollary~\ref{embodied-abelian-categories} can be dropped.
 The related classes of abelian categories (that is, the classes
of all abelian, exactly embedded right $n$\+perpendicular
subcategories to classes of objects or morphisms in the categories
of modules over associative rings, for various values of~$n$) will
still be the same, and coincide with the class of all locally
presentable abelian categories with a projective generator.
 This is provable using the above arguments together
with~\cite[Corollary~6.24]{AR}.

\Section{Examples of Good Projective Generators}
\label{examples-of-good-projective-generators-secn}

 The aim of this section is to provide some examples to the theory
developed in Section~\ref{perpendicular-subcategories-secn},
mostly based on Examples~\ref{additive-monads-examples}\+-%
\ref{accessible-additive-monads-examples} and examples from
Sections~\ref{examples-of-fully-faithful-forgetful-secn}\+-%
\ref{topological-rings-secn}.

\begin{exs} \label{topological-ring-0-good}
 (1)~Let $\R$ be a complete, separated topological associative ring
with a base of neighborhoods of zero formed by open right ideals,
$R$ be an associative ring, and $\theta\:R\rarrow\R$ be an associative
ring homomorphism such that the forgetful functor
$\R\contra\rarrow R\modl$ is fully faithful.
 Then the forgetful functor $\R\contra\rarrow\R\modl$ is also fully
faithful.
 So, by the definition, the free left $\R$\+contramodule with one
generator $P=\R=\R[[*]]$ is a $0$\+good projective generator of
$\R\contra$.
 This includes all the topological rings satisfying the assumptions
of Theorem~\ref{topol-ring-fully-faithful-theorem}, and in particular,
all the topological rings from
Examples~\ref{noncommutative-power-series}
and~\ref{central-adic-topological-ring} (and those topological
rings from Examples~\ref{central-mult-subset-topological-ring}
which satisfy the conditions mentioned there).

 See Examples~\ref{comm-ring-ideal-good-projective-generator}(6\+8),
\ref{central-ideal-good-projective-generator}(4),
\ref{comm-ring-mult-subset-good-projective-generator}(2),
and~\ref{central-mult-subset-good-projective-generator}(2)
for further discussion.

\smallskip

 (2)~Let $\R$ be a complete, separated topological associative ring
with a base of neighborhoods of zero formed by open right ideals,
$R$ be an associative ring, and $\theta\:R\rarrow\R$ be an associative
ring homomorphism such that the related forgetful functor
$\Theta\:\R\contra\rarrow R\modl$ is fully faithful.
 Then the argument from the proof of
Theorem~\ref{ext-isomorphism-right-perpendicular-thm} shows how
to distinguish the objects of the full subcategory $\R\contra$
among the objects of the ambient category $R\modl$.

 Specifically, let $Y$ be a set of the cardinality greater or
equal to the cardinality of a base of neighborhoods of zero in~$\R$.
 Denote by $\theta_Y$ the natural $R$\+module morphism
$R[Y]\rarrow\R[[Y]]$.
 Then a left $R$\+module $C$ belongs to the full subcategory
$\Theta(\R\contra)\subset R\modl$ if and only if the morphism of
abelian groups $\Hom_R(\theta_Y,C)\:\Hom_R(\R[[Y]],C)\rarrow
\Hom_R(R[Y],C)=C^Y$ is an isomorphism.
 (Notice that if $Z$ is a subset in $Y$ and the map
$\Hom_R(\theta_Y,C)$ is an isomorphism, then the map
$\Hom_R(\theta_Z,C)$ is also an isomorphism, because the left
$R$\+module morphism $\theta_Z\:R[Z]\rarrow\R[[Z]]$ is
a direct summand of the morphism~$\theta_Y$.)

 In particular, in the context of
Theorem~\ref{topol-ring-fully-faithful-theorem},
it suffices to take a countable set~$Y$.
\end{exs}

\begin{exs} \label{comm-ring-ideal-good-projective-generator}
 (1)~Let $R$ be a Noetherian commutative ring and $I\subset R$ be
a finitely generated ideal; denote by $\R=\varprojlim_n R/I^n$
the $I$\+adic completion of the ring~$R$.
 Let $\sB$ be the locally $\aleph_1$\+presentable abelian
category $\R\contra\simeq R\modl_{I\ctra}$ (see
Examples~\ref{comm-ring-ideal-contramodules-examples}) and
$P=\R=\Delta_I(R)$ be its natural projective generator.
 Then the forgetful/natural embedding functor $\Theta\:\sB\rarrow
R\modl$ is corepresented by the projective generator $P\in\sB$ with
the natural action of the ring $R$ in it (provided by the natural
ring homomorphism $\theta\:R\rarrow\R=\Hom_\sB(P,P)^\rop$).

 According to~\cite[Theorem~B.8.1]{Pweak}, the functor $\Theta$
induces isomorphisms of the groups $\Ext^i$ for all $0\le i<\infty$.
 Moreover, one can show using~\cite[Proposiitons~B.9.1
and~B.10.1]{Pweak} that the triangulated functor between the bounded
above derived categories $\sD^-(\sB)\rarrow\sD^-(R\modl)$ induced
by exact functor $\Theta$ is fully faithful.

 In particular, $\R$ is also a Noetherian commutative ring; so
one can replace $R$ with $\R$ and consider the case $R=\R$.
 Then we have $\Theta=\Theta_P$, so we can conclude that $P=\R$ is
a good projective generator of~$\sB$.

\smallskip

 (2)~More generally, let $R$ be a commutative ring and $I\subset R$
be a weakly proregular finitely generated ideal.
 We keep the notation $\R=\varprojlim_n R/I^n$, \ $\sB=\R\contra
=R\modl_{I\ctra}$, and $P=\R=\Delta_I(R)$
(see Example~\ref{comm-ring-ideal-contramodules-examples}(3)).
 As in~(1), \,$\sB$ is a locally $\aleph_1$\+presentable abelian
category, $P$ is an $\aleph_1$\+presentable projective generator of
$\sB$, and the forgetful/natural embedding functor $\Theta\:\sB
\rarrow R\modl$ is corepresented by~$P$.

 According to~\cite[Theorem~2.9]{Pmgm}, the triangulated functor
between the unbounded derived categories $\sD(\sB)\rarrow\sD(R\modl)$
induced by $\Theta$ is fully faithful.
 Hence the functor $\Theta$ induces isomorphisms of the groups $\Ext^i$
for all $0\le i<\infty$.
 
 By~\cite[Lemma~5.3(b) or~5.4(b)]{Pmgm}, the ideal $\R I=
\varprojlim_nI/I^n$ in the ring $\R$ is also weakly proregular;
so one can replace $R$ with $\R$ and consider the case $R=\R$.
 Then $\Theta=\Theta_P$, so we can conclude that $P=\R$ is a good
projective generator of~$\sB$.

\smallskip

 (3)~Even more generally, let $R$ be a commutative ring and $I\subset R$
be a finitely generated ideal.
 Set $\sB=R\modl_{I\ctra}$ and $P=\Delta_I(R)$ (see
Example~\ref{comm-ring-ideal-contramodules-examples}(1)).
 As in~(2), \,$\sB$ is a locally $\aleph_1$\+presentable abelian
category, $P$ is an $\aleph_1$\+presentable projective generator of
$\sB$, and the natural embedding functor $\Theta\:\sB\rarrow R\modl$
is corepresented by the projective generator $P\in\sB$ with
the natural action of the ring $R$ in it (provided by the natural
ring homomorphism $\theta\:R\rarrow\Delta_I(R)=\Hom_\sB(P,P)^\rop$).

 Following the argument in~\cite[proof of Theorem~2.9]{Pmgm} based
on~\cite[Lemma~2.7]{Pmgm}, the triangulated functor
$\sD(\sB)\rarrow\sD(R\modl)$ induced by $\Theta$ is fully faithful 
provided that $H_i(\Hom_R(T^\bu(R;s_1,\dotsc,s_m),R[X]))=0$
for all sets $X$ and all $i>0$.
 Conversely, arguing as in~\cite[second paragraph of Remark~6.8]{PMat}
(with torsion modules replaced by contramodules), one can show that
$H_i(\Hom_R(T^\bu(R;s_1,\dotsc,s_m),R[X]))=0$ for all $i>0$ whenever
the functor $\sD^\b(\sB)\rarrow\sD^\b(R\modl)$ is fully faithful.
 Thus, given any derived category symbol $\star=\b$, $+$, $-$,
or~$\varnothing$, the triangulated functor $\sD^\star(\sB)\rarrow
\sD^\star(R\modl)$ induced by $\Theta$ is fully faithful if and only if
for all sets $X$ one has $H_i(\Hom_R(T^\bu(R;s_1,\dotsc,s_m),R[X]))=0$
for $i>0$, or equivalently, if and only if
$\varprojlim_n H_i(\Hom_R(T^\bu_n(R;s_1,\dotsc,s_m),R[X]))=0$ for
$i\ge1$ and $\varprojlim_n^1H_i(\Hom_R(T^\bu_n(R;s_1,\dotsc,s_m),R[X]))
=0$ for $i\ge2$.

 Arguing as in~\cite[Proposition~3.1]{PMat}, one can show that
$\Delta_I(R)$ is a commutative ring.
 All the $I$\+contramodule $R$\+modules admit a unique natural
extension of their $R$\+module structure to a $\Delta_I(R)$\+module
structure.  
 So one can replace $R$ with $\Delta_I(R)$ and $I$ with $\Delta_I(R)I$,
and consider the case $R=\Delta_I(R)$.
 Then $\Theta=\Theta_P$.
 So $P$ is always a $1$\+good projective generator of $\sB$, since
$\sB=R\modl_{I\ctra}$ is closed under extensions in $R\modl$.
 As we have seen, $P$ is a good projective generator of $\sB$ if
and only if $H_i(\Hom_R(T^\bu(R;s_1,\dotsc,s_m),\.\Delta_I(R)[X]))=0$
for all sets $X$ and all $i>0$.

\smallskip

 (4)~Let $E_1\larrow E_2\larrow E_3\larrow\dotsb$ be a projective
system of abelian groups.
 Clearly, one has $\varprojlim_n E_n[X]=0$ for all sets $X$ if and
only if $\varprojlim_n E_n=0$.
 Furthermore, one has $\varprojlim_n^1 E_n[X]=0$ for all sets $X$
if and only if the projective system $E_n$ satisfies
the Mittag-Leffler condition~\cite[Corollary~6]{Emm}.
 Finally, one easily checks that a projective system $(E_n)$ satisfies
both the condition of the vanishing of the projective limit
$\varprojlim_n E_n=0$ and the Mittag-Leffler condition if and
only if it is pro-zero.

 Thus, in the context of~(3), the triangulated functor
$\sD^\star(R\modl_{I\ctra})\rarrow\sD^\star(R\modl)$ is fully faithful
if and only if the projective system
$H_i(\Hom_R(T^\bu_n(R;\allowbreak s_1,\dotsc,s_m),R))$ is pro-zero for
$i\ge2$ and $\varprojlim_n H_1(\Hom_R(T^\bu_n(R;s_1,\dotsc,s_m),R))=0$.

\smallskip

 (5)~In particular, let $R$ be the commutative algebra over a field~$k$
generated by the elements $s$, $t$, and $x_i$, \ $i\ge1$, with
the relations $x_ix_j=0$ and $s^it^ix_i=0$ for all $i$, $j\ge1$
(cf.~\cite[Example~2.6]{Pmgm}).
 Let $I=(s,t)\subset R$ be the ideal generated by the elements $s$
and~$t$.
 Then $H_1(\Hom_R(T^\bu_n(R;s,t),R))=0$ for all $n\ge1$ and
$\varprojlim_n H_2(\Hom_R(T^\bu_n(R;s,t),R[X]))=0$ for all sets $X$,
while $\varprojlim_n^1 H_2(\Hom_R(T^\bu_n(R;s,t),R))\ne0$.
 So $\Delta_I(R[X])=\Lambda_I(R[X])=\R[[X]]$ for all~$X$.

 Set $\sB=R\modl_{I\ctra}=\R\contra$ and $P=\Delta_I(R)=\R$.
 One has $\bigcap_n I^n=0$ in $R$, hence the morphism
$\theta\:R\rarrow\R$ is injective.
 Thus the category $\sB$ with its projective generator $P$ and
the morphism~$\theta$ provide a counterexample to the converse
assertions to
Theorem~\ref{ext-isomorphism-right-perpendicular-thm}(c\+d)
promised in Remark~\ref{good-perp-counterex-remark}.
 Denoting by $C$ the $I$\+contramodule $R$\+module $H_1
(\Hom_R(T^\bu(R;s,t),R))=\varprojlim_n^1 H_2(\Hom_R(T^\bu_n(R;s,t),R))$,
the complex of $R$\+modules $\Hom_R(T^\bu(R;s,t),R)$ represents
a nontrivial extension class in $\Ext^2_R(P,C)$.
{\emergencystretch=1em\par}

 Furthermore, one has $H_1(\Hom_R(T^\bu_n(R;s,t),\R))=0$ for all $n\ge1$
and $\varprojlim_n\allowbreak H_2(\Hom_R(T^\bu_n(R;s,t),\R[X]))=0$ for
all sets $X$, while $\varprojlim_n^1 H_2(\Hom_R(T^\bu_n(R;s,t),
\allowbreak\R[Z]))\ne0$ for a countable set~$Z$.
 Thus the category $\sB$ with its projective generator $P$ (and
the identity isomorphism $\theta=\id_\R$) provide a counterexample to
the converse assertions to
Corollary~\ref{good-implies-n-perpendicular}(c\+d).
 Setting $Q=\Delta_I(R[Z])=\Delta_{\R I}(\R[Z])=\R[[Z]]$ and denoting by
$C$ the $(\R I)$\+contramodule $\R$\+module $H_1(\Hom_R(T^\bu(R;s,t),
\R[Z]))=\varprojlim_n^1 H_2(\Hom_R(T^\bu_n(R;s,t),\R[Z]))$, the complex
of $\R$\+modules $\Hom_R(T^\bu(R;s,t),\R[Z])$ represents a nontrivial
extension class in $\Ext^2_\R(Q,C)$.  \emergencystretch=1em\hfuzz=1.7pt

\smallskip

 (6)~Let $R$ be a commutative ring and $I\subset R$ be a finitely
generated ideal.
 Set $\R=\varprojlim_n R/I^n$, \ $\sB=\R\contra$, and $P=\R$ (see
Examples~\ref{accessible-additive-monads-examples}(2),
\ref{comm-ring-ideal-contramodules-examples}(2\+3),
and~\ref{central-adic-topological-ring}(2\+4)).
 Then the abelian category $\sB$ is an exactly embedded full subcategory
of the abelian category $R\modl_{I\ctra}$.
 We claim that every object of $R\modl_{I\ctra}$ is an extension of two
objects from $\R\contra$.

 Indeed, every object of $R\modl_{I\ctra}$ has the form $\Delta_I(C)$
for some $C\in R\modl$.
 According to~\cite[Lemma~7.5]{Pcta}, there is a natural short exact
sequence of $R$\+modules $0\rarrow K\rarrow\Delta_I(C)\rarrow
\Lambda_I(C)\rarrow0$ with an $R$\+module $K$ of the form
$K=\varprojlim_n^1 D_n$, where $D_n$ is a sequence of $R$\+modules,
each of which is annihilated by some power of the ideal~$I$.
 Now we have $D_n\in\R\contra$, and it follows that $K\in\R\contra$,
as the full subcategory $\R\contra\subset R\modl$ is closed under
the cokernels and infinite products.
 Similarly, we have $\Lambda_I(C)\in\R\contra$.

 So the full subcategory $\R\contra\subset R\modl$ is closed under
extensions if and only if it coincides with $R\modl_{I\ctra}$.
 According to~(4) and
Example~\ref{comm-ring-ideal-contramodules-examples}(3)), this holds
if and only if the projective system
$H_1(\Hom_R(T^\bu_n(R;s_1,\dotsc,s_m),R))$ satisfies
the Mittag-Leffler condition (where $s_1$,~\dots, $s_m$ is some
set of generators of the ideal~$I$).
 Furthermore, it follows that, for any derived category symbol
$\star=\b$, $+$, $-$, or~$\varnothing$, the triangulated functor
$\sD^\star(\R\contra)\rarrow\sD^\star(R\modl)$ is fully faithful if and
only if the projective system $H_i(\Hom_R(T^\bu_n(R;s_1,\dotsc,s_m),R))$
is pro-zero for all $i\ge1$, or in other words, the ideal $I\subset R$
is weakly proregular.

 It was mentioned in Example~\ref{topological-ring-0-good}(1) that
the projective generator $P\in\R\contra$ is $0$\+good.
 Now we see that $P$ is $1$\+good if and only if the full subcategory
$\R\contra\subset\R\modl$ coincides with $\R\modl_{(\R I)\ctra}$,
and this holds if and only if the projective system
$H_1(\Hom_R(T^\bu_n(R;s_1,\dotsc,s_m),\R))=0$ satisfies
the Mittag-Leffler condition.
 The projective generator $P$ is good if and only if the ideal
$\R I\subset\R$ is weakly proregular.

\smallskip

 (7)~In particular, let $I=(s)\subset R$ be a principal ideal.
 Then $H_1(\Hom_R(T_n^\bu(R;s),R))$ is the submodule of elements
annihilated by~$s^n$ in~$R$, and the maps in the projective system
formed by these modules are the multiplications with (the powers
of)~$s$.
 Hence $\varprojlim_n H_1(\Hom_R(T_n^\bu(R;s),R))=0$ if and only if
there is no $s$\+divisible $s$\+torsion in $R$, and the projective
system $H_1(\Hom_R(T_n^\bu(R;s),R))$ is pro-zero if and only if
the $s$\+torsion in $R$ is bounded.

 Furthermore, the rings $\Delta_s(R)$ and $\R=\Lambda_s(R)$ are
$s$\+contramodules, so they cannot contain $s$\+divisible $s$\+torsion.
 Thus $\R\contra=R\modl_{s\ctra}$ if and only if the $s$\+torsion
in $\Delta_s(R)$ is bounded; and $\R\contra=\R\modl_{s\ctra}$ if
and only if the $s$\+torsion in $\R$ is bounded.
 According to~\cite[Remark~6.9]{Pcta}, the latter two conditions
are equivalent.
 The projective generator $P=\R\in\R\contra$ is $1$\+good if and only
if it good, and if and only if these equivalent conditions hold.

\smallskip

 (8)~In particular, if $R$ is the commutative $k$\+algebra
from Example~\ref{accessible-additive-monads-examples}(6)
and~\cite[Example~2.6]{Pmgm}, and $I$ is the principal ideal $I=(s)$,
then the projective generator $P\in\R\contra$ is not $1$\+good.
 Indeed, the $s$\+torsion in $\R$ is not bounded, so the morphism
$\Delta_I(\R[Z])\rarrow\Lambda_I(\R[Z])$ has a nontrivial kernel
for a countable set~$Z$.
\end{exs}

\begin{exs} \label{central-ideal-good-projective-generator}
 (1)~Let $R$ be a right Noetherian associative ring and $I\subset R$ be
an ideal generated by a finite set of central elements.
 Denote by $\R=\varprojlim_n R/I^n$ the $I$\+adic completion of
the ring~$R$.
 Let $\sB$ be the locally $\aleph_1$\+presentable abelian category
$\R\contra\simeq R\modl_{I\ctra}$ (see
Examples~\ref{central-ideal-contramodules-examples}) and
$P=\R=\Delta_I(R)$ be its natural projective generator.
 Then the forgetful/natural embedding functor $\Theta\:\sB\rarrow
R\modl$ is corepresented by the projective generator $P\in\sB$ with
the natural right action of the ring $R$ in it (provided by
the natural ring homomorphism $\theta\:R\rarrow\R=\Hom_\sB(P,P)^\rop$).

 Accoding to~\cite[Corollary~C.5.6(b)]{Pcosh}, the functor $\Theta$
induces isomorphisms of the groups $\Ext^i$ for all $0\le i<\infty$.
 Moreover, one can show using~\cite[Proposition~C.5.5 and
Corollary~C.5.6(a)]{Pcosh} that the triangulated functor
$\sD^-(\sB)\rarrow\sD^-(R\modl)$ induced by the exact functor
$\Theta$ is fully faithful.

 In particular, $\R$ is also a right Noetherian ring, so one can
replace $R$ with $\R$ and consider the case $R=\R$.
 Then we have $\Theta=\Theta_P$, so we can conclude that $P=\R$
is a good projective generator of~$\sB$.

\smallskip

 (2)~More generally, let $R$ be an associative ring and $I\subset R$
be the ideal generated by a weakly proregular finite sequence of
central elements $s_1$,~\dots, $s_m\in R$, i.~e., a finite sequence of
central elements such that the projective system of $R$\+$R$\+bimodules
$H_i(\Hom_R(T_n(R;s_1,\dotsc,s_m),R))$, \ $n\ge1$, is pro-zero
for all $i>0$ (see~\cite{Sch,PSY,Pmgm}; cf.\
Example~\ref{central-ideal-contramodules-examples}(3)).
 We keep the notation $\R=\varprojlim_n R/I^n$, \ $\sB=\R\contra
= R\modl_{I\ctra}$, and $P=\R=\Delta_I(R)$.
 As above, \,$\sB$ is a locally $\aleph_1$\+presentable abelian
category, $P$ is an $\aleph_1$\+presentable projective generator
of $\sB$, and the forgetful/natural embedding functor $\Theta\:
\sB\rarrow R\modl$ is corepresented by~$P$.

 Arguing as in~\cite[Section~2]{Pmgm} (see~\cite[Theorem~6.4]{PMat}
for additional details), one can show that the triangulated functor
$\sD(\sB)\rarrow\sD(R\modl)$ induced by $\Theta$ is fully faithful.
 Hence the functor $\Theta$ induces isomorphisms of the groups
$\Ext^i$ for all $0\le i<\infty$.

 Using the appropriate generalization of~\cite[Lemma~5.3(b)]{Pmgm},
one can show that the sequence of central elements~$s_1$,~\dots,
$s_m$ is also weakly proregular in the ring~$\R$.
 So one can replace $R$ with $\R$ and consider the case $R=\R$.
 Then $\Theta=\Theta_P$, and one can conclude that $P=\R$ is
a good projective generator of~$\sB$.

\smallskip

 (3)~Even more generally, let $R$ be an associative ring and
$I\subset R$ be the ideal generated by a finite set of central
elements $s_1$,~\dots, $s_m\in R$.
 Set $\sB=R\modl_{I\ctra}$ and $P=\Delta_I(R)$
(see Example~\ref{central-ideal-contramodules-examples}(1)).
 As above, $\sB$ is a locally $\aleph_1$\+presentable abelian
category, $P$ is an $\aleph_1$\+presentable projective generator of
$\sB$, and the natural embedding functor $\Theta\:\sB\rarrow R\modl$
is corepresented by the projective generator $P\in\sB$ with
the natural right action of the ring $R$ in it (provided by the natural
ring homomorphism $\theta\:R\rarrow\Delta_I(R)=\Hom_\sB(P,P)^\rop$).
 
 Arguing as in
Example~\ref{comm-ring-ideal-good-projective-generator}(3),
one shows that the triangulated functor $\sD^\star(\sB)\rarrow
\sD^\star(R\modl)$ induced by $\Theta$, where $\star=\b$, $+$, $-$,
or~$\varnothing$, is fully faithful if and only if
$H_i(\Hom_R(T^\bu(R;s_1,\dotsc,s_m),R[X]))=0$ for all sets $X$ and
all $i>0$, or equivalently, if and only if $\varprojlim_n
H_i(\Hom_R(T^\bu_n(R;s_1,\dotsc,s_m),R[X]))=0$ for $i\ge1$ and
$\varprojlim_n^1 H_i(\Hom_R(T^\bu_n(R;s_1,\dotsc,s_m),R[X]))=0$
for $i\ge2$.
 As explained in
Example~\ref{comm-ring-ideal-good-projective-generator}(4),
this holds if and only if the projective system
$H_i(\Hom_R(T^\bu_n(R;s_1,\dotsc,s_m),\allowbreak R))$ is pro-zero for
$i\ge2$ and $\varprojlim_n H_1(\Hom_R(T^\bu_n(R;s_1,\dotsc,s_m),R))=0$.

 Furthermore, all the $I$\+contramodule left $R$\+modules admit
a unique natural extension of their left $R$\+module structure to
a left $\Delta_I(R$)\+module structure.
 Central elements in $R$ remain central in $\Delta_I(R)$.
 So one can replace $R$ with $\Delta_I(R$) and $I$ with
$\Delta_I(R)I$, and consider the case $R=\Delta_I(R)$; then
$\Theta=\Theta_P$.
 Hence $P$ is always a $1$\+good projective generator of $\sB$, as
$\sB=\R\modl_{I\ctra}$ is closed under extensions in $R\modl$.
 As we have seen, $P$ is a good projective generator of $\sB$ if
and only if $H_i(\Hom_R(T^\bu(R;s_1,\dotsc,s_m),\.\Delta_I(R)[X]))=0$
for all sets $X$ and all $i>0$, and if and only if the projective system
$H_i(\Hom_R(T^\bu_n(R;s_1,\dotsc,s_m),\Delta_I(R)))$ is pro-zero for
$i\ge2$ and $\varprojlim_n H_1(\Hom_R(T^\bu_n(R;s_1,\dotsc,s_m),
\Delta_I(R))=0$.

\smallskip

 (4)~Let $R$ be an associative ring and $I\subset R$ be an ideal
generated by a finite set of central elements.
 Set $\R=\varprojlim_n R/I^n$, \ $\sB=\R\contra$, and $P=\R$ (see
Examples~\ref{accessible-additive-monads-examples}(2),
\ref{central-ideal-contramodules-examples}(3),
and~\ref{central-adic-topological-ring}(2\+4)).

 Then the abelian category $\sB$ is an exactly embedded full subcategory
of the abelian category $\R\modl_{I\ctra}$.
 In the same way as in
Example~\ref{comm-ring-ideal-good-projective-generator}(6), one shows
that every object of $R\modl_{I\ctra}$ is an extension of two objects
from $\R\contra$.
 Hence the full subcategory $\R\contra\subset R\modl$ is closed under
extensions if and only if it coincides with $R\modl_{I\ctra}$, which
holds if and only if the projective system
$H_1(\Hom_R(T^\bu_n(R;s_1,\dotsc,s_m),R))$ satisfies the Mittag-Leffler
condition.
 Furthermore, for any derived category symbol $\star=\b$, $+$, $-$,
or~$\varnothing$, the functor $\sD^\star(\R\contra)\rarrow\sD^\star
(R\modl)$ is fully faithful if and only if the projective system
$H_i(\Hom_R(T^\bu_n(R;s_1,\dotsc,s_m),R))$ is pro-zero for all $i\ge1$.

 According to Example~\ref{topological-ring-0-good}(1), the projective
generator $P\in\R\contra$ is $0$\+good.
 Now we see that $P$ is $1$\+good if and only if the full subcategory
$\R\contra\subset\R\modl$ coincides with $\R\modl_{(\R I)\ctra}$, and
$P$ is good if and only if the projective system
$H_i(\Hom_R(T^\bu_n(R;s_1,\dotsc,s_m),\R))$ is pro-zero for all $i\ge1$.

 In particular, when $I=(s)$ is a principal ideal, one shows,
arguing as in
Example~\ref{comm-ring-ideal-good-projective-generator}(7),
that $\R\contra=R\modl_{s\ctra}$ if and only if the $s$\+torsion in
$\Delta_s(R)$ is bounded, if and only if the $s$\+torsion in $\R$
is bounded, and if and only if $\R\contra=\R\modl_{s\ctra}$.
 These are the necessary and sufficient conditions for the projective
generator $P\in\R\contra$ to be $1$\+good or, which is equivalent in
this case, good.
\end{exs}

\begin{exs} \label{comm-ring-mult-subset-good-projective-generator}
 (1)~Let $R$ be a commutative ring and $S\subset R$ be a multiplicative
subset such that $\pd_RS^{-1}R\le 1$.
 Set $\sB=R\modl_{S\ctra}$ and $P=\Delta_S(R)$
(see Example~\ref{comm-ring-mult-subset-contramodules-examples}(1)).
 Then $\sB$ is a locally presentable abelian category, $P\in\sB$
is a projective generator, and the natural embedding functor
$\Theta\:\sB\rarrow R\modl$ is corepresented by the projective
generator $P$ with the natural action of the ring $R$ in it
(provided by the natural ring homomorphism $\theta\:R\rarrow
\Delta_S(R)=\Hom_\sB(P,P)^\rop$).

 According to~\cite[first paragraph of Remark~6.8]{PMat},
the triangulated functor $\sD(\sB)\rarrow\sD(R\modl)$ induced
by $\Theta$ is fully faithful provided that there is no
$S$\+h-divisible $S$\+torsion in~$R$.
 Conversely, arguing as in~\cite[second paragraph of Remark~6.8]{PMat}
(with torsion modules replaced by contramodules), one shows that
there is no $S$\+h-divisible $S$\+torsion in $R$ whenever
the functor $\sD^\b(\sB)\rarrow\sD^\b(R\modl)$ is fully faithful.
 Thus, for any derived category symbol $\star=\b$, $+$, $-$,
or~$\varnothing$, the triangulated functor $\sD^\star(\sB)\rarrow
\sD^\star(R\modl)$ induced by $\Theta$ is fully faithful if and only if
there is no $S$\+h-divisible $S$\+torsion in the $R$\+module~$R$.

 According to~\cite[Proposition~3.1]{PMat}, \,$\Delta_S(R)$ is
a commutative ring.
 Since $S^{-1}R$ is a flat $R$\+module, the condition that
$\pd_RS^{-1}R\le\nobreak1$ implies that $\pd_{\Delta_S(R)}
S^{-1}\Delta_S(R)\le\nobreak1$.
 All the $S$\+contramodule $R$\+modules admit a unique natural
extension of their $R$\+mod\-ule structure to a $\Delta_S(R)$\+module
structure.
 So one can replace $R$ with $\Delta_S(R)$ and $S$ with its image
in $\Delta_S(R)$, and consider the case $R=\Delta_S(R)$, so that
$\Theta=\Theta_P$.
 Then the $R$\+module $R$ is $S$\+h-reduced, and therefore has no
$S$\+h-divisible $S$\+torsion.
 Thus $P$ is always a good projective generator of~$\sB$.

\smallskip

 (2)~Let $R$ be a commutative ring and $S\subset R$ be a countable
multiplicative subset.
 Set $\R=\varprojlim_{s\in S} R/sR$, \ $\sB=\R\contra$, and $P=\R$
(see Examples~\ref{comm-ring-mult-subset-contramodules-examples}(2)
and~\ref{central-mult-subset-topological-ring}(2)).
 Then the abelian category $\sB$ is an exactly embedded full
subcategory of the abelian category $R\modl_{S\ctra}$.
 Both of these are locally $\aleph_1$\+presentable abelian categories.

 We claim that every object of $R\modl_{S\ctra}$ is an extension of
two objects from $\R\contra$ (cf.\
Example~\ref{comm-ring-ideal-good-projective-generator}(6)).
 Indeed, every object of $R\modl_{S\ctra}$ has the form $\Delta_S(C)$
for some $C\in R\modl$.
 The functor $\Delta_S$ can be computed as $\Delta_S(C)=
\Ext^1_R(K^\bu,C)$ (see
Example~\ref{comm-ring-mult-subset-contramodules-examples}(1)).

 Let $s_1$, $s_2$, $s_3$,~\dots\ be a sequence of elements of $S$
containing every element of $S$ infinitely many times.
 Set $t_0=1$ and $t_n=s_1s_2\dotsm s_n$ for all $n\ge 1$.
 Then the two-term complex $K^\bu$ is the inductive limit of
the sequence of two-term complexes $R\overset{t_n}\rarrow R$
(with the morphism of complexes $(R\overset{t_{n-1}}\rarrow R)\rarrow
(R\overset{t_n}\rarrow R)$ acting by~$1$ on the leftmost terms of
the complexes and by~$s_n$ on the rightmost terms).
 Using this description of $K^\bu$, one can compute that for any
$R$\+module $C$ there is a short exact sequence of $R$\+modules
$0\rarrow\varprojlim_n^1{}_{t_n}C\rarrow\Delta_S(C)\rarrow
\varprojlim_n C/t_nC=\Lambda_S(C)\rarrow0$, where ${}_tM\subset M$
denotes the submodule of all elements annihilated by an element
$t\in R$ in an $R$\+module~$M$, and the maps ${}_{t_{n+1}}C\rarrow
{}_{t_n}C$ are the multiplications with~$s_n$
(see~\cite[Lemma~3.2]{PSl}).
 Since any $R$\+module annihilated by an element from $S$ is
an $\R$\+contramodule, and the full subcategory $\R\contra\subset
R\modl$ is closed under the kernels, cokernels, and infinite products,
it follows that both $\Lambda_S(C)$ and the kernel of the surjective
morphism $\Delta_S(C)\rarrow\Lambda_S(C)$ belong to $\R\contra$.

 So the full subcategory $\R\contra\subset R\modl$ is closed under
extensions if and only if it coincides with $R\modl_{S\ctra}$.
 According to 
Example~\ref{comm-ring-mult-subset-contramodules-examples}(3)
and~\cite[Corollary~6]{Emm}
(cf.\ Example~\ref{comm-ring-ideal-good-projective-generator}(4)),
this holds if and only if the projective system $({}_{t_n}R)_{n\ge0}$
satisfies the Mittag-Leffler condition.
 Furthermore, can easily see that $\varprojlim_n{}_{t_n}R=0$ if and only
if there is no $S$\+h-divisible $S$\+torsion in $R$; and the projective
system $({}_{t_n}R)_{n\ge0}$ is pro-zero if and only if the $S$\+torsion
in $R$ is bounded (which means that there exists $t\in S$ such that
$sr=0$, \ $s\in S$, $r\in R$, implies $tr=0$).
 Comparing these observations with~(1), we conclude that the functor
$\sD^\star(\R\contra)\rarrow\sD^\star(R\modl)$ is fully faithful if
and only if the $S$\+torsion in $R$ is bounded.

 Arguing as in
Example~\ref{comm-ring-ideal-good-projective-generator}(7) and using
an appropriate version of~\cite[Remark~6.9]{Pcta}, one can show
that $\R\contra=R\modl_{S\ctra}$ if and only if the $S$\+torsion in
$\Delta_S(R)$ is bounded, if and only if the $S$\+torsion in $\R$ is
bounded, and if and only if $\R\contra=\R\modl_{S\ctra}$.
 According to Example~\ref{topological-ring-0-good}(1), the projective
generator $P\in\R\contra$ is $0$\+good.
 Now we see that $P\in\R\contra$ is $1$\+good if and only if
it is good, and if and only if these equivalent conditions hold.
\end{exs}

\begin{exs} \label{central-mult-subset-good-projective-generator}
 (1)~Let $R$ be an associative ring and $S\subset R$ be a multiplicative
subset of central elements in $R$ such that $\pd_RS^{-1}R\le1$.
 Set $\sB=R\modl_{S\ctra}$ and $P=\Delta_S(R)$
(see Example~\ref{central-mult-subset-contramodules-examples}(1)).
 Then $\sB$ is a locally presentable abelian category, $P\in\sB$ is
a projective generator, and the natural embedding functor
$\Theta\:\sB\rarrow R\modl$ is corepresented by the projective
generator $P$ with the natural right action of the ring $R$ in it
(provided by the natural ring homomorphism $\theta\:R\rarrow
\Delta_S(R)=\Hom_\sB(P,P)^\rop$).

 Arguing as in
Example~\ref{comm-ring-mult-subset-good-projective-generator}(1), one
shows that, for any derived category symbol $\star=\b$, $+$, $-$,
or~$\varnothing$, the triangulated functor $\sD^\star(\sB)\rarrow
\sD^\star(R\modl)$ induced by $\Theta$ is fully faithful if and only if
there are is no $S$\+h-divisible $S$\+torsion in the left
$R$\+module~$R$.

 Furthermore, central elements in $R$ remain central in $\Delta_S(R)$.
 Since $S^{-1}R$ is a flat left $R$\+module, the condition that
$\pd_RS^{-1}R\le\nobreak1$ implies that $\pd_{\Delta_S(R)}
S^{-1}\Delta_S(R)\le\nobreak1$.
 All the $S$\+contramodule left $R$\+modules admit a unique natural
extension of their left $R$\+module structure to a left
$\Delta_S(R)$\+module structure.
 So one can replace $R$ with $\Delta_S(R)$ and $S$ with its image in
$\Delta_S(R)$, and consider the case $R=\Delta_S(R)$, so that
$\Theta=\Theta_P$.
 Then the left $R$\+module $R$ is $S$\+h-reduced (i.~e., there are
no nonzero morphisms into it from left $S^{-1}R$\+modules), and
therefore has no $S$\+h-divisible $S$\+torsion.
 Thus $P$ is always a good projective generator of~$\sB$.

\smallskip

 (2)~Let $R$ be an associative ring and $S\subset R$ be a countable
multiplicative subset of central elements.
 Set $\R=\varprojlim_{s\in S}R/sR$, \ $\sB=\R\contra$, and $P=\R$
(see Examples~\ref{central-mult-subset-contramodules-examples}(2\+3)
and~\ref{central-mult-subset-topological-ring}(2)).
 The abelian category $\sB$ is an exactly embedded full subcategory
of the abelian category $R\modl_{S\ctra}$.
 Both $\sB$ and $R\modl_{S\ctra}$ are locally $\aleph_1$\+presentable
categories.
 Arguing as in
Example~\ref{comm-ring-mult-subset-good-projective-generator}(2),
one shows that every object of $R\modl_{S\ctra}$ is an extension of
two objects from $\R\contra$.

 Thus the full subcategory $\R\contra\subset R\modl$ is closed
under extensions if and only if it coincides with $R\modl_{S\ctra}$.
 The functor $\sD^\star(\R\contra)\rarrow\sD^\star(R\modl)$ is fully
faithful if and only if the $S$\+torsion in $R$ is bounded.
 Furthermore, one has $\R\contra=R\modl_{S\ctra}$ if and only if
the $S$\+torsion in $\Delta_S(R)$ is bounded, if and only if
the $S$\+torsion in $\R$ is bounded, and if and only if
$\R\contra=\R\modl_{S\ctra}$.
 The projective generator $P\in\R\contra$ is always $0$\+good.
 The previous several equivalent condition are necessary and
sufficient for $P$ to be $1$\+good or, which is equivalent in
this situation, good.
\end{exs}

 The next series of examples explains the reason for the ``good
projective generator'' terminology.

\begin{exs}
 (1)~Let $\sC$ be an accessible additive category (notice that any
such category contains the images of idempotent endomorphisms of
its objects~\cite[Observation~2.4]{AR}.
 Let $M\in\sC$ be an object.
 According to Example~\ref{additive-monads-examples}(2), there exists
a unique abelian category $\sB$ with enough projective objects for
which there is an equivalence of additive categories $\sB_\proj\simeq
\Add_\sC(M)$.
 The category $\sB$ comes endowed with a natural projective
generator $P\in\sB_\proj$ corresponding to the free $\boT_M$\+module
with one generator $\boT_M(*)\in\boT_M\modl$ and to the object
$M\in\Add_\sC(M)$.
 If the object $M\in\sC$ is $\kappa$\+presentable, where $\kappa$~is
a regular cardinal, then the abelian category $\sB$ is locally
$\kappa$\+presentable and the object $P\in\sB$ is $\kappa$\+presentable.

 We will say that an object $M\in\sC$ is \emph{$n$\+good}, where
$n\ge0$ is an integer, if the projective generator $P\in\sB$ is
$n$\+good.
 In other words, denoting by $\S$ the ring $\Hom_\sC(M,M)^\rop$,
the object $M\in\sC$ is $n$\+good if and only if the functor
$\Theta=\Hom_\sB(P,{-})\:\sB\rarrow\S\modl$ induces isomorphisms on all
the groups $\Ext^i$ with $0\le i\le n$.
 An object $M\in\sC$ is \emph{good} if it is $n$\+good for all $n\ge0$,
or in other words, if the projective generator $P\in\sB$ is good.
 According to Proposition~\ref{fully-faithful-b-implies-minus},
an object $M$ is good if and only if the triangulated functor
$\sD^-(\sB)\rarrow\sD^-(\S\modl)$ induced by $\Theta$ is fully
faithful.

 According to Theorem~\ref{copower-good-projective-generator},
if the object $M\in\sC$ is $\kappa$\+presentable (or, more generally,
$\kappa$\+generated), then for any cardinal~$\lambda$ such that
$\lambda^+\ge\kappa$ the object $M^{(\lambda)}\in\sC$ is good.

\smallskip

 (2)~In particular, let $R$ be an associative ring and $M$ be
a left $R$\+module.
 Substituting $\sC=R\modl$ into the definitions in~(1), we obtain
the definition of what it means for $M$ to be \emph{$n$\+good}
or \emph{good}.
 Specifically, it means that the forgetful functor $\S\contra
\rarrow\S\modl$, where $\S=\Hom_R(M,M)^\rop$ is the topological
ring from Example~\ref{accessible-additive-monads-examples}(3),
should induce isomorphisms on all the groups $\Ext^i$ with
$0\le i\le n$, or with $i\ge0$, respectively.
 An $R$\+module $M$ is good if and only if the triangulated functor
$\sD^-(\S\contra)\rarrow\sD^-(\S\modl)$ is fully faithful.
 If a left $R$\+module $M$ admits a set of generators of
cardinality~$\lambda$, then the left $R$\+module $M^{(\lambda)}$
is good.

\smallskip

 (3) Any finitely generated left $R$\+module $M$ is good.
 Indeed, in this case the monad $\boT_M$ is isomorphic to the monad
$\boT_S$ associated with the discrete ring $S=\Hom_R(M,M)^\rop$
as in Example~\ref{accessible-additive-monads-examples}(1),
so the functor $\Theta=\Hom_\sB(P,{-})\:\sB\rarrow S\modl$
is an equivalence of abelian categories.

\smallskip

 (4)~Let $R$ be a commutative ring and $S\subset R$ be a multiplicative
subset such that all elements of $S$ are nonzero-divisors in~$R$.
 Assume that $\pd_RS^{-1}R\le 1$ and set $K=S^{-1}R/R\in R\modl$.
 Then $\sB=R\modl_{S\ctra}=\R\contra$, where $\R=\Delta_S(R)=
\Lambda_S(R)$ (see
Example~\ref{comm-ring-mult-subset-contramodules-examples}(3)),
is the corresponding locally presentable abelian category with
enough projective objects such that $\sB_\proj\simeq\Add_{R\modl}(K)$,
and $P=\R\in\sB$ is its natural projective generator.
 Indeed, the functors $Q\longmapsto K\ot_RQ$ and $N\longmapsto
\Hom_R(K,N)$ establish an equivalence between the additive categories
$\sB_\proj\ni Q$ and $\Add_{R\modl}(K)\ni N$ assigning the object
$P\in\sB_\proj$ to the object $K\in\Add_{R\modl}(K)$ and the object
$P^{(X)}=\Delta_S(R[X])=\Lambda_S(R[X])\in\sB_\proj$ to the object
$K^{(X)}\in\Add_{R\modl}(K)$ for all sets~$X$
(cf., e.~g., \cite[Corollary~5.2]{PMat}).
 According to
Example~\ref{comm-ring-mult-subset-good-projective-generator}(1),
the projective generator $P\in\sB$ is good, so the $R$\+module
$K=S^{-1}R/R$ is good.

\smallskip

 (5)~More generally, let $R$ be an associative ring and $S\subset R$
be a multiplicative subset consisting of some central nonzero-divisors
in~$R$.
 Assume that $\pd_RS^{-1}R\le\nobreak1$ and set $K=S^{-1}R/R\in R\modl$.
 Then $\sB=R\modl_{S\ctra}=\R\contra$, where $\R=\Delta_S(R)=
\Lambda_S(R)$ (see
Example~\ref{central-mult-subset-contramodules-examples}(3)),
is the corresponding locally presentable abelian category with
enough projective objects such that $\sB_\proj\simeq\Add_{R\modl}(K)$,
and $P=\R\in\sB$ is its natural projective generator.
 Indeed, just as in~(4), the functors $Q\longmapsto K\ot_RQ$ and
$N\longmapsto \Hom_R(K,N)$ (where $K$ is viewed as
an $R$\+$R$\+bimodule) establish an equivalence between
the additive categories $\sB_\proj$ and $\Add_{R\modl}(K)$ assigning
$P\in\sB_\proj$ to $K\in\Add_{R\modl}(K)$ and
$P^{(X)}=\Delta_S(R[X])=\Lambda_S(R[X])\in\sB_\proj$ to
$K^{(X)}\in\Add_{R\modl}(K)$ for all sets~$X$
(cf.~\cite[Theorem~4.7]{FN} or~\cite[Theorem~1.3]{BP2}).
 According to
Example~\ref{central-mult-subset-good-projective-generator}(1),
the projective generator $P\in\sB$ is good, so the left $R$\+module
$K=S^{-1}R/R$ is good.

\smallskip

 (6)~Let $T$ be a \emph{good $n$\+tilting} left $R$\+module in
the sense of~\cite{BMT} (where $n\ge0$ is an integer).
 Then, according to~\cite[Theorem~2.2(2)]{BMT}, the triangulated
functor $\boR\Hom_R(T,{-})\:\sD(R\modl)\rarrow\sD(\S\modl)$ is fully
faithful.

 On the other hand, by~\cite[Proposition~2.3]{FMS}
or~\cite[Theorem~4.5]{Ba} (cf.\ the discussion
in~\cite[Proposition~8.2]{PS}), the triangulated functor
$\boR\Hom_R(T,{-})\:\sD(R\modl)\rarrow\sD(\S\contra)$ is
an equivalence of triangulated categories.
 (Notice that, by~\cite[Proposition~4.3]{Ba} and our
Examples~\ref{additive-monads-examples}(2)
and~\ref{accessible-additive-monads-examples}(3), the heart
$\sB\subset\sD^\b(R\modl)$ of the tilting t\+structure associated
with $T$ is equivalent to the abelian category $\S\contra$
\,\cite[Proposition~2.6 and Theorem~7.1]{PS}.)

 Thus the triangulated functor $\sD(\S\contra)\rarrow\sD(\S\modl)$
induced by the forgetful functor $\S\contra\rarrow\S\modl$ is
fully faithful, so $T$ is a good left $R$\+module in the sense of our
definition.
\end{exs}

\Section{$\kappa$-Flat Modules and the Fully Faithful Triangulated
Functor} \label{lambda-flat-secn}

 The aim of this section is to prove
Theorem~\ref{copower-good-projective-generator}.
 For this purpose, we develop the theory of $\kappa$\+flat modules
over associative rings, generalizing the Govorov--Lazard
characterization of flat modules to arbitrary regular cardinals.

 Let $R$ be an associative ring and $\kappa$~be a regular cardinal.

\begin{thm} \label{kappa-flat-modules}
 The following conditions on a left $R$\+module $F$ are equivalent:
\par
\textup{(a)} every morphism into $F$ from a $\kappa$\+presentable
left $R$\+module factorizes through a projective left $R$\+module; \par
\textup{(b)} every morphism into $F$ from a $\kappa$\+presentable
left $R$\+module factorizes through a free left $R$\+module with
less than~$\kappa$ generators; \par
\textup{(c)} $F$ is the colimit of a $\kappa$\+filtered diagram of
projective left $R$\+modules; \par
\textup{(d)} the cocone formed by all the morphisms into $F$ from
projective left $R$\+modules with less than~$\kappa$ generators (and
all the morphisms between the latter forming commutative triangles
with the morphism into~$F$) is a $\kappa$\+filtered colimit cocone; \par
\textup{(e)} the cocone formed by all the morphisms into $F$ from
free left $R$\+modules with less than~$\kappa$ generators (and
all the morphisms between the latter forming commutative triangles
with the morphism into~$F$) is a $\kappa$\+filtered colimit cocone.
\end{thm}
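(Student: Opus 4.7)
My plan is to establish the cycle $(b) \Rightarrow (e) \Rightarrow (d) \Rightarrow (c) \Rightarrow (a) \Rightarrow (b)$. The equivalences $(a) \Leftrightarrow (b)$ and $(d) \Leftrightarrow (e)$ both rest on the fact that a projective left $R$\+module $P$ generated by fewer than~$\kappa$ elements is a direct summand of a free module $R^{(J)}$ with $|J| < \kappa$: one picks a surjection $R^{(J)} \twoheadrightarrow P$ from a small generating set, and projectivity supplies the splitting. Using this, any factorization $M \to P \to F$ of a morphism from a $\kappa$\+presentable $M$ through an arbitrary projective $P$ can be replaced with a factorization through a free module with fewer than~$\kappa$ generators, by localizing to the free summand of some ambient free $R^{(I)} = P \oplus P'$ spanned by the coordinates supporting the image of $M$; this yields $(a) \Rightarrow (b)$, while $(b) \Rightarrow (a)$ is trivial. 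Similarly, every object $P \to F$ of the projective diagram admits an arrow in the indexing category to an object $R^{(J)} \to F$ of the free diagram, so the free subdiagram is cofinal, giving $(d) \Leftrightarrow (e)$. The implication $(d) \Rightarrow (c)$ is tautological, and $(c) \Rightarrow (a)$ follows because $\Hom_R(M, {-})$ preserves $\kappa$\+filtered colimits for every $\kappa$\+presentable $M$.

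The real work is therefore $(b) \Rightarrow (e)$. Let $\mathcal{D}$ denote the category whose objects are pairs $(J, \phi)$ with $|J| < \kappa$ and $\phi \colon R^{(J)} \rarrow F$ an $R$\+module map, with morphisms $R^{(J)} \rarrow R^{(J')}$ making a commutative triangle with the structural maps to~$F$. I will first show that $\mathcal{D}$ is $\kappa$\+filtered: given a diagram of fewer than~$\kappa$ objects of $\mathcal{D}$ (with fewer than~$\kappa$ connecting arrows), form its colimit $N$ in $R\modl$. Since $N$ is a colimit of fewer than~$\kappa$ free modules of rank less than~$\kappa$ along fewer than~$\kappa$ morphisms, it is $\kappa$\+presentable, and the induced map $N \rarrow F$ factors, by~$(b)$, through some $R^{(J)} \rarrow F$ with $|J| < \kappa$, providing the required cocone. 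Next I will show that the canonical map $\psi \colon \varinjlim_{\mathcal{D}} R^{(J)} \rarrow F$ is an isomorphism. Surjectivity is clear, since for each $r \in F$ the object $r \colon R \rarrow F$ lies in $\mathcal{D}$ and $1 \in R$ maps to~$r$. For injectivity, if $x \in R^{(J)}$ has image zero in $F$ under some $\phi$, then $\phi$ descends to a map $R^{(J)}/Rx \rarrow F$, and the domain is again $\kappa$\+presentable; by~$(b)$, this descent factors through some $(J', \phi') \in \mathcal{D}$, so the composite $R^{(J)} \twoheadrightarrow R^{(J)}/Rx \rarrow R^{(J')}$ is a morphism of $\mathcal{D}$ killing~$x$, and $[x] = 0$ in the colimit.

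The main (though rather mild) obstacle is a bookkeeping point inside $(b) \Rightarrow (e)$: one must verify at each step that the factorization supplied by $(b)$ commutes with the maps to $F$ induced from the original data, so that the arrow thus constructed is genuinely a morphism of $\mathcal{D}$ rather than merely an $R$\+module morphism. This is precisely where $(b)$ is used essentially, and it follows from the defining property of the factorization through a $\kappa$\+presentable quotient. Once this is secured, the surjectivity-injectivity argument above closes $(b) \Rightarrow (e)$, and the remaining implications complete the cycle of equivalences.
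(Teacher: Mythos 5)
Your proposal is correct and follows essentially the same strategy as the paper's proof: the easy implications are handled the same way, and the substantive direction uses condition~(b) to show that the canonical diagram of free modules with fewer than~$\kappa$ generators over $F$ is $\kappa$\+filtered and has colimit~$F$. The differences are only in the bookkeeping --- you establish $\kappa$\+filteredness in one stroke by applying~(b) to the ($\kappa$\+presentable) colimit of a $\kappa$\+small subdiagram and prove injectivity by applying~(b) again to $R^{(J)}/Rx$, whereas the paper verifies the coproduct and coequalizer axioms of filteredness separately and deduces injectivity from filteredness alone; and you derive~(d) from~(e) by cofinality rather than rerunning the argument for projectives, which works but does require a word on transferring $\kappa$\+filteredness from the free diagram to the projective one via the splitting $P\rarrow R^{(J)}\rarrow P$.
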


\begin{proof}
 (a)~$\Longrightarrow$~(b): given a left $R$\+module morphism
$E\rarrow F$, where $E$ is $\kappa$\+presentable, one first
factorizes the morphism $E\rarrow F$ through a projective left
$R$\+module $P$, then replaces $P$ with a free left $R$\+module
$P'$ in which $P$ is a direct summand, and finally replaces
$P'$ with its free submodule $P''$ with less than~$\kappa$
generators containing the image of the morphism $E\rarrow P'$.

 (c)~$\Longrightarrow$~(a): let $F=\varinjlim_i P_i$ be a presentation
of $F$ as the colimit of a $\kappa$\+filtered diagram of projective
left $R$\+modules, and let $E$ be a $\kappa$\+presentable left
$R$\+module.
 Then $\Hom_R(E,\varinjlim_i P_i)=\varinjlim_i\Hom_R(E,P_i)$, hence
any left $R$\+module morphism $E\rarrow F$ factorizes through one
of the modules~$P_i$.

 (b)~$\Longrightarrow$~(d): denote our cocone by $(P_i\to F)_{i\in I}$
(where $I$ is the set of all pairs $i=(P_i,f_i)$, with $P_i$
a projective left $R$\+module with less than~$\kappa$ generators
and $f_i\:P_i\rarrow F$ a left $R$\+module morphism). 
 Since the free left $R$\+module with one generator $R$ is a projective
left $R$\+module with less than~$\kappa$ generators, and every element
of $F$ belongs to the image of some left $R$\+module morphism
$R\rarrow F$, the natural morphism $\varinjlim_{i\in I} P_i\rarrow F$ is
surjective.

 Furthermore, given two morphisms $f_i\:P_i\rarrow F$ and
$f_j\:P_j\rarrow F$ in the cocone, and two elements $p'\in P_i$
and $p''\in P_j$ such that $f_i(p')=f_j(p'')\in F$, there exist
left $R$\+module morphisms $R\rarrow P_i$ and $R\rarrow P_j$
taking the unit element $1\in R$ to $p'$ and $p''$, respectively.
 Setting $P_k=R$ and denoting by $f_k\:P_k\rarrow F$ the morphism
taking~$1$ to $f_i(p')=f_j(p'')$, we have morphisms $k\to i$
and $k\to j$ in the diagram~$I$.
 If $I$~is filtered, there exists a morphism $f_l\:P_l\rarrow F$ in
the cocone and morphisms $i\to l$, $j\to l$ in the diagram~$I$
such that the square diagram $k\to i\to l$, \ $k\to j\to l$
is commutative.
 It follows that images of the elements $p'\in P_i$ and $p''\in P_j$
coincide in $P_l$, and hence also in $\varinjlim_{i\in I}P_i$.
 Thus the morphism $\varinjlim_{i\in I}P_i\rarrow F$ is an isomorphism
whenever $I$~is filtered.

 It remains to show that the diagram~$I$ is $\kappa$\+filtered
whenever (b)~holds.
 Any collection of less than $\kappa$ morphisms $L_i\rarrow F$,
where $L_i$ are projective left $R$\+modules with less than~$\kappa$
generators, factorizes through the morphism
$M=\bigoplus_i L_i\rarrow F$, and $M$ is also a projective left
$R$\+module with less than~$\kappa$ generators.

 Now let $h_i\:L\rarrow M$ be a collection of less than~$\kappa$
morphisms between two projective left $R$\+modules $L$ and $M$
with less than~$\kappa$ generators each.
 Then the coequalizer $E$ of the whole system of morphisms
$h_i\:L\rarrow M$ is a $\kappa$\+presentable left $R$\+module.
 Given left $R$\+module morphisms $f\:L\rarrow F$ and $g\:M\rarrow F$
forming commutative triangles with all the morphisms~$h_i$,
we have the induced left $R$\+module morphism $E\rarrow F$.
 Assuming~(b), the latter morphism factorizes through a projective
left $R$\+module $P$ with less than~$\kappa$ generators.
 Hence we have a morphism $M\rarrow P$ in the diagram $I$ whose
compositions with all the morphisms $h_i\:L\rarrow M$ are equal
to one and the same morphism $L\rarrow P$.

 The proof of (b)~$\Longrightarrow$~(e) is similar to
(b)~$\Longrightarrow$~(d).
 The implications (d)~$\Longrightarrow$~(c), \ 
(e)~$\Longrightarrow$~(c), and (b)~$\Longrightarrow$~(a) are obvious.
\end{proof}

 A left $R$\+module $F$ is said to be \emph{$\kappa$\+flat} if it
satisfies one of the equivalent conditions of
Theorem~\ref{kappa-flat-modules}.

\begin{lem} \label{kappa-flat-closure-properties}
\textup{(a)} The class of all $\kappa$\+flat left $R$\+modules is
closed under extensions, kernels of surjective morphisms, and
$\kappa$\+filtered colimits. \par
\textup{(b)} Any short exact sequence of $\kappa$\+flat left
$S$\+modules is a $\kappa$\+filtered colimit of split short exact
sequences of $\kappa$\+flat left $S$\+modules.
\end{lem}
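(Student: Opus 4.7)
For part~(a), I plan to verify each of the three closure properties separately, using condition~(a) of Theorem~\ref{kappa-flat-modules}: a module $F$ is $\kappa$\+flat iff every morphism into $F$ from a $\kappa$\+presentable module factors through a projective. Closure under $\kappa$\+filtered colimits is immediate, since a morphism from a $\kappa$\+presentable $E$ into $F=\varinjlim_j F_j$ factors through some $F_j$ by $\kappa$\+presentability and then through a projective by $\kappa$\+flatness of~$F_j$. For closure under extensions, given $0\to F'\to F\to F''\to 0$ with $F',F''$ both $\kappa$\+flat and a morphism $E\to F$, I factor $E\to F''$ through a projective $P''$ with less than~$\kappa$ generators, lift $P''\to F''$ to $P''\to F$ by projectivity, subtract to obtain $E\to F'$, factor that through a projective $P'$ with less than~$\kappa$ generators, and combine to get $E\to P'\oplus P''\to F$.

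The main obstacle in~(a) is closure under kernels of surjections: given $0\to F'\to F\to F''\to 0$ with $F,F''$ both $\kappa$\+flat, I need to show $F'$ is $\kappa$\+flat. For a morphism $E\to F'$ with $E$ being $\kappa$\+presentable, I factor $E\to F'\to F$ through a projective $P$ with less than~$\kappa$ generators. The composition $P\to F''$ need not vanish, but $F''$ being a $\kappa$\+filtered colimit of projective modules $F''_i$ with less than~$\kappa$ generators and $P$ being $\kappa$\+presentable, $P\to F''$ factors through some~$F''_i$; since $E\to P\to F''$ is zero and the colimit is $\kappa$\+filtered, passing to a later stage~$F''_j$ makes $E\to P\to F''_j$ already vanish. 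Lifting $F''_j\to F''$ to a map into~$F$ and subtracting from $P\to F$ yields a map $P\to F$ with image in~$F'$ that factors $E\to F'$ through the projective~$P$.

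For part~(b), I plan to exhibit the SES as a colimit of split SESs indexed by an explicit $\kappa$\+filtered category~$\mathcal{I}$. Its objects are tuples $(P',P'',g',\tilde g)$ with $P',P''$ projective modules having less than~$\kappa$ generators and morphisms $g'\colon P'\to F'$, $\tilde g\colon P''\to F$; a morphism $(P',P'',g',\tilde g)\to(Q',Q'',h',\tilde h)$ is a triple $(\alpha'\colon P'\to Q',\alpha''\colon P''\to Q'',\beta\colon P''\to Q')$ satisfying $h'\alpha'=g'$ and $\tilde h\alpha''+\iota h'\beta=\tilde g$, where $\iota\colon F'\to F$ is the given embedding. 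Each object carries the split SES $0\to P'\to P'\oplus P''\to P''\to 0$ with vertical map $(g',\iota g'+\tilde g,\pi\tilde g)$ to the given SES, and a morphism in $\mathcal{I}$ is precisely a morphism of split SESs over the given one. Direct sums give filteredness for families of fewer than~$\kappa$ objects; for parallel morphisms I coequalize by taking cokernels of the componentwise differences (which are $\kappa$\+presentable), factor through projectives with less than~$\kappa$ generators mapping to~$F$ and~$F'$ using $\kappa$\+flatness, and construct the new $P'$\+side by a joint factorization that absorbs both the difference $d$ between $\tilde h$ and $\tilde k\gamma''$ (which lands in~$F'$) and the mixed constraint on~$\beta$.

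Finally, the colimit SES, which exists by exactness of $\kappa$\+filtered colimits in $R\modl$, maps to the given SES. Surjectivity of the middle column is immediate from the object $(0,R,0,\tilde g)$ with $\tilde g(1)$ equal to a prescribed element of~$F$; injectivity of the left column uses $\kappa$\+flatness of $F'$: given $x\in P'$ with $g'(x)=0$, push $g'$ forward along a $\kappa$\+filtered representation of $F'$ until $x$ itself is sent to zero, yielding a suitable morphism in~$\mathcal{I}$. The hard part of the proof is injectivity of the middle column: given $(x',x'')\in P'\oplus P''$ mapping to $0\in F$, so that $y:=\tilde g(x'')=-\iota g'(x')$ lies in~$F'$, the plan is to combine the previous constructions by first pushing $x''$ forward in a $\kappa$\+flat representation of $F''$ until $\alpha''(x'')=0$, then compensating for the error $y$ via the $\beta$\+component using $\kappa$\+flatness of $F'$, and finally factoring the joint map $(h',d)\colon Q'\oplus Q''\to F'$ through a projective $Q'$ with less than~$\kappa$ generators via the quotient $(Q'\oplus Q'')/\langle(x',x'')\rangle$ so that $\alpha'(x')+\beta(x'')=0$. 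Once left and middle columns are isomorphisms, the right column follows by the snake lemma.
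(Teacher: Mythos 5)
Your plan is correct, but it diverges from the paper most notably in part~(b). For part~(a) you are close to the paper's argument: your treatment of $\kappa$\+filtered colimits is identical, and your direct two\+projective patching argument for extensions and your colimit\+based argument for kernels of surjections are minor (valid) variants of the paper's, which handles kernels by factoring the induced map $P/E\rarrow F''$ through a projective and handles extensions as a corollary of part~(b). For part~(b) the paper's proof is essentially one sentence: writing the quotient term $H$ of $0\rarrow F\rarrow G\rarrow H\rarrow 0$ as a $\kappa$\+filtered colimit of projectives $Q_i$ and pulling the sequence back along each $Q_i\rarrow H$ yields split sequences $0\rarrow F\rarrow F\oplus Q_i\rarrow Q_i\rarrow 0$ whose colimit is the given one --- note that the statement only requires the terms of the approximating sequences to be $\kappa$\+flat, so keeping the fixed subobject $F$ is permitted. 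You instead build an explicit $\kappa$\+filtered category of split sequences of projectives with fewer than~$\kappa$ generators mapping to the given sequence; this proves a strictly stronger statement, but at the price of the delicate verifications you sketch: the coequalization of parallel morphisms (where the $\beta$\+component must be corrected using the identity $\tilde h(\alpha''_k-\alpha''_0)=\iota h'(\beta_0-\beta_k)$, and where the factorization of the coequalizer on the $P''$\+side must pass through $F''$ and be lifted back to $F$, since $\tilde h$ itself does not descend to the coequalizer) and the injectivity of the middle column (your device of factoring $(g',e)\colon P'\oplus P''\rarrow F'$ through the quotient by $(x',x'')$ does work). All of these details can be filled in, so your proof is correct; it just buys a refinement the lemma does not ask for, whereas the paper's pullback trick buys brevity and immediately yields closure under extensions as well.
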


\begin{proof}
 If $G=\varinjlim_i F_i$ is the colimit of a $\kappa$\+filtered diagram
of modules and $E$ is a $\kappa$\+presentable module, then any morphism
$E\rarrow G$ factorizes through one of the modules~$F_i$.
 If, in addition, any morphism $E\rarrow F_i$ factorizes through
a projective module, then any morphism $E\rarrow G$ also factorizes
through a projective module.

 Let $0\rarrow F\rarrow G\rarrow H\rarrow0$ be a short exact sequence
of left $R$\+modules.
 Suppose that the left $R$\+modules $G$ and $H$ are $\kappa$\+flat.
 Let $E$ be a $\kappa$\+presentable left $R$\+module and
$E\rarrow F$ be a left $R$\+module morphism.
 Then the composiiton $E\rarrow F\rarrow G$ factorizes through
a free left $R$\+module $P$ with less than~$\kappa$ generators.
 Now the composition $P\rarrow G\rarrow H$ factorizes through
the cokernel $C=P/E$ of the morphism $E\rarrow P$.
 Since the left $R$\+module $C$ is also $\kappa$\+presentable,
the morphism $C\rarrow H$ factorizes through a projective left
$R$\+module~$Q$.

 Pick an arbitrary lifting $Q\rarrow G$ of the morphism $Q\rarrow H$
and subtract the composition $P\rarrow C\rarrow Q\rarrow G$ from
the morphism $P\rarrow G$ that we have.
 The resulting morphism $P\rarrow G$ is annihilated by the composition
with the morphism $G\rarrow H$, and therefore factorizes through
the monomorphism $F\rarrow G$.
 We have obtained a morphism $P\rarrow F$ whose composition with
the morphism $E\rarrow P$ is equal to our original morphism
$E\rarrow F$.
 Thus the left $R$\+module $F$ is $\kappa$\+flat.

 Now suppose that the left $R$\+modules $F$ and $H$ are $\kappa$\+flat.
 Then $H$ is a $\kappa$\+filtered colimit of projective left
$R$\+modules $Q_i$, hence it follows that the short exact sequence
$0\rarrow F\rarrow G\rarrow H\rarrow0$ is a $\kappa$\+filtered colimit
of split short exact sequences $0\rarrow F\rarrow F\oplus Q_i
\rarrow Q_i\rarrow0$ (proving the assertion~(b)).
 In particular, the left $R$\+module $G$ is a $\kappa$\+filtered
colimit of left $R$\+modules isomorphic to $F\oplus Q_i$.
 The latter are obviously $\kappa$\+flat; and a $\kappa$\+filtered
colimit of $\kappa$\+flat modules is $\kappa$\+flat, as we have
already seen.
 Thus the left $R$\+module $G$ is $\kappa$\+flat.
\end{proof}

\begin{rem}
 We are not aware of a definition equivalent to our notion of
$\kappa$\+flatness appearing anywhere in the previously existing
literature, but certainly there are all kinds of similar or
related definitions known for many years.

 In particular, there is the classical notion of
a \emph{$\kappa$\+free} module~\cite{EM}.
 In the similar spirit (cf.~\cite{Ek}), one can define
\emph{$\kappa$\+projective} modules.
 Specifically, an $R$\+module $M$ is said to be
$\kappa$\+projective if it admits a system of submodules $M_\alpha$
(which is said to \emph{witness} the $\kappa$\+projectivity of~$M$)
such that (1)~every $R$\+module $M_\alpha$ is projective with less
than~$\kappa$ generators; (2)~every subset of $M$ of the cardinality
less than~$\alpha$ is contained in one of the modules~$M_\alpha$;
and (3)~the set of submodules $M_\alpha\subset M$ is closed under
unions of well-ordered chains of length smaller than~$\kappa$.
 An $R$\+module $M$ is said to be \emph{$\kappa$\+projective
in the weak sense} if it has a witnessing system of submodules
satisfying the conditions~(1) and~(2) \cite[Section~IV.1]{EM}.

 An $R$\+module is $\kappa$\+projective in the weak sense if and only
if it is the colimit of a $\kappa$\+filtered diagram of projective
$R$\+modules with less than~$\kappa$ generators \emph{and injective
morphisms between them}.
 Indeed, given a module that is $\kappa$\+projective in the weak
sense, all its projective submodules with less than $\kappa$ generators
form such a diagram.
 Thus $\kappa$\+projectivity is a stronger condition than
$\kappa$\+flatness.
 When $R$ is a left hereditary ring, there is no difference between
$\kappa$\+projectivity and $\kappa$\+projectivity in the weak sense
for left $R$\+modules.
 Assuming additionally that every left ideal in $R$ is generated by
less than~$\kappa$ elements (so any $\kappa$\+generated left
$R$\+module is $\kappa$\+presentable), these two conditions are also
equivalent to $\kappa$\+flatness.
 Indeed, let $F$ be a $\kappa$\+flat left $R$\+module.
 Given a subset of cardinality less than~$\kappa$ in $F$,
denote by $E$ the $R$\+submodule in $F$ generated by this subset.
 Then $E$ is $\kappa$\+presentable, so the injective morphism
$E\rarrow F$ factorizes through a projective left $R$\+module~$P$.
 It follows that $E$ is a submodule in $P$, hence a projective
module itself.

 Another classical concept closely related to $\kappa$\+flatness is
that of \emph{$\kappa$\+purity}.
 A left $R$\+module morphism $f\:P\rarrow Q$ is said to be
a \emph{$\kappa$\+pure epimorphism} if, for any $\kappa$\+presentable
left $R$\+module $E$, any left $R$\+module morphism $E\rarrow Q$
factorizes through~$f$.
 If this is the case, $Q$ is said to be a \emph{$\kappa$\+pure
quotient} of~$P$ \cite[Chapter~7]{JL}, \cite{AR2}.
 One can easily see that an $R$\+module is $\kappa$\+flat if and only
if it is a $\kappa$\+pure quotient of a projective $R$\+module.

 Finally, some of the assertions and proofs of our 
Theorem~\ref{kappa-flat-modules} and
Lemma~\ref{kappa-flat-closure-properties} resemble those of
the theory of \emph{modules with support in a subcategory} (of finitely
presented modules), as developed in~\cite{Len}
(see~\cite[Propositions~2.1\+-2]{Len}).
\end{rem}

 The following result, whose nonadditive version goes back
to~\cite[Section~2.2]{Isb}, was rediscovered and discussed
in the additive/abelian context in~\cite[Theorem~6.10]{PS}.

\begin{thm} \label{copower-0-good-PS}
 Let\/ $\sB$ be a locally $\kappa$\+presentable abelian category
and $P\in\sB$ be a $\kappa$\+presentable projective generator of $P$
(where $\kappa$~is a regular cardinal).
 Let $\lambda$ be a cardinal such that $\lambda^+\ge\kappa$ and
let $Q=P^{(\lambda)}\in\sB$ be the coproduct of $\lambda$~copies of
$P$ in\/~$\sB$.
 Denote by $S$ the\/ ring $\Hom_\sB(Q,Q)^\rop$.
 Then the exact functor\/ $\Theta=\Hom_\sB(Q,{-})\:\sB\rarrow S\modl$
corepresented by $Q$ is fully faithful.
\end{thm}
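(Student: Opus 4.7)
The plan is to establish full faithfulness of $\Theta$ on hom-sets. Since $Q$ is a projective generator of $\sB$, every object $B\in\sB$ admits a presentation $Q^{(Y)}\rarrow Q^{(X)}\rarrow B\rarrow0$; because $Q$ is projective, $\Theta$ is exact, and the natural comparison map $\Hom_\sB(B,C)\rarrow\Hom_S(\Theta(B),\Theta(C))$ arises as the equalizer of the analogous maps for $Q^{(X)}$ and $Q^{(Y)}$. It therefore suffices to show that for every set $X$ and every $C\in\sB$, the natural map
\[
 \Hom_\sB(Q^{(X)},C)\lrarrow\Hom_S(\Theta(Q^{(X)}),\Theta(C))
\]
is a bijection.

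First I will verify two cardinal-arithmetic facts. The coproduct $Q=P^{(\lambda)}$ is $\lambda^+$-presentable in $\sB$, since $\Hom_\sB(Q,-)=\prod_{\alpha<\lambda}\Hom_\sB(P,-)$ and a product of $\lambda$ functors each preserving $\kappa$-filtered colimits preserves $\lambda^+$-filtered ones (using $\lambda^+\ge\kappa$ together with the fact that any $\lambda$-indexed family in a $\lambda^+$-filtered diagram has a common upper bound). Moreover, for every nonempty set $Z$ with $|Z|\le\lambda$ one has $Q^{(Z)}\cong Q$ in $\sB$, because $Q^{(Z)}=P^{(\lambda\times Z)}$ and $|\lambda\times Z|=\lambda$. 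Consequently, for such $Z$, the comparison map $\Hom_\sB(Q^{(Z)},C)\rarrow\Hom_S(\Theta(Q^{(Z)}),\Theta(C))$ is an isomorphism: after transport along any chosen isomorphism $Q^{(Z)}\cong Q$ it becomes the tautology $\Hom_\sB(Q,C)=\Theta(C)=\Hom_S(S,\Theta(C))$.

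To handle arbitrary $X$, I will present the coproduct as the $\lambda^+$-filtered colimit
\[
 Q^{(X)}\;=\;\varinjlim_{Z\subseteq X,\,|Z|\le\lambda} Q^{(Z)}
\]
indexed by the $\lambda^+$-directed poset of subsets of $X$ of cardinality at most $\lambda$. Applying $\Hom_\sB(-,C)$ turns this into a projective limit, and by the $\lambda^+$-presentability of $Q$ the functor $\Theta$ sends the colimit to $\Theta(Q^{(X)})=\varinjlim_Z\Theta(Q^{(Z)})$, whence $\Hom_S(\Theta(Q^{(X)}),\Theta(C))=\varprojlim_Z\Hom_S(\Theta(Q^{(Z)}),\Theta(C))$. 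The desired bijection is then the projective limit of the term-wise bijections from the preceding step.

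The only genuinely delicate point is the cardinal-arithmetic claim that $Q$ remains $\lambda^+$-presentable after the inflation $P\rightsquigarrow P^{(\lambda)}$, which is precisely what the hypothesis $\lambda^+\ge\kappa$ is designed to secure; everything else is then formal. Note that the theory of $\kappa$-flat modules developed earlier in this section plays no direct role in the present $0$-goodness statement; it is needed only for the stronger derived-level assertion of Theorem~\ref{copower-good-projective-generator}.
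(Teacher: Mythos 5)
Your proof is correct, and it takes a genuinely different route from the one in the paper. The paper's (sketched) argument identifies $\sB$ with the category of modules over the monad $\boT_P$ and works element-wise: given an $S$\+module map $g\:B^Y\rarrow C^Y$ with $|Y|=\lambda$, it first uses the idempotents $s_y\in S$ acting as $\delta_B\circ\pr_{y,B}$ to show $g=f^Y$ for a map of sets $f\:B\rarrow C$, and then uses the elements $s_t\in S$ attached to the $Y$\+ary operations $t\in\boT_P(Y)$ to verify that $f$ respects every operation of the monad (all of which depend on at most $\lambda$~arguments, since $\boT_P$ is $\kappa$\+accessible and $\lambda^+\ge\kappa$). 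You instead run a purely formal argument: reduce to objects $Q^{(X)}$ via presentations and exactness of $\Theta$, observe that the comparison map is tautologically bijective for $Q^{(Z)}\simeq Q$ when $0<|Z|\le\lambda$, and pass to the $\lambda^+$\+filtered colimit $Q^{(X)}=\varinjlim_Z Q^{(Z)}$, which $\Theta$ preserves because $Q$ is $\lambda^+$\+presentable. The same cardinality hypothesis enters both arguments, just in different guises (``all operations have arity~$\le\lambda$'' versus ``$Q$ is $\lambda^+$\+presentable and all $\le\lambda$\+sized subcoproducts of $Q^{(X)}$ are copies of~$Q$''). Your version has the merit of isolating exactly the two structural facts needed --- $\Theta(Q)\simeq S$ as a left $S$\+module and preservation of $\lambda^+$\+filtered colimits --- and of avoiding the monad formalism altogether; the paper's concrete approach makes visible \emph{why} an $S$\+linear map automatically respects the infinitary operations. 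Your closing remark that the $\kappa$\+flat module theory is irrelevant to this $0$\+goodness statement is accurate. It is worth noting that the decomposition $\Theta(Q^{(X)})=\varinjlim_Z\Theta(Q^{(Z)})$ with $\Theta(Q^{(Z)})\simeq S$ is precisely the device the paper deploys in the subsequent proof of Theorem~\ref{copower-good-projective-generator}, so your argument in effect imports that observation one theorem earlier and lets it do all the work.
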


\begin{proof}[Sketch of proof]
 Let us identify $\sB$ with the category of modules over
the $\kappa$\+accessible monad $\boT_P\:X\longmapsto
\Hom_\sB(P,P^{(X)})$ on the category of sets.
 Let $Y$ be a set of the cardinality~$\lambda$; then the projective
object $P\in\sB$ corresponds to the free $\boT_P$\+module with one
generator $\boT_P(*)$ and the projective object $Q\in\sB$ corresponds
to the free $\boT_P$\+module with $\lambda$~generators $\boT_P(Y)$.
 The functor $\Theta\:\boT_P\modl\rarrow S\modl$ assigns to
a $\boT_P$\+module $B$ the $S$\+module $\Theta(B)=
\Hom_{\boT_P}(\boT_P(Y),B)=B^Y$.

 Let $B$ and $C$ be two $\boT_P$\+modules and $g\:B^Y\rarrow C^Y$ be
an $S$\+module morphism.
 Given an element $y\in Y$, denote by $\delta_B\:B\rarrow B^Y$
the diagonal embedding and by $\pr_{y,B}\:B^Y\rarrow B$ the projection
onto the component indexed by~$y$.
 Then the composition $\delta_B\pr_{y,B}\:B^Y\rarrow B^Y$ is equal to
the action of the element $s_y\in S$ corresponding to the composition
of the natural morphism $P^{(Y)}\rarrow P$ with the coproduct injection
of the $y$\+indexed component $P\rarrow P^{(Y)}$.
 Thus the morphism~$g$ forms a commutative diagram with the maps
$\delta_B\pr_{y,B}$ and~$\delta_C\pr_{y,C}$,
$$
\begin{diagram}
\node{B^Y}\arrow{s,l}{g}\arrow{e,t}{\pr_{y,B}}
\node{B}\arrow{e,t}{\delta_B}\node{B^Y}\arrow{s,l}{g} \\
\node{C^Y}\arrow{e,t}{\pr_{y,C}}
\node{C}\arrow{e,t}{\delta_C}\node{C^Y}
\end{diagram}
$$
 As this holds for all elements $y\in Y$, it follows easily that
there is a map $f\:B\rarrow C$ such that $g=f^Y$.

 Now, for any $Y$\+ary operation $t\in\boT_P(Y)$ in the monad
$\boT_P$, the composition of the natural morphism $P^{(Y)}\rarrow P$
with the morphism $t\:P\rarrow P^{(Y)}$ defines an element
$s_t\in S$.
 The action of~$s_t$ in $B^Y$ is equal to the composition of
the $Y$\+ary operation $t_{\boT_P}(B)\:B^Y\rarrow B$ in $B$ (see
Section~\ref{additive-monads-on-sets-secn}) with
the diagonal embedding $\delta_B\:B\rarrow B^Y$.
 Commutativity of the diagram
$$
\begin{diagram}
\node{B^Y}\arrow{s,l}{f^Y}\arrow{e,t}{t_{\boT_P}(B)}
\node{B}\arrow{e,t}{\delta_B}\node{B^Y}\arrow{s,l}{f^Y} \\
\node{C^Y}\arrow{e,t}{t_{\boT_P}(C)}
\node{C}\arrow{e,t}{\delta_C}\node{C^Y}
\end{diagram}
$$
means that the map $f\:B\rarrow C$ preserves the operation
$t_{\boT_P}$ in $B$ and~$C$.
 As this holds for all $t\in\boT_P(Y)$ and all operations in
the monad $\boT_P$ depend essentially on at most~$\lambda$
arguments, it follows that $f$~is a morphism of $\boT_P$\+modules. 
\end{proof}

 The argument deducing Theorem~\ref{copower-good-projective-generator}
from Theorem~\ref{copower-0-good-PS} is based on a technique summarized
in the following proposition.

 Let $\sA$ be an abelian category with enough projective objects and
$\sB\subset\sA$ be a full subcategory closed under the kernels and
cokernels in~$\sA$; so $\sB$ is also an abelian category and
the embedding functor $\sB\rarrow\sA$ is exact.
 Assume that there exists a functor $\Delta\:\sA\rarrow\sB$ left adjoint
to the fully faithful embedding functor $\sB\rarrow\sA$.
 Then the functor $\Delta$ takes projective objects in $\sA$ to
projective objects in $\sB$, and it follows easily that there are
enough projectives in~$\sB$ (cf.
Lemma~\ref{reflective-projective-generator}).
 Let $\boL_n\Delta\:\sA\rarrow\sB$, \ $n\ge0$, denote the left derived
functor of the right exact functor~$\Delta$.

\begin{prop} \label{fully-faithful-on-bounded-above-derived}
 The following four conditions are equivalent: \par
\textup{(a)} $\boL_n\Delta(P)=0$ for every object $P\in\sB_\proj
\subset\sA$ and all $n\ge1$; \par
\textup{(b)} $\boL_n\Delta(B)=0$ for every object $B\in\sB
\subset\sA$ and all $n\ge1$; \par
\textup{(c)} the triangulated functor\/ $\sD^-(\sB)\rarrow\sD^-(\sA)$
induced by the exact embedding functor\/ $\sB\rarrow\sA$ is fully
faithful; \par
\textup{(d)} the triangulated functor\/ $\sD^\b(\sB)\rarrow\sD^\b(\sA)$
induced by the exact embedding functor\/ $\sB\rarrow\sA$ is fully
faithful.
\end{prop}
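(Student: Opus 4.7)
The plan is to prove the cycle (a)$\Leftrightarrow$(b)$\Rightarrow$(c)$\Rightarrow$(d)$\Rightarrow$(a). The implication (b)$\Rightarrow$(a) is trivial since $\sB_\proj\subset\sB$. For (a)$\Rightarrow$(b), take $B\in\sB$ and a projective resolution $P_\bullet\to B$ in $\sB$. Because the embedding $\iota\:\sB\to\sA$ is exact, $P_\bullet\to B$ remains a resolution in $\sA$, and (a) asserts that each $P_i$ is $\Delta$-acyclic as an object of $\sA$. The standard criterion for computing left derived functors via acyclic resolutions then gives $\boL_n\Delta(B)\cong H_n(\Delta P_\bullet)$. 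Since $\Delta\iota$ is the identity on $\sB$ (because $\iota$ is fully faithful and $\Delta$ is its left adjoint), one has $\Delta P_\bullet=P_\bullet$, whose homology is $B$ in degree $0$ and $0$ elsewhere.

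For (b)$\Rightarrow$(c), I would use the derived adjunction $\boL\Delta\dashv\iota$ on bounded above derived categories, the right adjoint $\iota$ being exact and therefore its own derived functor. Full-and-faithfulness of $\iota\:\sD^-(\sB)\to\sD^-(\sA)$ is equivalent to the counit $\boL\Delta\iota\to\Id_{\sD^-(\sB)}$ being an isomorphism. For any $B^\bullet\in\sD^-(\sB)$, choose a quasi-isomorphism $P^\bullet\to B^\bullet$ from a bounded above complex of $\sB$-projectives; by (a) these are $\Delta$-acyclic in $\sA$, so a bounded above complex of them computes $\boL\Delta$, giving $\boL\Delta(\iota P^\bullet)\cong\Delta(\iota P^\bullet)=P^\bullet\cong B^\bullet$. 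The implication (c)$\Rightarrow$(d) is immediate by restriction, since the functor $\sD^-(\sB)\to\sD^-(\sA)$ sends bounded complexes to bounded ones.

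The main obstacle is (d)$\Rightarrow$(a), where no direct comparison of resolutions is available. Observe first that (d) implies $\Ext^n_\sB(B,C)\cong\Ext^n_\sA(B,C)$ for all $B$, $C\in\sB$ and $n\ge0$, so for $P\in\sB_\proj$ and $C\in\sB$ we have $\Ext^n_\sA(P,C)=0$ for $n\ge1$. Proceed by induction on $n\ge1$ to show $\boL_n\Delta(P)=0$. Set $L=\boL\Delta(P)\in\sD^-(\sB)$; then $H^0(L)=P$ and $H^{-k}(L)=\boL_k\Delta(P)$ for $k\ge1$, and the inductive hypothesis forces the only possibly nonzero cohomologies in degrees $[-n,0]$ to be $H^0(L)=P$ and $H^{-n}(L)=M:=\boL_n\Delta(P)$. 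The derived adjunction supplies $\Hom_{\sD(\sB)}(L,C[n])\cong\Hom_{\sD(\sA)}(P,C[n])=\Ext^n_\sA(P,C)=0$. On the other hand, the truncation triangle $\tau_{\le-n-1}L\to L\to\tau_{\ge-n}L$ together with the t-structure orthogonality $\Hom_{\sD(\sB)}(\sD^{\le-n-1},\sD^{\ge-n}[0])=0$ (applied to both $\tau_{\le-n-1}L$ and its shift) shows that $\Hom_{\sD(\sB)}(L,C[n])\cong\Hom_{\sD(\sB)}(\tau_{\ge-n}L,C[n])$. Finally, the triangle $M[n]\to\tau_{\ge-n}L\to P\to M[n+1]$, combined with the vanishing $\Ext^k_\sB(P,C)=0$ for $k\ge1$ (as $P$ is $\sB$-projective), reduces this last group to $\Hom_\sB(M,C)$. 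Hence $\Hom_\sB(\boL_n\Delta(P),C)=0$ for every $C\in\sB$; taking $C=\boL_n\Delta(P)$ forces $\boL_n\Delta(P)=0$, completing the induction.
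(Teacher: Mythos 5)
Your proof is correct. Three of your four implications follow the paper's own route: (a)$\Leftrightarrow$(b) via applying $\Delta$ to a $\sB$\+projective resolution and using $\Delta\circ\iota\cong\Id_\sB$; (b)$\Rightarrow$(c) by computing the counit of $\boL\Delta\dashv\iota$ on complexes of $\Delta$\+acyclic objects (the paper packages the acyclics into an exact subcategory $\sA_{\Delta\adj}$ with $\sD^-(\sA_{\Delta\adj})\simeq\sD^-(\sA)$, but the content is the same); and (c)$\Rightarrow$(d) is immediate in both. Where you genuinely diverge is the last leg: the paper proves (d)$\Rightarrow$(b) directly for all $B\in\sB$ by observing that the composite $\sD^{\b,\ge-m}(\sB)\rarrow\sD^-(\sA)$ is fully faithful with left adjoint $\tau^{\ge-m}\circ\boL\Delta$, so the adjunction morphism $B\rarrow\tau^{\ge-m}\boL\Delta(B)$ is an isomorphism for every~$m$. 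You instead prove (d)$\Rightarrow$(a) by induction on~$n$, using the adjunction identity $\Hom_{\sD(\sB)}(\boL\Delta(P),C[n])\cong\Ext^n_\sA(P,C)=0$ together with truncation triangles and the vanishing $\Ext^{\ge1}_\sB(P,C)=0$ to reduce to $\Hom_\sB(\boL_n\Delta(P),C)=0$ for all $C$, and then take $C=\boL_n\Delta(P)$. Your version is more hands-on and extracts one derived-functor degree at a time; it needs the projectivity of $P$ in $\sB$ (and hence the detour through (a)$\Rightarrow$(b)), and in fact only uses the weaker consequence of (d) that $\Theta$ is an isomorphism on $\Ext^*(P,C)$ for $P\in\sB_\proj$. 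The paper's version is slicker and lands on (b) in one stroke, but both arguments are sound and rest on the same derived adjunction.
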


\begin{proof}
 This is an infinite homological dimension version
of~\cite[Theorem~6.4]{PMat} (see also~\cite[proofs of Theorems~1.3
and~2.9]{Pmgm}).

 (a)~$\Longleftrightarrow$~(b):
 The composition $\sB\rarrow\sA\rarrow\sB$ of the embedding
$\sB\rarrow\sA$ with its left adjoint functor $\Delta\:\sA\rarrow
\sB$ is the identity functor $\Id_{\sB}$, since the embedding functor
$\sB\rarrow\sA$ is fully faithful.
 Let $P_\bu\rarrow B$ be a left projective resulution of an object
$B\in\sB$.
 In the assumption of~(a), since $\boL_n\Delta(P_i)=0$ for all $i\ge0$
and $n\ge1$, the complex $\Delta(P_\bu)$ computes the derived functor
$\boL_*\Delta(B)$.
 However, we have $\Delta(P_\bu)=P_\bu$, hence $\boL_n\Delta(B)=0$
for $n\ge1$.

 (b)~$\Longleftrightarrow$~(c):
 Denote by $\sA_{\Delta\adj}\subset\sA$ the full subcategory of all
objects $A\in\sA$ such that $\boL_n\Delta(A)=\nobreak0$ for all $n\ge1$.
 Then the full subcategory $\sA_{\Delta\adj}\subset\sA$ is closed under
extensions and the kernels of epimorphisms in~$\sA$; so
$\sA_{\Delta\adj}$ inherits an exact category structure from the abelian
category~$\sA$.
 Furthermore, every object of $\sA$ is the image of an epimorphism
from an object of~$\sA_{\Delta\adj}$.
 It follows that the triangulated functor $\sD^-(\sA_{\Delta\adj})
\rarrow\sD^-(\sA)$ induced by the exact embedding $\sA_{\Delta\adj}
\rarrow\sA$ is a triangulated equivalence (e.~g.,
by~\cite[Proposition~A.3.1(a)]{Pcosh}; or simply because both
the derived categories in question are equivalent to the homotopy
category of bounded above complexes of projective objects in~$\sA$).

 The restriction of the functor $\Delta$ to the exact subcategory
$\sA_{\Delta\adj}\subset\sA$ is an exact functor $\Delta\:
\sA_{\Delta\adj}\rarrow\sB$.
 Applying the functor $\Delta$ to bounded above complexes of objects
from $\sA_{\Delta\adj}$, one constructs a triangulated functor
$$
 \boL\Delta\:\sD^-(\sA)\lrarrow\sD^-(\sB),
$$
which is left adjoint to the triangulated functor $\sD^-(\sB)\rarrow
\sD^-(\sA)$ induced by the embedding $\sB\rarrow\sA$
\cite[Lemma~8.3]{Psemi}.

 Now the functor $\sD^-(\sB)\rarrow\sD^-(\sA)$ is fully faithful if
and only if its composition $\sD^-(\sB)\rarrow\sD^-(\sA)\rarrow
\sD^-(\sB)$ with its left adjoint functor $\boL\Delta\:\sD^-(\sA)\rarrow
\sD^-(\sB)$ is the identity functor $\sD^-(\sB)\rarrow\sD^-(\sB)$.
 In the situation of~(b), we have $\sB\subset\sA_{\Delta\adj}$.
 Since the composition $\sB\rarrow\sA\rarrow\sB$ is the identity
functor, it follows that so is the composition
$\sD^-(\sB)\rarrow\sD^-(\sA)\rarrow\sD^-(\sB)$.

 Conversely, if $\sD^-(\sB)\rarrow\sD^-(\sA)\rarrow\sD^-(\sB)$ is
the identity functor, we obviously have $\boL_n\Delta(B)=0$
for all $B\in\sB$ and $n\ge1$.

 The implication (c)~$\Longrightarrow$~(d) is obvious.

 (d)~$\Longrightarrow$~(b):
 Fix an integer $m\ge0$, and denote by $\sD^{\b,\ge-m}(\sB)\subset
\sD^\b(\sB)$ the full subcategory of bounded complexes with
the cohomology objects concentrated in the cohomological
degrees~$\ge-m$.
 The embedding functor $\sD^{\b,\ge-m}(\sB)\rarrow\sD^-(\sB)$ has
a left adjoint, which is the canonical truncation functor
$\tau^{\ge-m}\:\sD^-(\sB)\rarrow\sD^{\b,\ge-m}(\sB)$ (taking a complex
$\dotsb\rarrow B^{-m-2}\rarrow B^{-m-1}\rarrow B^{-m}\rarrow B^{-m+1}
\rarrow\dotsb$ to the complex $\dotsb\rarrow0\rarrow0\rarrow
B^{-m}/\im B^{-m-1}\rarrow B^{-m+1}\rarrow\dotsb$).
 Hence the composition of functors $\sD^{\b,\ge-m}(\sB)
\rarrow\sD^-(\sB)\rarrow\sD^-(\sA)$ also has a left adjoint, which
can be computed as the composition of the two left adjoints
$\tau^{\ge-m}\boL\Delta$.

 On the other hand, the same functor $\sD^{\b,\ge-m}(\sB)\rarrow
\sD^-(\sA)$ can be obtained as the composition $\sD^{\b,\ge-m}(\sB)
\rarrow\sD^\b(\sB)\rarrow\sD^\b(\sA)\rarrow\sD^-(\sA)$.
 In the assumption of~(d), the functor $\sD^\b(\sB)\rarrow\sD^\b(\sA)$
induced by $\Theta$ is fully faithful.
 Since the embedding functors $\sD^{\b,\ge-m}(\sB)\rarrow\sD^\b(\sB)$
and $\sD^\b(\sA)\rarrow\sD^-(\sA)$ are fully faithful, too, it follows
that so is the composition $\sD^{\b,\ge-m}(\sB)\rarrow\sD^-(\sA)$.

 Therefore, the composition of the two adjoint functors
$\sD^{\b,\ge-m}(\sB)\rarrow\sD^-(\sA)\rarrow\sD^{\b,\ge-m}(\sB)$ is
isomorphic to the identity functor.
 In other words, it means that the adjunction morphism
$B\rarrow\tau^{\ge-m}\boL\Delta(B)$ is an isomorphism for every
$B\in\sB$.
 Since $m\ge0$ is an arbitrary integer, it follows that
$\boL_n\Delta(B)=0$ for all $n\ge0$.
\end{proof}

\begin{proof}[Proof of Theorem~\ref{copower-good-projective-generator}]
 Denote by $S$ the ring $\Hom_\sB(Q,Q)^\rop$.
 First of all, we already know from
Theorem~\ref{copower-0-good-PS} that the functor $\Theta=\Hom_\sB(Q,{-})
\:\sB\rarrow S\modl$ is fully faithful.
 So $Q$ is a $0$\+good projective generator of~$\sB$.

 To prove that $Q$ is a good projective generator, we have to check
that the triangulated functor $\sD^\b(\sB)\rarrow\sD^\b(S\modl)$
induced by $\Theta$ is fully faithful.
 It was explained in the proof of
Proposition~\ref{fully-faithful-b-implies-minus} that
the exact embedding functor $\Theta\:\sB\rarrow S\modl$ has a left
adjoint functor $\Delta\:S\modl\rarrow\sB$.
 According to Proposition~\ref{fully-faithful-on-bounded-above-derived},
it remains to show that the projective objects of the abelian
full subcategory $\sB\subset S\modl$ are adjusted to~$\Delta$, that is
$\boL_n\Delta(\Theta(P))=0$ for all $P\in\sB_\proj$ and all $n\ge1$.

 As any projective object in $\sB$ is a direct summand of an object of
the form $Q^{(X)}$, where $X$ is some set, it suffices to check that
the left $S$\+module $\Theta(Q^{(X)})$ is adjusted to $\Delta$ for
all sets~$X$.
 Now, the functor $\Theta$ preserves $\lambda^+$\+filtered colimits
(since the object $Q\in\sB$ is $\lambda^+$\+presentable).
 Hence we have $\Theta(Q^{(X)})=\varinjlim_Z\Theta(Q^{(Z)})$,
where the colimit is taken over all the subsets $Z\subset X$ of
cardinality not exceeding~$\lambda$.

 Furthermore, $Q^{(Z)}\simeq Q$ in $\sB$ for all nonempty sets $Z$ of
cardinality~$\le\nobreak\lambda$, so the left $S$\+module
$\Theta(Q^{(Z)})$ is isomorphic to~$S$.
 By Lemma~\ref{kappa-flat-closure-properties}(a), it follows that
the left $S$\+module $\Theta(Q^{(X)})$ is $\lambda^+$\+flat for
all sets~$X$, as a $\lambda^+$\+filtered colimit of free left
$S$\+modules.
 So the functor $\Theta$ takes the projective objects of $\sB$ to
$\lambda^+$\+flat left $S$\+modules (this observation improves upon
the result of~\cite[Lemma~6.13]{PS}, where it was noticed that $\Theta$
takes the projective objects of $\sB$ to flat left $S$\+modules).

 Let us show that all the $\lambda^+$\+flat left $S$\+modules are
adjusted to~$\Delta$, that is $\boL_n\Delta(F)=\nobreak0$ for all
$\lambda^+$\+flat left $S$\+modules $F$ and all $n\ge1$.
 Since the projective left $S$\+modules are $\lambda^+$\+flat and
the class of all $\lambda^+$\+flat left $S$\+modules is closed under
the kernels of surjective morphisms, we only need to check
that the functor $\Delta$ preserves exactness of short exact
sequences of $\lambda^+$\+flat left $S$\+modules.

 By Lemma~\ref{kappa-flat-closure-properties}(b),
every short exact sequence of $\lambda^+$\+flat $S$\+modules
is a $\lambda^+$\+filtered colimit of split short exact sequences.
 It remains to point out that the functor $\Delta$ preserves all
colimits, and that $\lambda^+$\+filtered colimits are exact in $\sB$
(e.~g., because they are preserved by the conservative exact functor
$\Theta$ and exact in $R\modl$; cf.~\cite[Proposition~1.59]{AR}
and~\cite[Definition~2.1]{PR}).
\end{proof}

\Section{Nonabelian Subcategories Defined by Perpendicularity
Conditions} \label{nonabelian-secn}

 The aim of this section is to show what can happen if one relaxes
the conditions in the definitions of right $n$\+perpendicular
subcategories in Section~\ref{perpendicular-subcategories-secn}.
 We start with one positive assertion before proceeding to present
various counterexamples.

\begin{lem} \label{1-perpendicular-cocomplete}
 Let\/ $\sA$ be a locally presentable abelian category and\/
$\sB\subset\sA$ be a right $n$\+perpendicular subcategory
to a set of objects or morphisms in\/~$\sA$, where $n\ge0$.
 Then\/ $\sB$ is an additive category with kernels, cokernels,
infinite direct sums and products.
 The kernels and products in\/ $\sB$ coincide with those
computed in\/ $\sA$, while the cokernels and coproducts in\/ $\sB$
can be obtained by applying the reflector\/ $\Delta\:\sA\rarrow\sB$
to the cokernels and coproducts computed in\/~$\sA$.
\end{lem}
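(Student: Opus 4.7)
The plan is to reduce the statement to standard properties of reflective subcategories of locally presentable categories, using the closure results already established in Lemmas~\ref{n+1-perpendicular-implies-n} and~\ref{right-perpendicular-closure-properties}.

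\textbf{Step 1 (reduction to the $0$-perpendicular case).} I would first observe that, by Lemma~\ref{n+1-perpendicular-implies-n}(b), any right $n$-perpendicular subcategory to a set of objects in $\sA$ (for $n \ge 1$) is at the same time a right $0$-perpendicular subcategory to a set of morphisms in $\sA$. Hence in all cases $n \ge 0$ we may assume $\sB = \sF^{\perp_0}$ for some set of morphisms $\sF$ in $\sA$, and, choosing a sufficiently large regular cardinal $\kappa$, we may moreover assume that the domains and codomains of all morphisms in $\sF$ are $\kappa$-presentable.

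\textbf{Step 2 (kernels and products).} By Lemma~\ref{right-perpendicular-closure-properties}(a), the right $0$-perpendicular subcategory $\sB \subset \sA$ is closed under arbitrary limits in $\sA$. In particular, $\sB$ is closed under kernels and under infinite products, and these are computed in $\sB$ in the same way as in $\sA$. Since $\sA$ is additive and $\sB$ contains the zero object (as the empty product) and is closed under finite products, $\sB$ inherits an additive category structure from $\sA$ with $\Hom$-groups and biproducts inherited from~$\sA$.

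\textbf{Step 3 (reflector, cokernels, and coproducts).} Since $\sA$ is locally presentable and $\sF$ consists of morphisms between $\kappa$-presentable objects, the full subcategory $\sB \subset \sA$ is $\kappa$-accessibly embedded; combined with closure under limits, \cite[Theorem and Corollary~2.48]{AR} (as used in the proof of Lemma~\ref{perpendicular-to-set-loc-pres-and-reflective}) implies that $\sB$ is locally $\kappa$-presentable and reflective in $\sA$. Denote the reflector by $\Delta \: \sA \rarrow \sB$. Then $\sB$ has all small colimits, and by the standard formal property of reflective subcategories (the left adjoint preserves colimits, and on objects of $\sB$ the unit is an isomorphism), any colimit in $\sB$ of a diagram $D$ is obtained by applying $\Delta$ to the colimit of $D$ taken in $\sA$. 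Specializing to cokernels and set-indexed coproducts gives the claim.

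\textbf{Main obstacle.} There is no substantive obstacle here: the statement is essentially a packaging of Lemma~\ref{right-perpendicular-closure-properties}(a) with the reflectivity assertion that already appeared (in the abelian case) as Lemma~\ref{perpendicular-to-set-loc-pres-and-reflective}. The only point deserving some care is that, unlike in Lemma~\ref{perpendicular-to-set-loc-pres-and-reflective}, we do not assume $\sB$ is abelian or exactly embedded; but the argument via \cite[Theorem and Corollary~2.48]{AR} only uses closure under limits and $\kappa$-filtered colimits and goes through verbatim.
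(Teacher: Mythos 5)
Your proposal is correct and follows essentially the same route as the paper's proof: reduce to the $0$\+perpendicular case via Lemma~\ref{n+1-perpendicular-implies-n}(b), get kernels and products from closure under limits (Lemma~\ref{right-perpendicular-closure-properties}(a)), and get cokernels and coproducts by establishing reflectivity and local presentability of $\sB$ (the paper cites \cite[Theorem~1.39]{AR} where you cite \cite[Theorem and Corollary~2.48]{AR}, but both are invoked in the proof of Lemma~\ref{perpendicular-to-set-loc-pres-and-reflective} and the argument is the same) together with the fact that the reflector preserves colimits.
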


\begin{proof}
 In view of Lemma~\ref{n+1-perpendicular-implies-n}(b), it suffices
to consider the case $n=0$.
 Then, by~\cite[Theorem~1.39]{AR} (cf.\ the discussion in the proof of
Lemma~\ref{perpendicular-to-set-loc-pres-and-reflective}),
the category $\sB$ is locally presentable and reflective as a full
subcategory in~$\sA$.
 By the definition and by~\cite[Corollary~1.28]{AR}, any locally
presentable category is complete and cocomplete.
 It remains to observe that the full subcategory $\sB\subset\sA$
is closed under limits
(Lemma~\ref{right-perpendicular-closure-properties}(a)),
while the reflector $\Delta\:\sA\rarrow\sB$, being a left adjoint
functor, preserves colimits.
 Before we finish, let us recall that, when $n\ge2$, the cokernels in
$\sB$ also coincide with those computed in $\sA$ by
Lemma~\ref{right-perpendicular-closure-properties}(c).
\end{proof}

 Our counterexamples are produced by the following construction
inspired by Example~\ref{sheaves-cosheaves-ex}.
 Let $R\rarrow S$ be a homomorphism of associative rings.
 Denote by $\sA$ the abelian category whose objects are triples
$(N,M,f)$, where $N$ is a left $S$\+module, $M$ is a left $R$\+module,
and $f\:N\rarrow M$ is an $R$\+module morphism.
 The morphisms $(N',M',f')\rarrow(N'',M'',f'')$ in $\sA$ are pairs
consisting of an $S$\+module morphism $N'\rarrow N''$ and
an $R$\+module morphism $M'\rarrow M''$ forming a commutative
square with $f'$ and~$f''$.
 One can easily interpret $\sA$ as the category of modules
$A=T\modl$ over an appropriate matrix ring~$T$.
 
 Denote by $E$ the object $(S,0,0)\in\sA$, where $S$ is viewed as
a free let $S$\+module with one generator.

\begin{lem} \label{nonabelian-counterex-ext-computed}
 Let $L=(N,M,f)$ be an object of\/~$\sA$.
 Then the following Ext computations hold: \par
\textup{(a)} $\Hom_\sA(E,L)=\ker(N\to\Hom_R(S,M))$, \par
\textup{(b)} $\Ext_\sA^1(E,L)=\coker(N\to\Hom_R(S,M))$, \par
\textup{(c)} $\Ext_\sA^i(E,L)=\Ext_R^{i-1}(S,M)$ for $i\ge2$, \par
\noindent where the $S$\+module morphism $N\rarrow\Hom_R(S,M)$ is
induced by the $R$\+module morphism $N\rarrow M$.
\end{lem}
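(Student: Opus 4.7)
The plan is to build an explicit projective resolution of $E$ in $\sA$ and then read off the $\Ext$ groups from the resulting complex. First I observe that $\sA$ has two obvious ``rank one'' projectives obtained by freely filling in each coordinate: the object $P=(S,S,\id_S)$, corepresenting the functor $(N,M,f)\mapsto N$, and the object $Q=(0,R,0)$, corepresenting $(N,M,f)\mapsto M$. Indeed, a morphism $P\to(N,M,f)$ consists of an $S$\+linear map $S\to N$ (an element $n\in N$) together with an $R$\+linear map $S\to M$ matching via~$f$; since the compatibility forces the latter to be $s\mapsto f(sn)$, one has $\Hom_\sA(P,L)=N$, and similarly $\Hom_\sA(Q,L)=M$.

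Next I would note the short exact sequence in~$\sA$
$$
 0\lrarrow(0,S,0)\lrarrow(S,S,\id_S)\lrarrow(S,0,0)\lrarrow0,
$$
which realizes $E$ as a quotient of~$P$ with kernel $(0,S,0)$. Now pick any free resolution
$\cdots\to R^{(I_1)}\to R^{(I_0)}\to S\to0$ of the left $R$\+module~$S$, apply the exact embedding $M\mapsto(0,M,0)\colon R\modl\to\sA$ (whose essential image consists of projectives when $M$ is $R$\+projective), and splice to obtain a projective resolution of~$E$:
$$
 \cdots\lrarrow(0,R^{(I_1)},0)\lrarrow(0,R^{(I_0)},0)\lrarrow(S,S,\id_S)\lrarrow E\lrarrow0.
$$

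Applying $\Hom_\sA(-,L)$ yields the complex
$$
 N\xrightarrow{\ \alpha\ }\Hom_R(R^{(I_0)},M)\lrarrow\Hom_R(R^{(I_1)},M)\lrarrow\cdots,
$$
and the whole tail from degree~$0$ onwards is canonically identified with $\Hom_R(F_\bu,M)$, where $F_\bu\to S$ is the chosen $R$\+projective resolution. The key point is to trace through the definitions to see that $\alpha$ factors as $N\xrightarrow{\beta}\Hom_R(S,M)\hookrightarrow\Hom_R(R^{(I_0)},M)$, where $\beta$ is precisely the $S$\+module map $n\mapsto(s\mapsto f(sn))$ appearing in the statement; this identifies the image of $\beta$ with the image of $\alpha$ under the inclusion $\Hom_R(S,M)\hookrightarrow\Hom_R(F_0,M)$, and the kernel of $\alpha$ with the kernel of~$\beta$ (since the inclusion is injective).

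From this, part~(a) follows since $\Hom_\sA(E,L)=\ker\alpha=\ker\beta$, and part~(c) follows immediately since $\Ext_\sA^i(E,L)$ for $i\ge2$ is computed as the $(i-1)$\+th cohomology of $\Hom_R(F_\bu,M)$, namely $\Ext_R^{i-1}(S,M)$. For part~(b), the kernel of $\Hom_R(F_0,M)\to\Hom_R(F_1,M)$ equals $\Hom_R(S,M)$ by left exactness of $\Hom_R(-,M)$ applied to $F_1\to F_0\to S\to0$, so $\Ext_\sA^1(E,L)=\Hom_R(S,M)/\im\beta=\coker\beta$. The only slightly delicate step is the bookkeeping that correctly identifies the connecting map~$\alpha$ with the prescribed $\beta$, and I would handle this by chasing through the morphism $(0,F_0,0)\to(S,S,\id_S)$ induced by the composite $F_0\twoheadrightarrow S\hookrightarrow S$ on the second coordinate.
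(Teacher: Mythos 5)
Your proposal is correct and follows essentially the same route as the paper: the same one-step presentation $0\to(0,S,0)\to(S,S,\id_S)\to E\to0$ spliced with an $R$\+projective resolution of $S$ placed in the second coordinate, followed by applying $\Hom_\sA({-},L)$ and identifying the first differential with the map $N\to\Hom_R(S,M)$. You merely spell out in more detail the identification $\Hom_\sA((S,S,\id_S),L)=N$ and the factorization of $\alpha$ through $\Hom_R(S,M)$, which the paper leaves implicit.
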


\begin{proof}
 There is a short exact sequence
$$
 0\lrarrow (0,S,0)\lrarrow(S,S,\id_S)\lrarrow(S,0,0)\lrarrow0
$$
in the category~$\sA$, with a projective object $(S,S,\id_S)\in\sA$.
 Computing $\Hom_\sA(E,L)$ and $\Ext^1_\sA(E,L)$ in terms of this
one-step resolution of the object $E$ produces the natural
isomorphisms~(a\+b).
 Furthermore, let $\dotsb\rarrow P_1\rarrow P_0\rarrow S\rarrow 0$ be
a projective resolution of the left $R$\+module~$S$.
 Then the following is a projective resolution of the object $E\in\sA$:
$$
 \dotsb\lrarrow(0,P_1,0)\lrarrow(0,P_0,0)\lrarrow (S,S,\id_S)
 \lrarrow (S,0,0)\lrarrow0.
$$
 Computing $\Ext^i_\sA(E,L)$ in terms of this resolution produces
the natural isomorphism~(c) as well.
\end{proof}

 Given a left $R$\+module $G$ and an integer $n\ge0$, we denote by
$G^{\perp_{1..n}}\subset R\modl$ the full subcategory in $R\modl$ formed by
all the $R$\+modules $M$ such that $\Ext_R^i(G,M)=0$ for all
$1\le i\le n$.
 So, in particular, we have $G^{\perp_{1..0}}=R\modl$.
 Similarly, we denote by $E^{\perp_{0..n}}\subset\sA$ the full subcategory
formed by all the objects $L\in\sA$ such that $\Ext_\sA^i(E,L)=0$
for all $0\le i\le n$.
 For $n=\infty$, the notation $G^{\perp_{1..\infty}}\subset R\modl$ and
$E^{\perp_{0..\infty}}\subset\sA$ also has the obvious meaning.

\begin{cor} \label{nonabelian-counterex-cor}
 The functor assigning to an $R$\+module $M$ the object
$(\Hom_R(S,M),\allowbreak M,f_M)\in\sA$, where $f_M\:\Hom_R(S,M)
\rarrow M$ is the $R$\+module morphism induced by the ring homomorphism
$R\rarrow S$, provides an equivalence between the full subcategory
$E^{\perp_{0..1}}\subset\sA$ and the abelian category $R\modl$.
 For every $n\ge1$, this equivalence identifies the full subcategories
$E^{\perp_{0..n+1}}\subset\sA$ and $S^{\perp_{1..n}}\subset R\modl$ (where $S$
is viewed as a left $R$\+module), providing a category equivalence
between them.
\end{cor}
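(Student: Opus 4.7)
\medskip
\noindent\textbf{Proof proposal.}
 The plan is to read off everything from Lemma~\ref{nonabelian-counterex-ext-computed} and then verify full-and-faithfulness by a short computation with the $S$\+linearity of morphisms in $\sA$.

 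First, I would translate the defining conditions of the subcategories on the $\sA$\+side. By Lemma~\ref{nonabelian-counterex-ext-computed}(a\+b), an object $L=(N,M,f)\in\sA$ satisfies $\Hom_\sA(E,L)=0=\Ext^1_\sA(E,L)$ if and only if the induced morphism $N\rarrow\Hom_R(S,M)$ is an isomorphism. For the functor $\Phi\:M\longmapsto(\Hom_R(S,M),M,f_M)$, this induced morphism is by construction the identity of $\Hom_R(S,M)$, so $\Phi(M)\in E^{\perp_{0..1}}$. Conversely, given any $L=(N,M,f)\in E^{\perp_{0..1}}$, the isomorphism $N\overset{\sim}\rarrow\Hom_R(S,M)$ together with $\id_M$ provides a canonical isomorphism $L\simeq\Phi(M)$ in $\sA$, giving essential surjectivity.

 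Next, full-and-faithfulness. A morphism $\Phi(M)\rarrow\Phi(M')$ in $\sA$ is a pair $(\phi,g)$ where $\phi\:\Hom_R(S,M)\rarrow\Hom_R(S,M')$ is $S$\+linear, $g\:M\rarrow M'$ is $R$\+linear, and $f_{M'}\phi=gf_M$. The key observation is that this data is redundant: for any $\alpha\in\Hom_R(S,M)$ and $s\in S$, one has $\phi(\alpha)(s)=f_{M'}(s\cdot\phi(\alpha))=f_{M'}(\phi(s\cdot\alpha))=g(f_M(s\cdot\alpha))=g(\alpha(s))$, using in order the definition of $f_{M'}$ in terms of evaluation at~$1$, the $S$\+linearity of~$\phi$, the commutative square, and the definition of $f_M$. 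Hence $\phi=\Hom_R(S,g)$ is forced, and the forgetful assignment $(\phi,g)\longmapsto g$ yields a bijection $\Hom_\sA(\Phi(M),\Phi(M'))\simeq\Hom_R(M,M')$, inverse to the functor $\Phi$ on morphism sets. This is the step I expect to be the main (though still short) obstacle, since one has to pin down the $S$\+action on $\Hom_R(S,M)$ and use it to recover $\phi$ from its composition with evaluation at~$1$.

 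Finally, with the equivalence $R\modl\simeq E^{\perp_{0..1}}$ established, the identification $E^{\perp_{0..n+1}}\simeq S^{\perp_{1..n}}$ for $n\ge1$ is immediate from Lemma~\ref{nonabelian-counterex-ext-computed}(c): for $L\simeq\Phi(M)$ already in $E^{\perp_{0..1}}$, one has $\Ext^i_\sA(E,L)=\Ext^{i-1}_R(S,M)$ for all $i\ge2$, so the vanishing of $\Ext^i_\sA(E,L)$ for $2\le i\le n+1$ is equivalent to the vanishing of $\Ext^j_R(S,M)$ for $1\le j\le n$. The case $n=\infty$ is handled by the same argument with no change.
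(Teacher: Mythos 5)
Your proposal is correct and follows exactly the route the paper takes: the paper's own proof is just the two-line observation that the first assertion follows from Lemma~\ref{nonabelian-counterex-ext-computed}(a\+b) and the second from part~(c), and you have simply written out the routine verifications (that the comparison map for $\Phi(M)$ is the identity, essential surjectivity, and the recovery of $\phi$ from $g$ via $S$\+linearity and evaluation at~$1$) that the paper leaves implicit. All of these checks are accurate.
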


\begin{proof}
 The first assertion follows from
Lemma~\ref{nonabelian-counterex-ext-computed}(a\+b),
and the second one from
Lemma~\ref{nonabelian-counterex-ext-computed}(c).
\end{proof}

\begin{ex} \label{nonabelian-counterex-general}
 Let $R$ be an associative ring and $G$ be a left $R$\+module admitting
a right $R$\+module structure which makes it an $R$\+$R$\+module.
 Set $S=R\oplus G$ to be the trivial ring extension of $R$ by $G$
(so the product of any two elements of $G$ is zero in $S$, while
the products of two elements of $R$ and of one element from $R$
and one element from $G$ in $S$ are defined using the ring structure
on $R$ and the $R$\+$R$\+bimodule structure on~$G$).
 Then the above construction provides an associative ring $T$ with
a left $T$\+module $E$ such that, by
Corollary~\ref{nonabelian-counterex-cor}, the full subcategory
$E^{\perp_{0..1}}\subset T\modl$ is equivalent to $R\modl$, and this
equivalence restricts to equivalences between the full subcategories
$E^{\perp_{0..n+1}}\subset T\modl$ and $G^{\perp_{1..n}}\subset R\modl$
for all $n\ge1$.

 Notice that, by Lemma~\ref{nonabelian-counterex-ext-computed},
the projective dimensions of the left $R$\+module $G$ and the left
$T$\+module $E$ are related by the rule $\pd_TE=\pd_RG+1$.
\end{ex}

\begin{exs}
 (1)~In the context of Example~\ref{nonabelian-counterex-general},
let $R$ be a commutative ring and set $G=\bigoplus_{s\in R}R[s^{-1}]$.
 Then $\pd_RG\le1$ and the orthogonal class $G^{\perp_{1..1}}=
G^{\perp_{1..\infty}}\subset R\modl$ is called the class of all
\emph{contraadjusted} $R$\+modules~\cite[Section~1.1]{Pcosh},
\cite[Sections~2 and~8]{Pcta}.
 The full subcategory $G^{\perp_{1..1}}\subset R\modl$ is equivalent
to the category of all contraherent cosheaves on $\Spec R$
(cf.\ Example~\ref{sheaves-cosheaves-ex}).

 Set $R=\boZ$ to be the ring on integers, and consider the abelian
group $A=\boZ_p^{(\omega)}$, that is, the direct sum of a countable
family of copies of the group of $p$\+adic integers (where $p$~is
a fixed prime number).
 Set $B=\mathbb Q\ot_\boZ A$ and $C=\mathbb Q/\boZ\ot_\boZ A=
\coker(A\to B)$.
 Being divisible (\,$=$~injective) abelian groups, $B$ and $C$ are
obviously contraadjusted.
 We claim that the surjective morphism $B\rarrow C$ has
\emph{no} kernel in the full subcategory of contraadjusted abelian
groups $G^{\perp_{1..1}}\subset\boZ\modl$.

 Indeed, suppose that $K\rarrow B$ is such a kernel.
 Since the class of contraadjusted modules is closed under
the passages to arbitrary quotient modules, the image of the morphism
$K\rarrow B$ is also contraadjusted.
 Hence the morphism $K\rarrow B$ has to be injective, and we can
consider $K$ as a subgroup in~$B$.
 Since the composition $K\rarrow B\rarrow C$ has to vanish, we have
$K\subset A$.
 Now, we have $A=\boZ_p^{(\omega)}$, and every summand $\boZ_p\subset A$
in this direct sum is a contraadjusted abelian group.
 Hence $K$ contains every one of these direct summands, so it follows
that $K=A$.
 However, $A$ is \emph{not} a contraadjusted abelian group, as
$\Ext^1_R(\boZ[p^{-1}],A)\ne0$ \cite[Section~12]{Pcta}.

 Thus we have constructed an associative ring $T$ and
a left $T$\+module $E$ of projective dimension~$2$ such that in
the full subcategory $\sB=E^{\perp_{0..2}}=E^{\perp_{0..\infty}}\subset T\modl$
there are morphisms which have \emph{no kernel}.
 Let us emphasize what this means: not only the full subcategory $\sB$
is not closed under kernels in $T\modl$, but some morphisms in $\sB$
do not even have any kernel internal to~$\sB$.

\smallskip

 (2)~Again in the context of Example~\ref{nonabelian-counterex-general},
let $R$ be a commutative ring and set $G=\bigoplus_I R/I$, where
the direct sum is taken over all the ideals $I\subset R$.
 Then, by Baer's criterion, the orthogonal class $G^{\perp_{1..1}}=
G^{\perp_{1..\infty}}\subset R\modl$ is the class of all injective
$R$\+modules.
 Choose $R$ to be a commutative ring of global dimension~$3$, and let
$A$ be a $R$\+module of projective dimension~$3$.
 Choose exact sequences of $R$\+modules $0\rarrow A\rarrow J\rarrow B
\rarrow 0$ and $0\rarrow B\rarrow H\rarrow C\rarrow0$, where $J$ and
$H$ are injective $R$\+modules.
 We claim that the morphism $J\rarrow H$ has \emph{no} cokernel in
the full subcategory of injective modules $G^{\perp_{1..1}}\subset R\modl$.

 Indeed, suppose that $H\rarrow K$ is such a cokernel.
 Then the composition $J\rarrow H\rarrow K$ vanishes, so we have
an $R$\+module morphism $C\rarrow K$.
 Furthermore, for every injective $R$\+module $K'$ any any $R$\+module
morphism $C\rarrow K'$ there exists a unique $R$\+module morphism
$K\rarrow K'$ making the triangle diagram $C\rarrow K\rarrow K'$
commutative.
 Choosing $K'$ to be an injective $R$\+module such that there is
an injective $R$\+module morphism $C\rarrow K'$, we see that
the morphism $C\rarrow K$ is injective.
 Choosing $K'$ to be an injective $R$\+module such that there is
an injective $R$\+module morphism $K/C\rarrow K'$, we conclude
that the morphism $C\rarrow K$ is an isomorphism.
 Hence $C$ is an injective $R$\+module, which contradicts the assumption
that $\pd_RA=3$.

 Thus we have constructed an associative ring $T$ and
a left $T$\+module $E$ of projective dimension~$4$ such that
$E^{\perp_{0..2}}=E^{\perp_{0..4}}=E^{\perp_{0..\infty}}\subset T\modl$ and
in the full subcategory $\sB=E^{\perp_{0..2}}\subset T\modl$ there are
morphisms which have \emph{no cokernel}.
 As in~(1), this means that not only the full subcategory $\sB$ is
not closed under cokernels in $T\modl$, but some morphisms in $\sB$
do not even have any cokernel internal to~$\sB$.
\end{exs}

\begin{ex}
 This example is a na\"\i ve version of the category of contraherent
cosheaves on the projective line $\mathbb P^1_k$ (where $k$~is a field).
 Consider two polynomial rings $R'=k[x]$ and $R''=k[x^{-1}]$ in
the variables $x$ and~$x^{-1}$, and the ring of Laurent
polynomials $S=k[x,x^{-1}]$ containing $R'$ and $R''$ as subrings.
 Denote by $\sC$ the abelian category whose objects are quintuples
$(N,M',M'',f',f'')$, where $N$ is an $S$\+module, $M'$ is
an $R'$\+module, $M''$ is an $R''$\+module, $f'\:N\rarrow M'$ is
an $R'$\+module morphism, and $f''\:N\rarrow M''$ is
an $R''$\+module morphism.
 Morphisms in the category $\sC$ are defined in the obvious way
(similar to the construction of the category $\sA$ above).
 It is not difficult to construct a matrix ring $U$ such that
$\sC=U\modl$.

 Consider the two objects $E'=(S,0,S,0,\id_S)$ and
$E''=(S,S,0,\id_S,0)\in\sC$, and set $E=E'\oplus E''$.
 Then $E$ is an object of projective dimension~$2$ in $\sC$,
and the full subcategory $E^{\perp_{0..1}}\subset\sC$ is equivalent to
the category of all pairs of modules $M'$ over $R'$ and $M''$
over $R''$ endowed with an isomorphism of $S$\+modules
$\Hom_{R'}(S,M')\simeq\Hom_{R''}(S,M'')$.
 The full subcategory $E^{\perp_{0..2}}\subset E^{\perp_{0..1}}$ is defined
by the additional conditions $\Ext^1_{R'}(S,M')=0$ and
$\Ext^1_{R''}(S,M'')=0$ (cf.\ Example~\ref{sheaves-cosheaves-ex}).

 By Lemma~\ref{1-perpendicular-cocomplete}, the full subcategory
$\sB=E^{\perp_{0..1}}\subset U\modl$ is an additive category with kernels
and cokernels (and, also, infinite direct sums and products).
 Let us show that the category $\sB$ is \emph{not} abelian.
 Indeed, consider the two objects $A=(0,R',R'',0,0)$ and
$C=(k,k,k,\id_k,\id_k)$, where $x$ acts in~$k$ by the identity
operator.
 Then there is a morphism $g\:A\rarrow C$ in $\sB$ whose components
$R'\rarrow k$ and $R''\rarrow k$ take $x^n$ to~$1$ for all $n\in\boZ$.
 Now, the cokernel of~$g$ vanishes, so $C$ is the kernel of
the cokernel of~$g$.
 On the other hand, the kernel of~$g$ is the morphism $h\:A\rarrow A$
with the components $1-x\:R'\rarrow R'$ and $1-x^{-1}\:R''\rarrow R''$.
 The cokernel of the kernel of~$g$ is isomorphic to the direct sum
of two copies of~$C$.
\end{ex}

\bigskip


\begin{thebibliography}{99}
\smallskip

\bibitem{AR}
 J.~Ad\'amek, J.~Rosick\'y.
   Locally presentable  and accessible categories.
London Math.\ Society Lecture Note Series~189,
Cambridge University Press, 1994.

\bibitem{AR2}
 J.~Ad\'amek, J.~Rosick\'y.
   On pure quotients and pure subobjects.
\textit{Czechoslovak Math.\ Journ.}\ \textbf{54 (129)}, \#3,
p.~623--636, 2004.

\bibitem{Ba}
 S.~Bazzoni.
   The $t$-structure induced by an $n$-tilting module.
\textit{Transactions of the Amer.\ Math.\ Soc.}\
\textbf{371}, \#9, p.~6309--6340, 2019.
\texttt{arXiv:1604.00797 [math.RT]}

\bibitem{BMT}
 S.~Bazzoni, F.~Mantese, A.~Tonolo.
   Derived equivalence induced by infinitely generated $n$\+tilting
modules.
\textit{Proceedings of the Amer.\ Math.\ Soc.}\ \textbf{139}, \#12,
p.~4225--4234, 2011. \texttt{arXiv:0905.3696 [math.RA]}

\bibitem{BP2}
 S.~Bazzoni, L.~Positselski.
   Matlis category equivalences for a ring epimorphism.
\textit{Journ.\ of Pure and Appl.\ Algebra} \textbf{224}, \#10,
article ID~106398, 25~pp., 2020.  \texttt{arXiv:1907.04973 [math.RA]}

\bibitem{EP}
 A.~I.~Efimov, L.~Positselski.
   Coherent analogues of matrix factorizations and relative
singularity categories.
\textit{Algebra and Number Theory} \textbf{9}, \#5,
p.~1159--1292, 2015.  \texttt{arXiv:1102.0261 [math.CT]}

\bibitem{Ek}
 P.~C.~Eklof.
   Set-theoretic methods: the uses of Gamma invariants.
In: Abelian groups (Cura\c cao, 1991), \textit{Lecture Notes in
Pure and Appl.\ Math.} \textbf{146}, Dekker, New York, 1993,
p.~43--53.

\bibitem{EM}
 P.~C.~Eklof, A.~H.~Mekler.
   Almost free modules: Set-theoretic methods.
Revised edition.  North-Holland Mathematical Library, 65.
Amsterdam, 2002.

\bibitem{Emm}
 I.~Emmanouil.
   Mittag-Leffler condition and the vanishing of $\varprojlim^1$.
\textit{Topology} \textbf{35}, \#1, p.~267--271, 1996.

\bibitem{EE}
 E.~Enochs, S.~Estrada.
   Relative homological algebra in the category of quasi-coherent
sheaves.
\textit{Advances in Math.} \textbf{194}, \#2, p.~284--295, 2005.

\bibitem{FN}
 A.~Facchini, Z.~Nazemian.
   Equivalence of some homological conditions for ring epimorphisms.
\textit{Journ.\ of Pure and Appl.\ Algebra} \textbf{223}, \#4,
p.~1440--1455, 2019.  \texttt{arXiv:1710.00097 [math.RA]}

\bibitem{FMS}
 L.~Fiorot, F.~Mattiello, M.~Saor\'\i n.
   Derived equivalences induced by nonclassical tilting objects.
\textit{Proceedings of the Amer.\ Math.\ Soc.} \textbf{145}, \#4,
p.~1505--1514, 2017.  \texttt{arXiv:1511.06148 [math.RT]}

\bibitem{GL}
 W.~Geigle, H.~Lenzing.
   Perpendicular categories with applications to representations
and sheaves.
\textit{Journ.\ of Algebra} \textbf{144}, \#2, p.~273--343, 1991.

\bibitem{HAR}
 M.~H\'ebert, J.~Ad\'amek, J.~Rosick\'y.
    More on orthogonality in locally presentable categories.
\textit{Cahiers de topologie et g\'eom\'etrie diff\'erentielle
cat\'egoriques} \textbf{42}, \#1, p.~51--80, 2001.

\bibitem{HR}
 M.~H\'ebert, J.~Rosick\'y.
    Uncountable orthogonality is a closure property.
\textit{Bulletin of the London Math.\ Soc.} \textbf{33}, \#6,
p.~685--688, 2001.

\bibitem{Isb}
 J.~R.~Isbell.
   Adequate subcategories.
\textit{Illinois Journ.\ of Math.}\ \textbf{4}, \#4, p.~541--552, 1960.

\bibitem{Jan}
 U.~Jannsen.
   Continuous \'etale cohomology.
\textit{Mathematische Annalen} \textbf{280}, \#2, p.~207--245, 1988.

\bibitem{JL}
 C.~U.~Jensen, H.~Lenzing.
   Model-theoretic algebra (with particular emphasis on fields,
rings, modules).
Algebra, Logic, and Applications, 2.
 Gordon and Breach Science Publishers, New York, 1989.

\bibitem{Kra}
 H.~Krause.
   A Brown representability theorem via coherent functors.
\textit{Topology} \textbf{41}, \#4, p.~853--861, 2002.

\bibitem{Len}
 H.~Lenzing.
   Homological transfer form finitely presented to infinite modules.
In: Abelian group theory (Honolulu, 1983), \textit{Lecture Notes
in Math.}\ \textbf{1006}, Springer, 1983, p.~734--761.

\bibitem{Mit}
 B.~Mitchell.
   Rings with several objects.
\textit{Advances in Math.}\ \textbf{8}, \#1, p.~1--161, 1972.

\bibitem{PSY}
 M.~Porta, L.~Shaul, A.~Yekutieli.
   On the homology of completion and torsion.
\textit{Algebras and Representation Theory} \textbf{17}, \#1,
p.~31--67, 2014.  \texttt{arXiv:1010.4386 [math.AC]}\,.
Erratum in \textit{Algebras and Representation Theory} \textbf{18},
\#5, p.~1401--1405, 2015.  \texttt{arXiv:1506.07765 [math.AC]}

\bibitem{Psemi}
 L.~Positselski.
   Homological algebra of semimodules and semicontramodules:
Semi-infinite homological algebra of associative algebraic structures.
 Appendix~C in collaboration with D.~Rumynin; Appendix~D in
collaboration with S.~Arkhipov.
 Monografie Matematyczne vol.~70, Birkh\"auser/Springer Basel, 2010. 
xxiv+349~pp. \texttt{arXiv:0708.3398 [math.CT]}

\bibitem{Pweak}
 L.~Positselski.
   Weakly curved A${}_\infty$-algebras over a topological local ring.
\textit{M\'emoires de la Soci\'et\'e Math\'ematique de France}
\textbf{159}, 2018.  vi+206~pp.  \texttt{arXiv:1202.2697 [math.CT]}

\bibitem{Pcosh}
 L.~Positselski.
   Contraherent cosheaves.
Electronic preprint \texttt{arXiv:1209.2995 [math.CT]}.

\bibitem{Prev}
 L.~Positselski.
   Contramodules.
\textit{Confluentes Math.}\ \textbf{13}, \#2, p.~93--182, 2021.
\texttt{arXiv:1503.00991 [math.CT]}

\bibitem{Pmgm}
 L.~Positselski.
   Dedualizing complexes and MGM duality.
\textit{Journ.\ of Pure and Appl.\ Algebra} \textbf{220}, \#12,
p.~3866--3909, 2016.  \texttt{arXiv:1503.05523 [math.CT]}

\bibitem{Pcta}
 L.~Positselski.
   Contraadjusted modules, contramodules, and reduced cotorsion modules.
\textit{Moscow Math.\ Journ.}\ \textbf{17}, \#3, p.~385--455, 2017.
\texttt{arXiv:1605.03934 [math.CT]}

\bibitem{PMat}
 L.~Positselski.
   Triangulated Matlis equivalence.
\textit{Journ.\ of Algebra and its Appl.}\ \textbf{17}, \#4,
article ID~1850067, 2018.  \texttt{arXiv:1605.08018 [math.CT]}

\bibitem{Psm}
 L.~Positselski.
   Smooth duality and co-contra correspondence.
\textit{Journ.\ of Lie Theory} \textbf{30}, \#1, p.~85--144,
2020.  \texttt{arXiv:1609.04597 [math.CT]}

\bibitem{Pcoun}
 L.~Positselski.
   Flat ring epimorphisms of countable type.
\textit{Glasgow Math.\ Journ.} \textbf{62}, \#2, p.~383--439, 2020.
\texttt{arXiv:1808.00937 [math.RA]}

\bibitem{PR}
 L.~Positselski, J.~Rosick\'y.
   Covers, envelopes, and cotorsion theories in locally presentable
abelian categories and contramodule categories.
\textit{Journ.\ of Algebra} \textbf{483}, p.~83--128, 2017.
\texttt{arXiv:1512.08119 [math.CT]}

\bibitem{PSl0}
 L.~Positselski, A.~Sl\'avik.
   Flat morphisms of finite presentation are very flat.
\textit{Annali di Matem.\ Pura ed Appl.}\ \textbf{199}, \#3,
p.~875--924, 2020.  \texttt{arXiv:1708.00846 [math.AC]}
 
\bibitem{PSl}
 L.~Positselski, A.~Sl\'avik.
   On strongly flat and weakly cotorsion modules.
\textit{Math.\ Zeitschrift} \textbf{291}, \#3--4, p.~831--875, 2019.
\texttt{arXiv:1708.06833 [math.AC]}

\bibitem{PS}
 L.~Positselski, J.~\v St\!'ov\'\i\v cek.
   The tilting-cotilting correspondence.
\textit{Internat.\ Math.\ Research Notices} \textbf{2021}, \#1,
p.~189--274, 2021.  \texttt{arXiv:1710.02230 [math.CT]} 

\bibitem{PS2}
 L.~Positselski, J.~\v St\!'ov\'\i\v cek.
   $\infty$-tilting theory.
\textit{Pacific Journ.\ of Math.} \textbf{301}, \#1,
p.~297--334, 2019.  \texttt{arXiv:1711.06169 [math.CT]}

\bibitem{Sch}
 P.~Schenzel.
   Proregular sequences, local cohomology, and completion.
\textit{Math.\ Scand.}\ \textbf{92}, \#2, p.~161--180, 2003.

\bibitem{V1}
 E.~M.~Vitale.
   Localizations of algebraic categories.
\textit{Journ.\ of Pure and Appl.\ Algebra} \textbf{108}, \#3,
p.~315\+-320, 1996.

\bibitem{V2}
 E.~M.~Vitale.
   Localizations of algebraic categories~II.
\textit{Journ.\ of Pure and Appl.\ Algebra} \textbf{133}, \#3,
p.~317\+-326, 1998.

\end{thebibliography}
\end{document}